
\documentclass[final,dvips,usenames]{siamltex}

\usepackage{color}
\usepackage{amssymb,amsmath,algorithm2e,epsfig}
\usepackage{bm}

\usepackage{tikz}
\usetikzlibrary{arrows}
\usetikzlibrary{decorations.markings}



\newtheorem{remark}[theorem]{Remark}

\newcommand{\Aslv}{A$_\mathtt{S}$}
\newcommand{\Aest}{A$_\mathtt{E}$}
\newcommand{\Amrk}{A$_\mathtt{M}$}
\newcommand{\Aref}{A$_\mathtt{R}$}
\renewcommand{\b}[1]{\boldsymbol{#1}}

\newcommand{\cA}{\mathcal{A}}

\newcommand{\cF}{\mathcal{F}}

\newcommand{\CFD}{C_\mathrm{F,D}}

\newcommand{\cM}{\mathcal{M}}
\newcommand{\CP}{C_\mathrm{P}}

\newcommand{\cT}{\mathcal{T}}

\newcommand{\cTstar}{\cT_*}

\newcommand{\ddiv}{\operatorname{div}}
\newcommand{\dx}[1][x]{\,\mathrm{d}#1}
\newcommand{\ellfin}{{\ell_\mathrm{fin}}}
\newcommand{\GammaD}{{\Gamma_{\mathrm{D}}}}
\newcommand{\GammaN}{{\Gamma_{\mathrm{N}}}}
\newcommand{\Hdiv}[1][\Omega]{\boldsymbol{H}(\ddiv,#1)}
\newcommand{\HdivO}[1][\Omega]{\boldsymbol{H}_0(\ddiv,#1)}

\newcommand{\interior}{\operatorname{int}}
\newcommand{\kfin}{{k_\mathrm{fin}}}

\newcommand{\norm}[1]{\|#1\|}

\newcommand{\oGammaD}{\overline\Gamma_{\mathrm{D}}}
\newcommand{\oGammaN}{\overline\Gamma_{\mathrm{N}}}

\newcommand{\R}{\mathbb{R}}


\newcommand{\Erel}{E^\mathrm{rel}} 
\newcommand{\Erelest}{E^\mathrm{rel}_\mathrm{est}} 
\newcommand{\Ereltol}{E^\mathrm{rel}_\mathrm{TOL}} 

\newcommand\SOLVE{\mbox{\texttt{SOLVE}}}
\newcommand\ESTIMATE{\mbox{\texttt{ESTIMATE}}}

\newcommand\MARK{\mbox{\texttt{MARK}}}
\newcommand\REFINE{\mbox{\texttt{REFINE}}}

\newcommand\Ta{\cT_{\boldsymbol{a}}}
\newcommand\poma{\partial\Oma} 

\newcommand\cETK{\mathcal{E}^\cT_K}
\newcommand\cET{\mathcal{E}^\cT}
\newcommand\cETD{\mathcal{E}^\cT_{\mathrm{D}}}
\newcommand\cETI{\mathcal{E}^\cT_{\mathrm{I}}}
\newcommand\cETN{\mathcal{E}^\cT_{\mathrm{N}}}
\newcommand\cEaB{\mathcal{E}^{\mathrm{B}}_{\boldsymbol{a}}}
\newcommand\cEaI{\mathcal{E}^{\mathrm{I}}_{\boldsymbol{a}}}
\newcommand\cEaBN{\mathcal{E}^{\mathrm{B,N}}_{\boldsymbol{a}}}
\newcommand\cEaBD{\mathcal{E}^{\mathrm{B,D}}_{\boldsymbol{a}}}
\newcommand\cEaBE{\mathcal{E}^{\mathrm{B,E}}_{\boldsymbol{a}}}

\newcommand\pt{\partial}

\newcommand\RT{\mathbf{RT}_p}
\newcommand\bW{\mathbf{W}}
\newcommand\bWa{\mathbf{W}_\ta}
\newcommand\bWT{\mathbf{W}^\cT}

\newcommand\PiWK{{\bm \Pi}^{\cA^{-1}}_K}
\newcommand\PiWTa{{\bm \Pi}^{\cA^{-1}}_{\Ta}}

\newcommand\PpTast{P_{p}^{*}(\Ta)}

\newcommand\PpT{P_p(\Ta)}

\newcommand{\ttau}{\bm \tau}
\newcommand{\ttauT}{\bm \tau^{\cT}}
\newcommand{\varphiT}{\varphi^{\cT}}

\newcommand\VT{V^{\cT}}

\newcommand\VTstar{V^{\cTstar}}

\newcommand\NT{\mathcal{N}^{\cT}}
\newcommand\NTI{\mathcal{N}^{\cT}_{\mathrm{I}}}
\newcommand\NTN{\mathcal{N}^{\cT}_{\mathrm{N}}}
\newcommand\NTD{\mathcal{N}^{\cT}_{\mathrm{D}}}
\newcommand\NTK{\mathcal{N}^{\cT}_{K}}

\newcommand\NTGa{\mathcal{N}^{\cT}_{\Gamma}}

\newcommand\omK{\omega_K}

\newcommand\tomK{{\tilde\omega}_K}

\newcommand\bR{\mathcal{R}}

\newcommand{\HF}{\psi_{\boldsymbol{a}}}

\newcommand\tO{{\bm 0}}

\newcommand\bw{{\boldsymbol w}}
\newcommand\bwT{{\boldsymbol w}^{\cT}}
\newcommand\tn{{\boldsymbol n}}
\newcommand\tq{{\boldsymbol q}}
\newcommand\ta{{\boldsymbol a}}

\newcommand\tqT{{\boldsymbol q}^{\cT}}

\newcommand\lamTf{\lambda^{\cT}_i}
\newcommand\lamTkf{\lambda^{\cT_k}_i}

\newcommand\lamT{\lambda^{\cT}}

\newcommand\uTf{u^{\cT}_i}
\newcommand\uTkf{u^{\cT_k}_i}
\newcommand\uT{u^{\cT}}

\newcommand\vT{v^{\cT}}
\newcommand\tqa{{\boldsymbol q}^{\cT}_{\boldsymbol{a}}}
\newcommand\ttqa{{\boldsymbol {\tilde{q}}}^{\cT}_{\boldsymbol{a}}}
\newcommand\tqaO{{\boldsymbol q}^{\cT, 0}_{\boldsymbol{a}}}
\newcommand\da{d^{\cT}_{\boldsymbol{a}}}
\newcommand\daO{d^{\cT, 0}_{\boldsymbol{a}}}
\newcommand\tsa{{\boldsymbol s}^{\cT}_{\boldsymbol{a}}}
\newcommand\gTa{g^\cT_\ta}

\newcommand\Oma{\omega_{\ta}}

\newcommand\rTa{r^{\cT}_{\boldsymbol{a}}}

\newcommand{\bd}{\begin{definition}}
\newcommand{\ed}{\end{definition}}
\newcommand{\be}{\begin{equation}}
\newcommand{\ee}{\end{equation}}
\newcommand{\bse}{\begin{subequations}}
\newcommand{\ese}{\end{subequations}}
\newcommand{\br}{\begin{remark}}
\newcommand{\er}{\end{remark}}
\newcommand{\bc}{\begin{corollary}}
\newcommand{\ec}{\end{corollary}}

\title{Two-sided bounds of eigenvalues -- local efficiency and convergence of adaptive algorithm%
\thanks{The support of I.\ \v{S}ebestov\'a by Fondecyt Postdoctoral Grant no.~3150047 and the support of T.~Vejchodsk\'y by the project no.~P101/14-02067S of the Czech Science Foundation and by RVO~67985840
are gratefully acknowledged.}}

\author{Ivana \v{S}ebestov\'a\footnotemark[2]
        \and Tom\'a\v s Vejchodsk\'y\footnotemark[3]}

\begin{document}

\maketitle

\renewcommand{\thefootnote}{\fnsymbol{footnote}}

\footnotetext[2]{Departamento de Ingenier\'\i a Matem\' atica, Facultad de Ciencias F\'\i sicas y Matem\'aticas,
Universidad de Concepci\' on, Casilla 160-C, Concepci\' on,
Chile, email: ({\tt isebestova@udec.cl}).}
\footnotetext[3]{Institute of Mathematics, Czech Academy of Sciences, {\v Z}itn{\'a} 25, CZ-115 67 Praha 1, Czech Republic, ({\tt vejchod@math.cas.cz}).}

\renewcommand{\thefootnote}{\arabic{footnote}}

\begin{abstract}
We generalize and analyse the method for computing lower bounds of the principal eigenvalue 
proposed in 
our previous paper (I.~\v{S}ebestov\'a, T.~Vejchodsk\'y, SIAM J. Numer. Anal. 2014).
This method is suitable for symmetric elliptic eigenvalue problems with mixed boundary conditions 
of Dirichlet, Neumann, and Robin type and
it is based on a posteriori error analysis using flux reconstructions.
We improve the original result in several aspects.
We show how to obtain lower bounds even for higher eigenvalues.
We present a local approach for the flux reconstruction enabling efficient implementation.
We prove the equivalence of the resulting estimator with the classical residual estimator
and consequently its local efficiency. 
We also prove the convergence of the corresponding adaptive algorithm.
Finally, we illustrate the practical performance of the method by numerical examples.
\end{abstract}

\begin{keywords}
lower bound, upper bound, bounds on spectrum, a posteriori error estimate, flux reconstruction
\end{keywords}

\begin{AMS}
35P15, 35J15, 65N25, 65N30
\end{AMS}

\pagestyle{myheadings}
\thispagestyle{plain}
\markboth{IVANA \v{S}EBESTOV\'A  AND TOM\'A\v{S} VEJCHODSK\'Y}{TWO-SIDED BOUNDS OF EIGENVALUES}


\section{Introduction}

Galerkin method provides a simple and efficient way how to compute approximate eigenvalues and eigenfunctions of differential operators. For symmetric elliptic problems this method naturally yields upper bounds on the exact eigenvalues. Computation of an accurate lower bound is a much more complicated task. It is an old problem and many authors have already approached it from different perspectives.

Concerning recent results, various nonconforming methods to compute the lower bounds \cite{AndRac:2012,HuHuaLin2014,LinXieLuoLiYan:2010,LuoLinXie:2012,Rannacher:1979,YanZhaLin:2010} have been proposed. These approaches provide typically an asymptotic lower bound in the sense that the lower bound is guaranteed only if the corresponding discretization mesh is sufficiently fine. Guaranteed lower bound for the Laplace eigenvalues with homogeneous Dirichlet boundary conditions are obtained even on coarse meshes in \cite{CarGed2014} by using Crouzeix-Raviart nonconforming finite elements. 
A generalization of this approach to a biharmonic operator is provided in \cite{CarGal2014}.
A lower bound on the smallest eigenvalue is obtained in \cite{Repin2012} by a nonoverlapping decomposition of the domain into subdomains, where the exact eigenvalues are known. 
In a sense similar method is proposed in \cite{Kuz_Rep_Guar_low_bound_smal_eig_elip_13}. It is based on an overlapping decomposition of the domain into geometrically simple subdomains and it yields a lower bound on the smallest eigenvalue for homogeneous Neumann or mixed Neumann-Dirichlet boundary conditions. A lower bound on the smallest eigenvalue for a triangle is obtained in \cite{Kobayashi2015} using a scaling. An interesting generalization of the method of eigenvalue inclusions \cite{BehnkeGoerish1994,Plum1997} for the Maxwell operator is provided in \cite{Barrenechea2014}.

In \cite{Seb_Vejch_2sidedb_eigen_Fr_Poin_trace_14} we propose another approach based on a combination of the method of a priori-a posteriori inequalities \cite{Sigillito:1977,KutSig:1978} and a complementarity technique \cite{Complement:2010,systemaee:2010}. The main result of \cite{Seb_Vejch_2sidedb_eigen_Fr_Poin_trace_14} is the description of the method and a proof that it yields a lower bound on the principal eigenvalue. However, the crucial flux reconstruction is obtained there by solving a straightforward but global minimization problem. Here, we improve on this by considering the error estimator with the flux reconstructed locally. This enables an efficient and naturally parallel implementation. We use the local flux reconstruction originally proposed in \cite{BraSch:2008} for source problems and we modify it for eigenvalue problems.

The heart of this paper is the proof of the equivalence of the estimator based on the local flux reconstruction with the classical residual estimator. This result has important consequences such as the local efficiency of the proposed estimator and the convergence of the corresponding adaptive algorithm, which we also prove. In addition, we show how to compute lower bounds for theoretically arbitrary eigenvalue. This is an improvement over \cite{Seb_Vejch_2sidedb_eigen_Fr_Poin_trace_14}, where lower bounds for the principal (smallest) eigenvalue only are considered.

In Section~\ref{se:prob_def}, we define the eigenvalue problem in an abstract way using a pair of symmetric bilinear forms on a Hilbert space. We briefly summarize the key abstract results, emphasize the importance of compactness, and provide a lemma that translates the classical compactness results, such as the Rellich and trace theorems to the required compactness of the solution operator. We also recall an abstract theorem from \cite{Seb_Vejch_2sidedb_eigen_Fr_Poin_trace_14} yielding lower bounds on the principal eigenvalues.
Section~\ref{se:problem} introduces a symmetric elliptic eigenvalue problem with mixed Dirichlet, Neumann, and Robin boundary conditions and its finite element discretization.
Section~\ref{se:classicalest} recalls the classical residual error indicators and their local efficiency with respect to the residual.
Section~\ref{se:errest} defines the error estimator based on the local flux reconstruction.
In Section~\ref{se:loceff} we prove that this estimator is controlled by the classical residual estimator and hence that it is locally efficient as well.
Section~\ref{se:adaptalg} reviews the general assumptions for the convergence of the adaptive algorithm and shows that the adaptive algorithm driven by the proposed error indicators converges. 
Section~\ref{se:numex} presents numerical results and illustrates the practical performance of the method.
Finally, Section~\ref{se:concl} draws conclusions.

\section{Abstract setting}
\label{se:prob_def}

This section briefly describes the general setting of eigenvalue problems based on a pair of bilinear forms in
a Hilbert space. This general setting enables to treat the standard types of eigenvalue problems such as the Dirichlet, Neumann, Steklov, etc. in a unified manner.

Let $V$ be a real Hilbert space with a scalar product $a(u,v)$ for $u,v\in V$. In particular the form $a(u,v)$ is continuous, bilinear, symmetric and positive definite. Further, let a form $b(u,v)$ for $u,v\in V$ be continuous, bilinear, symmetric, and positive semidefinite, i.e. $b(v,v) \geq 0$ for all $v \in V$. We use notation $\norm{v}_a^2 = a(v,v)$ and $|v|_b^2 = b(v,v)$ for the norm induced by the scalar product $a$ and the seminorm induced by the bilinear form $b$, respectively.
We will consider an abstract eigenvalue problem to find an eigenvalue $\lambda_i\in\R$ and a nonzero eigenfunction $u_i \in V$ such that
\begin{equation}
  \label{eq:EP1}
  a(u_i,v) = \lambda_i b(u_i, v) \quad \forall v \in V.
\end{equation}
The positivity of eigenvalues $\lambda_i$ and the positivity of the seminorm $|u_i|_b$ of the corresponding eigenfunctions is easy to show.
\begin{lemma}
\label{le:lambdapos}
Let $u_i \in V$ be an eigenfunction of \eqref{eq:EP1} corresponding to an eigenvalue $\lambda_i\in\R$, then $|u_i|_b > 0$ and $\lambda_i > 0$.
\end{lemma}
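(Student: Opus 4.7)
The plan is to test the eigenvalue equation \eqref{eq:EP1} against the eigenfunction itself, which is the standard trick for this kind of Rayleigh-quotient statement. Setting $v = u_i$ in \eqref{eq:EP1} yields the scalar identity $\|u_i\|_a^2 = \lambda_i |u_i|_b^2$. Since $u_i$ is by assumption a nonzero element of $V$ and $a$ is a scalar product (in particular positive definite), the left-hand side $\|u_i\|_a^2$ is strictly positive.

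From here the two conclusions fall out independently. First, if $|u_i|_b$ were zero, the right-hand side $\lambda_i |u_i|_b^2$ would be zero, contradicting $\|u_i\|_a^2 > 0$; hence $|u_i|_b > 0$. Second, once we know $|u_i|_b^2 > 0$, we can divide to obtain $\lambda_i = \|u_i\|_a^2 / |u_i|_b^2$, a ratio of a strictly positive number and a strictly positive number, so $\lambda_i > 0$.

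There is essentially no obstacle here; the only things one uses are that $a$ is positive definite (so $u_i \neq 0$ gives $\|u_i\|_a > 0$), that $b$ is positive semidefinite (so the quantity $|u_i|_b^2 = b(u_i,u_i)$ is a well-defined nonnegative real), and that $\lambda_i$ is real. No compactness or spectral theory is invoked for this lemma — it is the trivial half of the Rayleigh-quotient story, serving mainly to rule out pathological null eigenfunctions of $b$ and to justify treating $\lambda_i$ as a positive real throughout the remainder of the paper.
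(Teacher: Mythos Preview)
Your proof is correct and follows the same approach as the paper's own proof, which simply notes that the conclusion follows immediately from testing \eqref{eq:EP1} with $v=u_i$ together with $u_i\neq 0$, $a(u_i,u_i)>0$, and $b(u_i,u_i)\geq 0$.
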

\begin{proof}
It follows immediately from \eqref{eq:EP1} and the facts that $u_i \neq 0$, $a(u_i,u_i) > 0$,
and $b(u_i,u_i) \geq 0$.
\end{proof}

In order to verify the well-posedness of eigenproblem \eqref{eq:EP1}, we consider the solution operator $S : V \rightarrow V$. Given $u\in V$, the element $Su \in V$ is defined by identity
\begin{equation}
\label{eq:solop}
  a(Su,v) = b(u,v) \quad \forall v \in V.
\end{equation}
The existence and uniqueness of $Su$ as well as the linearity and continuity of $S$ follow from the Riesz representation theorem.

Having defined $S$, we assume it is compact. This is a crucial assumption and the subsequent analysis relies on it. Further, from the symmetry of both $a$ and $b$ we easily obtain that the operator $S$ is selfadjoint in $V$, i.e. $a(Su,v) = a(u,Sv)$ for all $u,v\in V$.
These properties of $S$ enable to use the Hilbert--Schmidt spectral theorem \cite[Theorem~4, Chapter~II, section~3]{Gaal_Lin_anal_repres_theo_73}. Considering the eigenproblem
\begin{equation}
\label{eq:EP2}
  S u_i = \mu_i u_i,
\end{equation}
the Hilbert--Schmidt spectral theorem implies the existence of
a countable sequence $\{u_i\}$ of eigenfunctions corresponding to nonzero eigenvalues $\mu_i$.
These eigenfunctions are orthogonal, i.e. $a(u_i,u_j) = 0$ for all $i\neq j$, and they generate
a subspace $\mathcal{M}$ with the property
\begin{equation}
  \label{eq:Vsplit}
  V = \mathcal{M} \oplus \operatorname{ker}(S).
\end{equation}
Here, $\oplus$ denotes the direct sum and $\operatorname{ker}(S) = \{ v \in V : Sv = 0 \}$ is the kernel of $S$. In what follows, we will consider eigenfunctions $u_i$ to be normalized as
\begin{equation}
\label{eq:normalization}
  b(u_i,u_j) = \delta_{ij}, \quad \forall i,j=1,2,\dots,
\end{equation}
where we use the Kronecker delta.
Note that this normalization is well defined due to Lemma~\ref{le:lambdapos}.

It is not surprising that eigenproblems \eqref{eq:EP1} and \eqref{eq:EP2} are linked and we can derive properties of eigenproblem \eqref{eq:EP1} from \eqref{eq:EP2}.

\begin{lemma}
\label{le:Sgamma}
Under the above setting, the following statements hold true.
\begin{enumerate}
\item[1.]
Number $\lambda_i \in \R$ is an eigenvalue corresponding
to the eigenfunction $u_i \in V$ of \eqref{eq:EP1}
if and only if
$\mu_i = 1/\lambda_i$ is a nonzero eigenvalue corresponding
to the eigenfunction $u_i$ of the operator $S$; see \eqref{eq:EP2}.
\item[2.]
The number of eigenvalues $\lambda_i$ of \eqref{eq:EP1} such that $\lambda_i \leq M$
is finite for any $M>0$.
\item[3.]
The smallest eigenvalue of \eqref{eq:EP1} is given by
$\lambda_1 = \inf\limits_{u\in V, |u|_b\neq 0}
\norm{u}_a^2/|u|_b^2
$.
\end{enumerate}
\end{lemma}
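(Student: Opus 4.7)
The proof decomposes into three parts matching the three statements, all following from the spectral theory of $S$ recalled just before the lemma.

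Part~1 is an algebraic translation: using \eqref{eq:solop}, the identity $a(u_i,v)=\lambda_i b(u_i,v)=\lambda_i a(Su_i,v)$ holds for every $v\in V$. Non-degeneracy of the scalar product $a$ then forces $u_i=\lambda_i Su_i$, so $Su_i=\mu_i u_i$ with $\mu_i=1/\lambda_i$, well defined because Lemma~\ref{le:lambdapos} gives $\lambda_i>0$. The converse implication is symmetric, dividing by $\mu_i\neq 0$. Part~2 is then immediate: the Hilbert--Schmidt theorem guarantees that the nonzero eigenvalues $\mu_i$ of the compact self-adjoint operator $S$ can only accumulate at $0$, so by Part~1 their reciprocals $\lambda_i$ can only accumulate at $+\infty$, and at most finitely many fall in any interval $(0,M]$. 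In particular the eigenvalues of \eqref{eq:EP1} admit a non-decreasing enumeration $\lambda_1\le\lambda_2\le\dots$ and a smallest one exists.

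For Part~3, given arbitrary $u\in V$ with $|u|_b\neq 0$, I would use the decomposition \eqref{eq:Vsplit} to write $u=w+z$ with $w\in\mathcal{M}$ and $z\in\operatorname{ker}(S)$, and establish three ingredients. First, $Sz=0$ yields $b(z,v)=a(Sz,v)=0$ for every $v\in V$, so $|z|_b=0$ and $b(w,z)=0$. Second, the self-adjointness of $S$ gives $a(u_i,z)=\lambda_i a(Su_i,z)=\lambda_i a(u_i,Sz)=0$, so $\mathcal{M}$ and $\operatorname{ker}(S)$ are $a$-orthogonal. Third, Part~1 combined with the normalization \eqref{eq:normalization} gives $a(u_i,u_j)=\lambda_i\delta_{ij}$. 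Expanding $w=\sum_i c_i u_i$ in the $a$-orthogonal basis $\{u_i\}$ of $\mathcal{M}$ and passing to the limit by continuity of $a$ and $b$ produces
\[
 |u|_b^2=\sum_i c_i^2,\qquad \norm{u}_a^2=\norm{z}_a^2+\sum_i \lambda_i c_i^2\ge\lambda_1\sum_i c_i^2=\lambda_1|u|_b^2.
\]
Thus $\norm{u}_a^2/|u|_b^2\ge\lambda_1$, with equality attained at $u=u_1$, giving the claimed infimum.

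The only nontrivial technicality lies in Part~3, where the series manipulations for $w$ must be justified when $\mathcal{M}$ is infinite-dimensional. This is handled by the Hilbert--Schmidt theorem itself, which furnishes $\{u_i\}$ as an $a$-orthogonal basis of the closed subspace $\mathcal{M}$; partial sums of $\sum_i c_i u_i$ then converge in $\norm{\cdot}_a$ to $w$, and continuity of the bilinear forms permits termwise passage to the limit in both Parseval-type identities above.
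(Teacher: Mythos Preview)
Your proof is correct. Parts~1 and~2 match the paper's argument essentially verbatim: the paper also substitutes \eqref{eq:solop} into \eqref{eq:EP1} to get $a(u_i,v)=\lambda_i a(Su_i,v)$ and invokes Lemma~\ref{le:lambdapos}, and for Part~2 it cites the standard fact that $[\varepsilon,\infty)\cap\sigma(S)$ is finite for compact $S$.

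For Part~3 you take a different route. The paper invokes the Courant--Fischer--Weyl min-max principle directly on the self-adjoint operator $S$: it identifies $\mu_1=\sup_{v\ne 0}a(Sv,v)/\norm{v}_a^2$, rewrites $a(Sv,v)=|v|_b^2$, and then takes the reciprocal $\lambda_1=1/\mu_1$. Your argument instead expands $u$ in the eigenbasis via the splitting \eqref{eq:Vsplit}, verifies the needed orthogonalities, and bounds the Rayleigh quotient termwise. Both are valid; the paper's version is shorter because it outsources the work to the min-max principle, whereas yours is more self-contained, relying only on the Hilbert--Schmidt decomposition already set up before the lemma and in effect reproving the rank-one case of min-max by hand. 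Your last paragraph correctly flags and dispatches the only delicate point, the convergence of the series for $\norm{w}_a^2$ and $|w|_b^2$ when $\mathcal{M}$ is infinite-dimensional.
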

\begin{proof}
1.\ Using \eqref{eq:solop} in \eqref{eq:EP1}, we obtain
identity $a(u_i, v) = \lambda_i a( S u_i, v)$
for all $v \in V$. This is clearly equivalent
to \eqref{eq:EP2} with $\mu_i = 1/\lambda_i$ provided that $\lambda_i \neq 0$
and $\mu_i \neq 0$.
Since Lemma~\ref{le:lambdapos} guarantees $\lambda_i > 0$ for all $i=1,2,\dots$,
the only condition is $\mu_i \neq 0$.

2.\ Let $\sigma(S)$ stand for the spectrum of $S$.
A well known result about compact operators, see, e.g., \cite[Theorem~4.24 (b)]{Rudin_FA_91},
implies that the set
$[\varepsilon,\infty) \cap \sigma(S)$ is finite
for any $\varepsilon > 0$.
The claimed statement now immediately follows, because
$\lambda_i = 1/\mu_i$ for all $\mu_i \neq 0$.

3.\ Since $S$ is selfadjoint in $V$,
the Courant--Fischer--Weyl min-max principle, see, e.g.,~\cite{Strang_Fix_analysis_FEM_08},
implies that
$$
  \mu_1 = \sup \{ a(S v,v) : \norm{v}_a = 1 \}
  = \sup\limits_{v\in V, v\neq 0} \frac{a(S v, v)}{\norm{v}_a^2}
  = \sup\limits_{v\in V, |v|_b\neq 0} \frac{|v|_b^2}{\norm{v}_a^2}
$$
is finite and it is the largest eigenvalue of the operator $S$.
Consequently,
\begin{equation}
\label{eq:lambda1inf}
  \lambda_1 = \mu_1^{-1}
= \inf\limits_{v\in V, |v|_b\neq 0} \frac{\norm{v}_a^2}{|v|_b^2}
\end{equation}
is the smallest eigenvalue of problem \eqref{eq:EP1}.
\end{proof}

Equality \eqref{eq:lambda1inf} immediately implies an abstract inequality of Friedrichs--Poincar\'e type, namely
\begin{equation}
\label{eq:absineq}
|v|_b \leq C_{ab} \norm{v}_a \quad \forall v \in V,
\end{equation}
where $C_{ab} = \lambda_1^{-1/2}$. This value of $C_{ab}$ is optimal, because $v=u_1$ yields equality in \eqref{eq:absineq}.

Further, eigenfunctions $u_i$, $i=1,2,\dots$, normalized as in \eqref{eq:normalization}
satisfy the Parseval's identity
$$
  |u_*|_b^2 = \sum\limits_{i=1}^\infty |b(u_*,u_i)|^2 \quad \forall u_* \in V.
$$
The proof is based on the splitting \eqref{eq:Vsplit} and follows the same steps as the proof
of Lemma~3.2 in \cite{Seb_Vejch_2sidedb_eigen_Fr_Poin_trace_14}.

\begin{theorem}
\label{th:estwb}
Let the solution operator $S$ defined in \eqref{eq:solop} be compact
and let $\lambda_i$, $i=1,2,\dots$, be eigenvalues of \eqref{eq:EP1}.
Let $u_* \in V$ and $\lambda_*\in \R$ be arbitrary.
Let $w\in V$ be such that
\begin{equation}
\label{eq:defw}
a(w,v) = a(u_*,v) - \lambda_* b(u_*,v) \quad\forall v \in V.
\end{equation}
If $|u_*|_b \neq 0$ then
$$
  \min\limits_i \left| \frac{\lambda_i-\lambda_*}{\lambda_i} \right|
   \leq \frac{|w|_b}{|u_*|_b}.
$$
\end{theorem}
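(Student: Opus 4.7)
The plan is to expand both $u_*$ and $w$ along the eigenbasis provided by the Hilbert--Schmidt decomposition $V = \mathcal{M}\oplus\ker(S)$ and then use the defining identity \eqref{eq:defw} tested against individual eigenfunctions to relate the expansion coefficients of $w$ to those of $u_*$.

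First, I would write $u_* = \sum_{i} c_i u_i + u_*^0$ and $w = \sum_{i} d_i u_i + w^0$ with $u_*^0, w^0 \in \ker(S)$. For any $z^0 \in \ker(S)$, the definition of $S$ gives $b(z^0,v) = a(Sz^0,v) = 0$ for all $v\in V$, so the kernel components satisfy $|u_*^0|_b = |w^0|_b = 0$ and moreover $b(u_*^0,u_i) = b(w^0,u_i) = 0$ by the Cauchy--Schwarz inequality for the semi-inner product $b$. Together with the normalization \eqref{eq:normalization}, this yields $c_i = b(u_*,u_i)$ and $d_i = b(w,u_i)$, and the Parseval identity stated just before the theorem delivers
\begin{equation*}
|u_*|_b^2 = \sum_{i} c_i^2, \qquad |w|_b^2 = \sum_{i} d_i^2.
\end{equation*}

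Next, I would insert $v = u_j$ into \eqref{eq:defw}. The eigenvalue identity \eqref{eq:EP1} applied to $u_j$ gives $a(u_*,u_j) = \lambda_j b(u_*,u_j) = \lambda_j c_j$, while for the left-hand side, $a(u_i,u_j) = \lambda_j \delta_{ij}$ and $a(w^0,u_j) = \lambda_j b(w^0,u_j) = 0$ by the same eigenvalue identity used in the other direction, so $a(w,u_j) = \lambda_j d_j$. Hence
\begin{equation*}
\lambda_j d_j = \lambda_j c_j - \lambda_* c_j, \qquad \text{i.e.,} \qquad d_j = \frac{\lambda_j - \lambda_*}{\lambda_j}\, c_j.
\end{equation*}

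Substituting into the Parseval expression for $|w|_b^2$ and bounding the factors below by the minimum over $i$ of $|(\lambda_i-\lambda_*)/\lambda_i|^2$ gives
\begin{equation*}
|w|_b^2 = \sum_{i} \left(\frac{\lambda_i-\lambda_*}{\lambda_i}\right)^{\!2} c_i^2 \geq \min_i \left|\frac{\lambda_i-\lambda_*}{\lambda_i}\right|^2 |u_*|_b^2.
\end{equation*}
Taking the square root and dividing by $|u_*|_b > 0$ yields the claim. I expect the only delicate point to be the rigorous justification that the kernel components $u_*^0$ and $w^0$ drop out of the computation—specifically that $b(w^0,u_i)=0$ despite $b$ being only semidefinite—but this is handled cleanly by the Cauchy--Schwarz step above, after which the remainder is essentially algebraic.
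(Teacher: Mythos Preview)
Your proof is correct and follows the same eigenfunction-expansion route the paper has in mind (the paper simply refers to Theorem~3.3 of \cite{Seb_Vejch_2sidedb_eigen_Fr_Poin_trace_14}, whose proof proceeds exactly by Parseval and testing \eqref{eq:defw} against the $u_j$). A minor simplification: you do not actually need the explicit splitting $w=\sum_i d_i u_i + w^0$; since the eigenvalue identity gives $a(z,u_j)=\lambda_j b(z,u_j)$ for \emph{every} $z\in V$, the relation $\lambda_j d_j = \lambda_j c_j - \lambda_* c_j$ follows directly from \eqref{eq:defw} with $d_j=b(w,u_j)$, $c_j=b(u_*,u_j)$, and Parseval then finishes the argument without ever isolating the kernel components.
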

\begin{proof}
It follows the same steps as the proof of Theorem~3.3 in \cite{Seb_Vejch_2sidedb_eigen_Fr_Poin_trace_14}.
\end{proof}

The following theorem is a consequence of Theorem~\ref{th:estwb} and it has been proved in~\cite[Theorem~3.4]{Seb_Vejch_2sidedb_eigen_Fr_Poin_trace_14} in a slightly different context. We repeat it here, because it provides an abstract enclosure on the principal eigenvalue of~\eqref{eq:EP1} and we use it below in Theorem~\ref{th:lowerbound} to obtain the lower bound for the general symmetric elliptic eigenvalue problem.
\begin{theorem}[Abstract complementarity estimate]
\label{le:abscompl}
Let $u_*\in V$, $|u_*|_b = 1$,
$\lambda_* \in \R$ be arbitrary and let $w\in V$
satisfy \eqref{eq:defw}.
Let the solution operator $S$ given by \eqref{eq:solop} be compact.
Let $\lambda_1$ be the smallest eigenvalue of \eqref{eq:EP1}
and
let the relatively closest eigenvalue to $\lambda_*$ be $\lambda_1$,
i.e., let
\begin{equation}
\label{eq:closest}
 \left|\frac{\lambda_1 - \lambda_*}{\lambda_1}\right|
\leq
  \left|\frac{\lambda_i - \lambda_*}{\lambda_i}\right|
\quad \forall i=1,2,\dots.
\end{equation}
Further, let $A \geq 0$ and $B \geq 0$ be such that
%
%
\begin{equation}
\label{eq:abscompl}
  \norm{w}_a \leq A + C_{ab} B  
  \quad\text{and}\quad
  B < \lambda_*
\end{equation}
where $C_{ab} = \lambda_1^{-1/2}$ is the optimal constant from \eqref{eq:absineq}.
Then
\begin{equation}
\label{eq:enclo_eig}
  \frac14 \left( -A + \sqrt{A^2 + 4(\lambda_* - B)} \right)^2 \leq \lambda_1. \\
\end{equation}
\end{theorem}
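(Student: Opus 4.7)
The plan is to chain Theorem~\ref{th:estwb}, the abstract Friedrichs--Poincar\'e inequality~\eqref{eq:absineq}, and the hypothesized bound~\eqref{eq:abscompl} into a single quadratic inequality for $\sqrt{\lambda_1}$, and then solve it.

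First, I apply Theorem~\ref{th:estwb} with the given $u_*$, $\lambda_*$, and $w$. Since $|u_*|_b = 1 \neq 0$, the theorem yields
$$
  \min_i \left|\frac{\lambda_i - \lambda_*}{\lambda_i}\right| \leq |w|_b.
$$
Hypothesis~\eqref{eq:closest} says that the minimum on the left is attained (at least tied) at $i=1$, so $|\lambda_1 - \lambda_*|/\lambda_1 \leq |w|_b$. Next, I apply~\eqref{eq:absineq} to $w\in V$ to obtain $|w|_b \leq C_{ab}\|w\|_a = \lambda_1^{-1/2}\|w\|_a$, and then invoke the first half of~\eqref{eq:abscompl} to get $\|w\|_a \leq A + C_{ab} B = A + \lambda_1^{-1/2}B$. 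Putting the three bounds together and multiplying through by $\lambda_1$ produces
$$
  |\lambda_1 - \lambda_*| \leq A\,\lambda_1^{1/2} + B.
$$

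The final step is to turn this into the stated lower bound on $\lambda_1$. Write $x = \lambda_1^{1/2}\geq 0$. The absolute value is only binding on the side $\lambda_* - \lambda_1 \leq A x + B$, which rearranges to the quadratic inequality $x^2 + A x - (\lambda_* - B) \geq 0$. Because the assumption $B < \lambda_*$ makes the constant term $-(\lambda_* - B)$ strictly negative, this quadratic in $x$ has one positive root, namely $x_+ = \tfrac12\bigl(-A + \sqrt{A^2 + 4(\lambda_* - B)}\bigr)$. Since $x \geq 0$ and the quadratic is nonnegative, we must have $x \geq x_+$, and squaring gives precisely~\eqref{eq:enclo_eig}.

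The one subtlety, and the place I would pause to justify carefully, is that the bound $|\lambda_1 - \lambda_*| \leq A\lambda_1^{1/2} + B$ only forces the desired quadratic inequality on the side where $\lambda_* \geq \lambda_1$. In the opposite regime $\lambda_1 \geq \lambda_*$, one checks directly that the right-hand side of~\eqref{eq:enclo_eig} never exceeds $\lambda_*$ (e.g., by observing that $x_+^2 = \lambda_* - B - A x_+ \leq \lambda_* - B \leq \lambda_*$ because $A,B,x_+\geq 0$), so the conclusion holds trivially in that case. Aside from this short case distinction, the proof is essentially an algebraic manipulation; the real work is front-loaded into Theorem~\ref{th:estwb} and the optimality of $C_{ab}$ in~\eqref{eq:absineq}.
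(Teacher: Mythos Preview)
Your argument is correct and is precisely the intended chain: Theorem~\ref{th:estwb} followed by the Friedrichs--Poincar\'e inequality~\eqref{eq:absineq} and the assumed bound~\eqref{eq:abscompl}, then solving the resulting quadratic in $\sqrt{\lambda_1}$, with the case $\lambda_1 \geq \lambda_*$ handled separately. The only slip is cosmetic: in your final paragraph you mean the \emph{left}-hand side of~\eqref{eq:enclo_eig} (i.e., $x_+^2$), not the right-hand side, when you verify $x_+^2 = \lambda_* - B - A x_+ \leq \lambda_*$.
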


The crucial assumption of the above results is the compactness of the solution operator $S$. Therefore, we provide a useful tool that enables to utilize standard compactness results such as the Rellich and trace theorems.
\begin{lemma}
\label{le:compS}
Let $V$ be a Hilbert space with scalar product $a(\cdot,\cdot)$.
Let $H_1$ and $H_2$ be Hilbert spaces and let $\gamma_1 : V \rightarrow H_1$ and $\gamma_2: V \rightarrow H_2$ be two compact operators.
If the bilinear form $b$ is defined as
\begin{equation}
\label{eq:bH1H2}
  b(u,v) = b_1 (\gamma_1 u, \gamma_1 v) + b_2 (\gamma_2 u, \gamma_2 v)
\end{equation}
for some continuous bilinear forms $b_1$ and $b_2$ on $H_1$ and $H_2$, respectively,
then
the solution operator $S: V \rightarrow V$ defined in \eqref{eq:solop} is compact.
\end{lemma}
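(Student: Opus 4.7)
The plan is to prove compactness of $S$ directly from the sequential definition: take an arbitrary bounded sequence $\{u_n\}\subset V$ and extract a subsequence such that $\{Su_n\}$ is Cauchy in $(V,\norm{\cdot}_a)$, hence convergent by completeness.

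First I would exploit the hypothesis that $\gamma_1$ and $\gamma_2$ are compact: applying $\gamma_1$ to $\{u_n\}$ yields a subsequence along which $\gamma_1 u_{n_k}$ converges in $H_1$; applying $\gamma_2$ to that subsequence yields a further subsequence (which I still denote $\{u_{n_k}\}$) so that both $\gamma_1 u_{n_k}$ and $\gamma_2 u_{n_k}$ are Cauchy in the respective spaces $H_i$. I would also note at this point that $S$ is continuous (as recalled in the paragraph following \eqref{eq:solop}), so $\{Su_{n_k}\}$ is a bounded sequence in $V$, and therefore $\{\gamma_i Su_{n_k}\}$ is bounded in $H_i$ by the continuity of $\gamma_i$.

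The central computation is then the identity obtained by testing \eqref{eq:solop} with $v = Su_{n_k}-Su_{n_j}\in V$ and using \eqref{eq:bH1H2}:
\begin{equation*}
\norm{Su_{n_k}-Su_{n_j}}_a^2
 = b_1\bigl(\gamma_1(u_{n_k}-u_{n_j}),\,\gamma_1(Su_{n_k}-Su_{n_j})\bigr)
 + b_2\bigl(\gamma_2(u_{n_k}-u_{n_j}),\,\gamma_2(Su_{n_k}-Su_{n_j})\bigr).
\end{equation*}
Bounding each term by continuity of $b_i$ gives
\begin{equation*}
\norm{Su_{n_k}-Su_{n_j}}_a^2
 \leq \sum_{i=1}^{2} \|b_i\|\,\|\gamma_i(u_{n_k}-u_{n_j})\|_{H_i}\,\|\gamma_i(Su_{n_k}-Su_{n_j})\|_{H_i}.
\end{equation*}
The second factor in each summand is bounded uniformly in $k,j$ by the preceding observation, while the first factor tends to zero as $k,j\to\infty$ because $\{\gamma_i u_{n_k}\}$ is Cauchy in $H_i$. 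Therefore $\{Su_{n_k}\}$ is Cauchy in $V$ and hence convergent, proving the compactness of $S$.

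The argument is quite routine and I do not expect any serious obstacle; the only subtlety is to make sure a single subsequence works simultaneously for both compact operators $\gamma_1$ and $\gamma_2$, which is handled by extracting subsequences successively. Writing the bilinear forms $b_1,b_2$ in the test identity and testing with $v=Su_{n_k}-Su_{n_j}$ is the key trick that converts boundedness of $\gamma_i(Su_{n_k})$ together with convergence of $\gamma_i(u_{n_k})$ into a genuine $V$-norm estimate.
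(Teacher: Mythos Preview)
Your proof is correct but proceeds by a different route than the paper. The paper introduces auxiliary solution operators $S_i : H_i \to V$ defined via $a(S_i\varphi, v) = b_i(\varphi, \gamma_i v)$ for all $v\in V$, notes that each $S_i$ is linear and continuous by the Riesz representation theorem, and hence that each composition $S_i\gamma_i : V\to V$ is compact (continuous composed with compact). It then checks the decomposition $S = S_1\gamma_1 + S_2\gamma_2$ and concludes since a sum of compact operators is compact. Your argument instead works directly with the sequential characterization: you extract a subsequence along which both $\gamma_i u_{n_k}$ converge and then show $\{Su_{n_k}\}$ is Cauchy by testing \eqref{eq:solop} with $v = Su_{n_k}-Su_{n_j}$ and using continuity of the $b_i$. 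Your approach avoids introducing the auxiliary operators $S_i$ and the abstract ``sum of compacts'' fact, at the price of a short explicit estimate; the paper's approach is more structural and makes the additive splitting of $S$ explicit, which could be convenient if one later needed separate information on the two pieces.
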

\begin{proof}
First, we define the solution operator $S_1 : H_1 \rightarrow V$ as follows.
Given $\varphi \in H_1$, we use the Riesz representation theorem to define a unique $S_1 \varphi \in V$ such that
$$
  a(S_1 \varphi, v) = b_1 (\varphi, \gamma_1 v) \quad\forall v \in V.
$$
Clearly, $S_1$ is linear and continuous. Since $\gamma_1$ is compact, the composition $S_1\gamma_1 : V \rightarrow V$ is compact as well.

In the same way, we can define the solution operator $S_2 : H_2 \rightarrow V$ and show the compactness of $S_2\gamma_2: V \rightarrow V$.
Now, using the definition \eqref{eq:solop}, we obtain
$$
  a(Su,v) = b(u,v) = b_1 (\gamma_1 u, \gamma_1 v) + b_2 (\gamma_2 u, \gamma_2 v) = a(S_1 \gamma_1 u, v) + a(S_2 \gamma_2 u, v)
$$
for all $u,v \in V$. Hence, $S = S_1\gamma_1 + S_2 \gamma_2$ and it is a compact operator, because it is a sum of two compact operators.
\end{proof}

%

\section{Symmetric elliptic eigenvalue problem}
\label{se:problem}

In what follows we consider elliptic eigenvalue problems formulated in a domain $\Omega$ with mixed boundary conditions of Dirichlet, Neumann, and/or Robin type. We seek the eigenvalue $\lambda_i \in \R$ and the corresponding eigenfunction $u_i \neq 0$ satisfying
\bse \label{eq:eigp_strong}
\begin{align}
  -\ddiv( \cA \nabla u_i ) + c u_i &= \lambda_i \beta_1 u_i  &\quad &\text{in }\Omega, \\
  (\cA \nabla u_i) \cdot \tn + \alpha u_i &= \lambda_i \beta_2 u_i &\quad &\text{on }\GammaN, \\
  u_i &= 0 &\quad &\text{on }\GammaD,
\end{align}
\ese
where $\GammaD$ and $\GammaN$ are portions of the boundary $\partial\Omega$ and $\tn$ stands for the unit outward facing normal to $\partial\Omega$.
Note that specific choices of parameters $\cA$, $c$, $\alpha$, $\beta_1$, $\beta_2$, and the sets $\GammaN$ and $\GammaD$ yield various well known types of eigenvalue problems, such as the Dirichlet or Neumann Laplace eigenvalue problem, or the Steklov eigenvalue problem.

Now, we show that the weak formulation of the eigenvalue problem \eqref{eq:eigp_strong} fits into the above abstract setting. We show compactness of the corresponding solution operator and, consequently, the existence of a countable sequence of eigenvalues $\lambda_i$ and the corresponding eigenfunctions $u_i$, $i=1,2,\dots$. We order them such that $\lambda_1 \leq \lambda_2 \leq \lambda_3 \leq \cdots$.

In order to introduce the weak formulation rigorously, we assume $\Omega$ to be a Lipschitz domain in $\R^2$.
The portions $\GammaD$ and $\GammaN$ are relatively open such that  $\partial\Omega = \oGammaD \cup \oGammaN$ and $\GammaD \cap \GammaN = \emptyset$.
Note that we admit cases with either $\GammaD$ or $\GammaN$ being empty.
We assume the diffusion coefficient to be a matrix function
$\cA \in [L^\infty(\Omega)]^{2\times 2}$, coefficients $c$ and $\beta_1$ to be in $L^\infty(\Omega)$, and coefficients $\alpha$ and $\beta_2$ in $L^\infty(\GammaN)$.
For technical reasons we also assume coefficients $\cA$, $c$, $\alpha$, $\beta_1$, and $\beta_2$ to be piecewise constant.
In order to guarantee the symmetry and ellipticity,
we assume coefficients $c$ and $\alpha$ to be nonnegative and the matrix $\cA$ to be symmetric and uniformly positive definite,
i.e. we assume existence of a constant $C>0$ such that
$$
  \b{\xi}^T \cA(x) \b{\xi} \geq C |\b{\xi}|^2 \quad\forall \b{\xi} \in \R^2
  \text{ and for almost all } x\in\Omega,
$$
where $|\cdot|$ stands for the Euclidean norm.

Let us note that the restriction to two spatial dimensions and the assumption of piecewise constant coefficients are due to technical reasons connected with the local flux reconstruction. These assumptions are needed for proofs of properties of the locally reconstructed flux and are not fundamental.

In what follows, we use the notation $(\cdot,\cdot)_Q$ for the $L^2(Q)$ scalar product
and $\norm{\cdot}_Q$ for the $L^2(Q)$-norm, where $Q$ typically stands for a subdomain of $\Omega$ or $\partial\Omega$. We also adopt the usual convention that if $Q = \Omega$ then we omit the subscript $\Omega$. Using this conventions, we define bilinear forms
\begin{align}
\label{eq:blf}
  a(u,v) &= (\cA \nabla u,\nabla v) + (c u, v) + (\alpha u, v)_\GammaN,
\\
\label{eq:blfb}
  b(u,v) &= (\beta_1 u, v) + (\beta_2 u, v)_\GammaN
\end{align}
and the usual space 
$$
  V = \{ v \in H^1(\Omega) : v = 0 \text{ on } \GammaD \}.
$$
We assume the form $a(u,v)$ to be a scalar product on $V$. This is the case if at least one of the following conditions is satisfied:
(a) $c > 0$ on a subset of $\Omega$ of positive measure,
(b) $\alpha > 0$ on a subset of $\GammaN$ of positive measure,
(c) measure of $\GammaD$ is positive.
In agreement with the notation introduced above, we denote by $\|{\cdot}\|_a$ and $|{\cdot}|_b$ the norm induced by $a({\cdot}, {\cdot})$ and the seminorm induced by $b({\cdot}, {\cdot})$, respectively.

In what follows, we consider the eigenvalue problem of finding $u_i \in V$, $u_i\neq 0$, and
$\lambda_i \in \R$ such that
\begin{equation}
\label{eq:eigp}
  a(u_i,v) = \lambda_i b(u_i,v) \quad \forall v \in V.
\end{equation}
The following theorem shows compactness of the corresponding solution operator.
Consequently, eigenproblem \eqref{eq:eigp} is well defined and posses all properties listed in Lemma~\ref{le:Sgamma} and Theorems~\ref{th:estwb} and \ref{le:abscompl}.
\begin{theorem}
Let bilinear forms $a(\cdot,\cdot)$ and $b(\cdot,\cdot)$ be defined by \eqref{eq:blf} and \eqref{eq:blfb} with the above listed requirements on the coefficients. Let $a(\cdot,\cdot)$ be a scalar product in $V$.
Then the solution operator $S$ defined by \eqref{eq:solop} is compact.
\end{theorem}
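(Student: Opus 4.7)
The plan is to apply Lemma~\ref{le:compS} directly, identifying suitable auxiliary Hilbert spaces $H_1$, $H_2$ and compact operators $\gamma_1$, $\gamma_2$ that allow the bilinear form $b$ from \eqref{eq:blfb} to be rewritten in the decomposed form \eqref{eq:bH1H2}. The natural choice is $H_1 = L^2(\Omega)$ and $H_2 = L^2(\GammaN)$, with $\gamma_1 : V \to L^2(\Omega)$ the canonical inclusion and $\gamma_2 : V \to L^2(\GammaN)$ the trace operator. On these spaces I would define $b_1(\varphi,\psi) = (\beta_1 \varphi, \psi)_\Omega$ and $b_2(\varphi,\psi) = (\beta_2 \varphi, \psi)_{\GammaN}$, which are continuous bilinear forms thanks to $\beta_1 \in L^\infty(\Omega)$ and $\beta_2 \in L^\infty(\GammaN)$. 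By construction this yields $b(u,v) = b_1(\gamma_1 u, \gamma_1 v) + b_2(\gamma_2 u, \gamma_2 v)$ for all $u,v \in V$.

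To invoke Lemma~\ref{le:compS} I need to verify that $\gamma_1$ and $\gamma_2$ are compact as operators from $V$ into the respective spaces. First, since $a(\cdot,\cdot)$ is a scalar product on $V$, the induced norm $\|\cdot\|_a$ is equivalent to the standard $H^1(\Omega)$-norm on $V$; a Poincar\'e/Friedrichs-type argument using the assumption that at least one of conditions (a), (b), (c) holds justifies this equivalence. Then the compactness of $\gamma_1$ is a direct consequence of the Rellich--Kondrachov theorem on the bounded Lipschitz domain $\Omega$, since $V \subset H^1(\Omega)$ is continuously embedded and $H^1(\Omega) \hookrightarrow L^2(\Omega)$ is compact. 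For $\gamma_2$, the standard trace theorem gives a continuous map $H^1(\Omega) \to H^{1/2}(\partial\Omega)$, and the embedding $H^{1/2}(\partial\Omega) \hookrightarrow L^2(\partial\Omega)$ is compact, so their composition restricted to $\GammaN$ is compact as well.

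With both $\gamma_1$ and $\gamma_2$ shown to be compact and the decomposition of $b$ established, Lemma~\ref{le:compS} immediately yields the compactness of the solution operator $S$ defined by \eqref{eq:solop}. I do not foresee a genuine obstacle here; the only point requiring a touch of care is justifying the equivalence of $\|\cdot\|_a$ with the $H^1$-norm under the three alternative conditions on $c$, $\alpha$, and $\GammaD$, which is a standard consequence of the Friedrichs or Poincar\'e inequality together with the $L^\infty$-bound on $\cA$. Once that equivalence is in place, the compactness claim reduces to classical embedding and trace results applied on a bounded Lipschitz domain in $\R^2$.
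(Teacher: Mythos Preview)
Your proposal is correct and follows essentially the same route as the paper's own proof: both apply Lemma~\ref{le:compS} with $H_1=L^2(\Omega)$, $H_2=L^2(\GammaN)$, $\gamma_1$ the inclusion (compact by Rellich), and $\gamma_2$ the trace (compact by trace theorems). Your extra remarks on the equivalence of $\|\cdot\|_a$ with the $H^1$-norm and the factorization of the trace through $H^{1/2}(\partial\Omega)$ are sound and merely add detail the paper leaves implicit.
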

\begin{proof}
Notice that the form $b(u,v)$ defined in \eqref{eq:blfb} has the form \eqref{eq:bH1H2}.
Indeed, $H_1 = L^2(\Omega)$, $H_2 = L^2(\GammaN)$, bilinear forms $b_1$ and $b_2$ are just scalar products in $L^2(\Omega)$ and $L^2(\GammaN)$ with weighting functions $\beta_1$ and $\beta_2$, respectively. Operator $\gamma_1$ is the identity from the Sobolev space $V$ to $L^2(\Omega)$ and
it is compact due to Rellich theorem \cite[Theorem~6.3]{Adams_Sob_spaces_03}.
Operator $\gamma_2$ is the trace operator from $V$ to $L^2(\Gamma_N)$
and its compactness is proved in \cite[Theorem 6.10.5]{Kuf_John_Fucik_Function_spaces}; see also \cite{Biegert:2009}.
Thus, we can apply Lemma~\ref{le:compS} and conclude that the solution operator corresponding to problem
\eqref{eq:eigp} is compact.
\end{proof}

We discretize the eigenvalue problem \eqref{eq:eigp} by the standard conforming finite element method.
To avoid technicalities with curved elements, we assume the domain $\Omega$ to be polygonal
and consider a conforming (face-to-face) triangular mesh $\cT$. Formally, $\cT$ is a set of closed triangles. For technical reasons we assume that the mesh $\cT$ is a member of a shape regular family of triangulations $\cF$. Namely, we assume existence of a constant $C_{\mathrm{s}} > 0$ such that
\begin{equation} \label{shapereg}
\frac{h_{K}}{\rho_{K}} \leq C_{\mathrm{s}}
\quad \forall K \in
    \cT \text{ and } \forall \cT \in \cF,
\end{equation}
where $h_K = \operatorname{diam}(K)$ is the diameter of the triangle $K$ and $\rho_{K}$ denotes the diameter of the largest circle inscribed into $K$.

We note that the shape regularity~\eqref{shapereg} implies local quasi-uniformity. This can be easily shown by using \cite[Theorem 1]{Leng_Tang_Some_ineq_inra_simpl_faces_96}. Consequently, there exists a constant $C_{H} > 0$ such that for all meshes $\cT \in \cF$ and all elements $K \in \cT$ we have
\begin{equation} \label{locquasiuni}
  h_K \leq C_{H} h_{K'} \quad \forall K' \in \cT(\omega_K),
\end{equation}
where $\omK$ stands for the patch of elements sharing at least one vertex with $K$ and by $\cT(\omK)$ the set of elements in this patch. More precisely,
\begin{align}
 \label{eq:omK}
 \omK &= \interior\bigcup \{ K' \in \cT : K' \cap K \neq \emptyset \},
 \\
 \label{eq:TomK}
 \cT(\omK) &= \{ K' \in \cT : K' \subset \overline{\omega}_K \},
\end{align}
where $\interior$ denotes the interior of a domain.
Shape regularity also implies that the numbers of elements in patches $\cT(\omK)$ are uniformly bounded throughout the whole family $\cF$ as well as the numbers of patches an element is contained in.

%

Using the mesh $\cT$, we define the finite element space consisting of globally continuous and piecewise polynomial functions of degree at most $p$. We denote by $P_p(K)$ the space of polynomials of degree at most $p$ on the triangle $K \in \cT$ and set
\begin{equation}
\label{eq:VT}
  \VT = \{ \vT \in V : \vT|_K \in P_p(K),\ \forall K \in \cT\}.
\end{equation}
The discrete counterpart to the eigenvalue problem~\eqref{eq:eigp} reads:
%
Find $\lamT_i \in\R$ and $\uT_i \in \VT$, $\uT_i \neq 0$
such that
\begin{equation}
\label{eq:discreigp}
  a(\uT_i, \vT) = \lamT_i b(\uT_i, \vT ) \quad \forall \vT \in \VT.
\end{equation}

\section{Classical residual error estimator}
\label{se:classicalest}
In this section, we review the classical residual error estimator and show its local efficiency with respect to $\|w\|_a$, see \eqref{eq:defw}.
We consider an approximate eigenpair $\lamTf\in\R$ and $\uTf\in\VT$ for some fixed $i\geq 1$.
We denote by
\begin{equation}
\label{eq:R}
 R = - \ddiv (\cA\nabla\uTf) + c\uTf - \lamTf \beta_1 \uTf
\end{equation}
the classical residual of the approximation $\uTf$.
In order to define the jump residual, we introduce certain notation. We denote by $\cET$, $\cETI$, $\cETN$, and $\cETD$ the sets of edges in $\cT$, interior edges, edges on the Neumann part of the boundary, and edges on the Dirichlet part of the boundary, respectively.
For an edge $\Gamma \in \cET$, we consider an arbitrary but fixed unit normal vector $\tn_\Gamma$ and assume that for the boundary edges it coincides with the unit outward normal $\tn$.
Each interior edge $\Gamma \in \cETI$ lies between two elements and we denote by $K^-$ the one which the normal $\tn_\Gamma$ aims to. The other element is denoted by $K^+$ and we clearly have $\tn_{K^+} = \tn_\Gamma$ and $\tn_{K^-} = -\tn_\Gamma$. Thus, the jump of a function $\varphi$ over the edge $\Gamma \in \cETI$ is defined as the function $[\varphi]_\Gamma = \varphi|_{K^+} - \varphi|_{K^-}$ defined on $\Gamma$
and consequently $(\cA \nabla \uTf)|_{K^+} \cdot \tn_{K^+} + (\cA \nabla \uTf)|_{K^-} \cdot \tn_{K^-} = [\cA \nabla \uTf]_\Gamma \cdot \tn_\Gamma$.
Using this notation, we set
\begin{equation}
\label{eq:J}
   J|_\Gamma = \left\{ \begin{array}{ll}
    [\cA \nabla \uTf]_\Gamma \cdot \tn_\Gamma
       & \text{for } \Gamma \in \cETI,
    \\
    (\cA \nabla \uTf)|_\Gamma \cdot \tn - \lamTf \beta_2 \uTf|_\Gamma + \alpha \uTf|_\Gamma & \text{for } \Gamma \in \cETN,
    \\
    0 & \text{for } \Gamma \in \cETD.
    \end{array} \right.
\end{equation}

Quantities $R$ and $J$ define classical residual indicators $\eta_{R,K}$ of the error on elements $K\in\cT$ and the corresponding global error estimator $\eta_R$ as
\begin{equation}
\label{eq:etaRK}
  \eta_{R,K}^2 = h_K^2 \| R \|_K^2 + h_K \| J \|_{\partial K}^2 \quad \forall K\in\cT
  \quad\text{and}\quad
  \eta_R^2 = \sum_{K\in\cT} \eta_{R,K}^2.
\end{equation}

Notice that the equation \eqref{eq:defw} with bilinear forms defined in \eqref{eq:blf} and \eqref{eq:blfb}
and with $u_* = \uTf$ and $\lambda_* = \lamTf$ can be expressed as
\begin{equation}
  \label{eq:wRJ}
  a(w,v) = a(\uTf,v) - \lamTf b(\uTf,v) = \sum_{K\in\cT} (R,v)_K + \sum_{\Gamma\in\cET} (J,v)_\Gamma
  \quad\forall v \in V.
\end{equation}
This identity, together with the standard technique of bubble functions \cite{Verfurth:1994}, see also \cite{AinOde:2000},
can be used to prove the efficiency of the classical residual estimator with respect to the energy norm of the residual representative $w$. Since we assume piecewise constant data, there are no oscillation terms.
\begin{lemma}[Efficiency of the classical residual indicators]
\label{le:effclasres}
Let $\cF$ be a shape regular family of triangulations and let $\cT \in \cF$.
Let $\lamTf\in\R$, $\uTf\in\VT$ be an arbitrary approximation of the eigenpair $\lambda_i \in\R$, $u_i\in V$ of \eqref{eq:eigp}.
Let $w \in V$ be given by \eqref{eq:defw} with bilinear forms \eqref{eq:blf} and \eqref{eq:blfb} and with $\lambda_* = \lamTf$ and $u_* = \uTf$. Let $\eta_{R,K}$ stand for the classical residual error indicators \eqref{eq:etaRK}. Then there exists a constant $C>0$ uniform over the family $\cF$ such that
\begin{equation}
  \label{eq:effetaRK}
  \eta_{R,K} \leq C \norm{w}_{a,\omK}.
\end{equation}
\end{lemma}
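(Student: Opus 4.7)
The plan is to follow Verfürth's standard bubble-function argument, relying on identity \eqref{eq:wRJ}, which represents $a(w,v)$ as element and edge residual contributions, and exploiting the fact that piecewise constant coefficients force $R|_K$ and $J|_\Gamma$ to be polynomials so no oscillation terms appear.

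First I would bound the element residual term $h_K\|R\|_K$. Fix $K \in \cT$ and let $\psi_K$ be the usual cubic bubble function on $K$ (the product of the barycentric coordinates), extended by zero to $\Omega$. Plug $v := \psi_K R$ into \eqref{eq:wRJ}: since $v$ vanishes on $\partial K$ and is supported in $K$, the edge contributions drop and only the element term on $K$ survives, giving $a(w,v) = (R, \psi_K R)_K$. Polynomial norm equivalence (on the reference element and scaling back) yields $(R,\psi_K R)_K \geq c_1 \|R\|_K^2$, while continuity of $a$ and an inverse inequality for $\psi_K R$ (again using that $R$ is a polynomial) give $|a(w,v)| \leq C \|w\|_{a,K} \|\psi_K R\|_{a,K} \leq C' h_K^{-1} \|w\|_{a,K} \|R\|_K$, where the lower-order $(cu,v)$ and boundary parts of $\|v\|_{a,K}$ are absorbed by shape regularity. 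Combining yields $h_K \|R\|_K \leq C \|w\|_{a,K}$.

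Next I would bound $h_K^{1/2}\|J\|_\Gamma$ for each $\Gamma \subset \partial K$. For an interior edge $\Gamma \in \cETI$ shared by $K$ and $K'$, let $\psi_\Gamma$ be the standard edge bubble on $\omega_\Gamma = K \cup K'$, extend $J|_\Gamma$ to a polynomial $E(J)$ on $\omega_\Gamma$ by, e.g., the constant-normal extension, and set $v := \psi_\Gamma E(J)$, which lies in $V$ because it vanishes on $\partial \omega_\Gamma$. Testing \eqref{eq:wRJ} with $v$ produces
\begin{equation*}
  (J, \psi_\Gamma J)_\Gamma = a(w,v) - \sum_{K'' \in \{K,K'\}} (R, v)_{K''}.
\end{equation*}
The left side is bounded below by $c_2 \|J\|_\Gamma^2$ by polynomial equivalence on $\Gamma$, while scaling estimates give $\|v\|_{\omega_\Gamma} \leq C h_K^{1/2} \|J\|_\Gamma$ and $\|v\|_{a,\omega_\Gamma} \leq C h_K^{-1/2} \|J\|_\Gamma$. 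Cauchy--Schwarz on both right-hand terms, division by $\|J\|_\Gamma$, and multiplication by $h_K^{1/2}$ produce
\begin{equation*}
  h_K^{1/2} \|J\|_\Gamma \leq C\bigl( h_K \|R\|_{\omega_\Gamma} + \|w\|_{a,\omega_\Gamma} \bigr),
\end{equation*}
and the element estimate together with local quasi-uniformity \eqref{locquasiuni} absorbs $h_K\|R\|_{\omega_\Gamma}$ into $\|w\|_{a,\omega_K}$. For a Neumann edge $\Gamma \in \cETN$ the same argument is used with $\psi_\Gamma$ supported in the single adjacent element; for Dirichlet edges there is nothing to bound since $J \equiv 0$.

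Summing the two bounds over the at most three edges of $K$ gives \eqref{eq:effetaRK}. The only slightly delicate point I foresee is the bookkeeping in the edge estimate: one must verify that the extension $E(J)$ can be chosen so that $\psi_\Gamma E(J) \in V$ (i.e.\ vanishes on $\GammaD$ and on $\partial\omega_\Gamma$) and that all scaling constants depend only on the shape-regularity constant $C_{\mathrm{s}}$ and the polynomial degree $p$, so that the final constant $C$ is uniform across the family $\cF$.
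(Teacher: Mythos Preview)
Your proposal is correct and follows essentially the same approach as the paper's proof: both use Verf\"urth's bubble-function technique, testing \eqref{eq:wRJ} first with the interior bubble $\psi_K R$ to bound $h_K\|R\|_K$, then with an edge bubble times a polynomial extension of $J$ to bound $h_K^{1/2}\|J\|_\Gamma$, and finally absorbing the residual term in the edge estimate via the already-established element bound. The only cosmetic difference is that the paper packages the norm equivalences via a weighted norm $\|\varphi\|_{\psi_K}^2=(\psi_K\varphi,\varphi)_K$ and cites \cite{AinOde:2000} for the scaling estimates, while you spell out the same ingredients directly.
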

\begin{proof}
Let $\psi_K$ be an interior bubble function on the element $K$, i.e. $\psi_K > 0$ in the interior of $K$ and vanishes on its boundary $\partial K$. For example, it can be a cubic polynomial. We define a weighted norm $\norm{\varphi}_{\psi_K}^2 = (\psi_K \varphi, \varphi)_K$. Based on the equivalence of norms on finite dimensional spaces and the inverse inequality, there exist constants $C_1>0$ and $C_2 >0$ such that
\begin{equation}
  \label{eq:normequiv}
  \norm{\varphi}_K \leq C_1 \norm{\varphi}_{\psi_K} 
  \quad\text{and}\quad
  \norm{\psi_K \varphi}_{H^1(K)} \leq C_2 h_K^{-1} \norm{\varphi}_K \quad \forall \varphi \in P_p(K),
\end{equation}
see e.g. \cite[Theorem~2.2]{AinOde:2000} for details.
Since $\psi_K R$ is supported in $K$ only and since $\psi_K R \in V$, we can use it as a test function in \eqref{eq:wRJ} and obtain
$ a_K(w,\psi_K R) = a(w,\psi_K R) = \norm{ R }^2_{\psi_K}$,
where $a_K$ is the restriction of $a$ on $K$, i.e. $a_K$ is defined as in \eqref{eq:blf}, but all integrals over $\Omega$ are replaced by integrals over $K$.
Consequently, since $R|_K \in P_p(K)$, we have
$$
  \norm{R}_K^2 \leq C_1^2 \norm{R}_{\psi_K}^2 = C_1^2 a_K(w,\psi_K R)
  \leq C_1^2 \norm{ w }_{a,K} \norm{\psi_K R}_{a,K}
  \leq C h_K^{-1} \norm{ w }_{a,K} \norm{R}_K,
$$
where the last inequality follows from the equivalence of the energy and $H^1$ norms and \eqref{eq:normequiv}.
Thus,
\begin{equation}
  \label{eq:normRest}
  \norm{R}_K \leq C h_K^{-1} \norm{ w }_{a,K}.
\end{equation}

Similarly, we can bound the jumps $J$. Let $\Gamma$ be an edge, $\omega_\Gamma = \interior\bigcup \{ K \in \cT : \Gamma \subset \partial K \}$ and $\cT(\omega_\Gamma) = \{ K \in \cT : K \subset {\overline{\omega}}_\Gamma \}$. Let $\psi_\Gamma$ be an edge bubble function, i.e. $\psi_\Gamma > 0$ in $\omega_\Gamma$ and vanishes on $\partial \omega_\Gamma$. For example, it can be a quadratic polynomial on each element $K \in \cT(\omega_\Gamma)$.
As above, we introduce a weighted norm $\norm{\varphi}_{\psi_\Gamma}^2 = (\psi_\Gamma \varphi,\varphi)_\Gamma$ and consider estimates
\begin{align}
  \label{eq:psigammanorm}
  \norm{\varphi }_\Gamma &\leq C_3 \norm{ \varphi }_{\psi_\Gamma}
  \quad \forall \varphi\in P_p(\Gamma),
  \\
  \label{eq:psigammanorm2}
  \norm{\psi_\Gamma \varphi }_K &\leq C_4 h_K^{1/2}\norm{ \varphi }_\Gamma
  \quad \forall \varphi\in P_p(K),
  \\
  \label{eq:psigammanorm3}
  \norm{\psi_\Gamma \varphi }_{H^1(K)} &\leq C_5 h_K^{-1/2}\norm{ \varphi }_\Gamma
  \quad \forall \varphi\in P_p(K),
\end{align}
where $K$ is any element from $\cT(\omega_\Gamma)$, see e.g. \cite[Theorem~2.4]{AinOde:2000} for details.

The jumps $J$ are defined on edges $\Gamma \in \cET$ and $J|_\Gamma \in P_p(\Gamma)$. For the purpose of this prove we extend $J$ polynomially into the interior of all elements. This extension can be arbitrary but fixed and satisfying $J|_K \in P_p(K)$.
If $\Gamma \in \cETI \cup \cETN$ then $\psi_\Gamma J \in V$ and we can use it as a test function in \eqref{eq:wRJ} and obtain $a_{\omega_\Gamma}(w,\psi_\Gamma J) = a(w,\psi_\Gamma J) = (\psi_\Gamma R, J)_{\omega_\Gamma} + (\psi_\Gamma J, J)_\Gamma$, because $\psi_\Gamma J$ is supported in $\omega_\Gamma$.
Using \eqref{eq:psigammanorm}, we have
$$
  \norm{J}_\Gamma^2 \leq C_3^2 \norm{ J }_{\psi_\Gamma}^2
  = C_3^2 \left[ a_{\omega_\Gamma}(w,\psi_\Gamma J) - (\psi_\Gamma R, J)_{\omega_\Gamma} \right]
  \leq C_3^2 \left[ \norm{ w }_{a,\omega_\Gamma} \norm{\psi_\Gamma J}_{a,\omega_\Gamma}
    + \norm{R}_{\omega_\Gamma} \norm{ \psi_\Gamma J }_{\omega_\Gamma} \right].
$$
Now we use estimates $\norm{ \psi_\Gamma J }_{a,\omega_\Gamma} \leq C h_\Gamma^{-1/2} \norm{ J }_\Gamma$
and $\norm{ \psi_\Gamma J }_{\omega_\Gamma} \leq C h_\Gamma^{1/2} \norm { J }_\Gamma$,
which follow from the equivalence of the energy norm with the $H^1$ norm and from \eqref{eq:psigammanorm2}--\eqref{eq:psigammanorm3},
and we obtain
$$
  \norm{J}_\Gamma \leq C \left[ h_\Gamma^{-1/2} \norm{w}_{a,\omega_\Gamma}
      + h_\Gamma^{1/2} \norm{ R }_{\omega_\Gamma} \right].
$$
Estimate \eqref{eq:normRest} finally yields
\begin{equation}
  \label{eq:normJest}
  \norm{J}_\Gamma \leq C h_\Gamma^{-1/2} \norm{ w }_{a,\omega_\Gamma}.
\end{equation}
Combination of \eqref{eq:normRest} and \eqref{eq:normJest} finishes the proof.
\end{proof}

Note that the approximate eigenpair $\lamTf$, $\uTf$ in Lemma~\ref{le:effclasres} need not be given by \eqref{eq:discreigp}.

\section{Local flux reconstruction and the error estimator}
\label{se:errest}

In this section we define the error indicators and the corresponding error estimator based on an $\Hdiv$ reconstruction of the flux $\cA \nabla \uTf$.
In contrast to the classical residual estimator, the estimator based on this flux reconstruction provides a fully computable upper bound on $\| w \|_a$. This enables to compute lower bounds on the exact eigenvalues $\lambda_i$, see Theorem~\ref{th:lowerbound} below.
Technically, we use the flux reconstruction proposed in \cite{BraSch:2008} for source problems and use it for eigenvalue problems of type \eqref{eq:eigp}. This approach is local and efficient, because it is based on solving small problems on patches of elements.

We continue to consider $\lamTf\in\R$ and $\uTf\in\VT$, $i\geq 1$, to be a fixed approximate eigenpair.
First, we introduce the error indicators $\eta_K$ and the resulting error estimator $\eta$ as
\begin{equation}
\label{eq:etaK}
  \eta_K = \| \nabla \uTf - \cA^{-1}\tqT \|_{\cA,K} \quad \forall K\in\cT
  \quad\text{and}\quad
  \eta^2 = \sum_{K\in\cT} \eta_K^2,
\end{equation}
where the norm is defined by $\| \tq \|_{\cA,K}^2 = (\cA \tq, \tq)_K$ and $\tqT \in \Hdiv$ is the local flux reconstruction.

The description of the local flux reconstruction $\tqT$ and proofs of its properties are technical and they were inspired mainly by works \cite{Ern_Voh_adpt_IN_13} and \cite{Dol_Ern_Vohr_hp_refinement_strategies_polyn_rob_AEE}.
The flux reconstruction $\tqT$ is naturally defined in the Raviart--Thomas finite element spaces. Therefore, we first review their properties, see e.g.~\cite{BreFor:1991} and \cite{Quar_Val_Num_appr_PDE_94},
and introduce the notation. Then we define the flux reconstruction $\tqT$ and prove several lemmas and a theorem.

The Raviart--Thomas space of order $p$ is defined on the mesh $\cT$ as
\begin{equation}\label{RTN}
\bWT = \left\{  \bwT \in \Hdiv:
    \bwT|_K \in \RT(K) \quad \forall K \in \cT
  \right \},
\end{equation}
where $\RT(K) = [P_p(K)]^2 \oplus \boldsymbol{x}P_p(K)$ and $\boldsymbol{x} = (x_1,x_2)$ is the vector of coordinates.
Functions $\bwT \in \bWT$ have continuous normal components across the internal edges
and $\ddiv \bwT|_K \in P_p(K)$ for all $K \in \cT$.
In addition, the normal components of $\bwT$ on edges $\Gamma$ span the whole space $P_p(\Gamma)$ of polynomials of degree at most $p$ on $\Gamma$ and we have
\begin{equation}
\label{eq:normcomp=Pp}
\left\{ \bwT|_\Gamma {\cdot} \tn_\Gamma : \bwT \in \bWT \right\} = P_{p}(\Gamma)
\quad \forall \Gamma \in \cET.
\end{equation}

We introduce the notation for vertices (nodes) of the mesh $\cT$.
Let $\NT$ denote the set of all vertices in $\cT$. The subsets of those lying on $\oGammaD$, on $\GammaN$, and in the interior of $\Omega$ are denoted by $\NTD$, $\NTN$, and $\NTI$, respectively. Notice that if a vertex is located at the interface between the Dirichlet and Neumann boundary, it is not in $\NTN$, but only in $\NTD$.
We also denote by $\NTK$ and $\cETK$ the sets of three vertices and three edges of the element $K$, respectively.

We construct the flux reconstruction $\tqT \in \bWT$ by solving local Neumann and Neumann/Dirichlet mixed finite element problems defined on patches of elements sharing a given vertex.
Let $\ta \in \NT$ be an arbitrary vertex, we denote by $\HF$ the standard piecewise linear and continuous hat function associated with $\ta$. This function vanishes at all vertices of $\cT$ except of $\ta$, where it has value 1. Note that $\HF \in \VT$ for vertices $\ta \in \NTI \cup \NTN$, but $\HF \not\in \VT$ for $\ta \in \NTD$.
Further, let $\Ta = \{ K \in \cT : \ta \in K \}$ be the set of elements sharing the vertex~$\ta$ and
$\Oma = \interior\bigcup \{ K: K \in \Ta \}$ the patch of elements sharing the vertex $\ta$.
We denote by $\cEaI$ the set of interior edges in the patch $\Oma$,
by $\cEaBE$ the set of those edges on the boundary $\poma$ that do not contain $\ta$,
and by $\cEaBD$ and $\cEaBN$ the sets of edges on the boundary $\poma$ with an end point at $\ta$ lying either on $\GammaD$ or on $\GammaN$, respectively. Note that sets $\cEaBD$ and $\cEaBN$ can be nonempty only if $\ta \in \NTD \cup \NTN$, i.e. for boundary patches.

We also introduce auxiliary quantities
\begin{align}
\label{eq:ra}
\rTa &= \lamTf \beta_1 \HF  \uTf - c \HF \uTf - (\cA \nabla \HF) \cdot \nabla \uTf,
\\
\label{eq:ga}
\gTa &= \lamTf \beta_2 \HF \uTf - \alpha \HF \uTf.
\end{align}
Note that these quantities are defined in such a way that
\begin{equation}
  \label{eq:ragN}
  a(\uTf, \HF) - \lamTf b(\uTf,\HF) = - \int_{\Oma} \rTa \dx - \int_{\cEaBN} \gTa \dx[s]
  \quad\forall \ta\in\NT.
\end{equation}

The local flux reconstruction is defined in the Raviart--Thomas spaces on patches $\Oma$ with suitable
boundary conditions. We introduce the space
\begin{multline}
\label{eq:Wa0}
\bWa^0 = \left\{
    \bwT \in \Hdiv[\Oma] : \bwT|_K \in \RT(K) \quad\forall K \in \Ta
    \right. \\ \left.
    \quad\text{and}\quad
    \bwT \cdot \tn_{\Gamma} =
      0 \text{ on edges } \Gamma \in \cEaBE \cup \cEaBN
  \right\},
\end{multline}
and the affine set
\begin{multline}
  \label{eq:Wa}
  \bWa = \left\{
    \bwT \in \Hdiv[\Oma] : \bwT|_K \in \RT(K) \quad\forall K \in \Ta, \quad
    \bwT \cdot \tn_{\Gamma} = 0 
    \right. \\ \left. 
    \text{ on edges } \Gamma \in \cEaBE 
    \text{ and }
    \bwT \cdot \tn_{\Gamma} = \Pi_{\Gamma}(\gTa) \text{ on edges } \Gamma \in \cEaBN
  \right\}.
\end{multline}
The symbol $\Pi_{\Gamma}$ stands for the $L^2(\Gamma)$-orthogonal projection onto the space $P_p(\Gamma)$ of polynomials of degree at most $p$ on the edge $\Gamma$.
We also define the space $P_p(\Ta) = \{ \vT \in L^2(\Oma) : \vT|_K \in P_p(K) \ \forall K\in\Ta \}$ of piecewise polynomial and in general discontinuous functions.
Further, we introduce the space
\begin{equation} \label{eq:Ppast}
  \PpTast = \left\{ \begin{array}{ll}
    \{ \vT \in P_p(\Ta): \int_{\Oma} \vT \dx = 0 \}, \quad \text{for} \ \ta \in \NTI \cup \NTN, \\
    P_p(\Ta) , \quad \text{for} \ \ta \in \NTD.
    \end{array} \right.
\end{equation}

Using these spaces, we define the flux reconstruction $\tqT \in \bWT$ as the sum
\begin{equation} \label{eq:defq}
  \tqT = \sum_{\ta \in \NT} \tqa,
\end{equation}
where $\tqa \in \bWa$ together with $\da \in \PpTast$ solves the mixed finite element problem
\bse \label{min_prob_qa}
\begin{alignat}{2}
       (\cA^{-1} \tqa, \bwT)_{\Oma} - (\da, \ddiv \bwT)_{\Oma} &= (\HF \nabla \uTf, \bwT)_{\Oma}
         &\quad& \forall \bwT \in \bWa^0, \label{min_prob_qa_1}\\
       - (\ddiv \tqa, \vT)_{\Oma} &= (\rTa, \vT)_{\Oma} &\quad &\forall \vT \in \PpTast. \label{min_prob_qa_2}
\end{alignat}
\ese
Let us note that this mixed finite element problem is equivalent to the minimization of
$\left\|\HF \cA^{\frac{1}{2}} \nabla \uTf - \cA^{-\frac{1}{2}} \tsa \right\|_{\Oma}$
over all $\tsa \in \bWa$ satisfying the constraint $-\ddiv \tsa = \Pi_{p}(\rTa)$ in $\Oma$,
where $\Pi_{p}$ denotes the $L^2(\Oma)$-orthogonal projection onto $P_p(\Ta)$.



In order to show that the fluxes $\tqa$ are well defined and that problem \eqref{min_prob_qa} is uniquely solvable, we recall a regularity result for the solution of the Poisson equation in polygonal domains, see e.g.~\cite{Grisvard_prob_lim_polyg_86}. More precisely, we show that the gradient of the solution of the Poisson equation with Dirichlet,  Neumann, or mixed boundary conditions lies in $L^p(\Omega)$ for some $p > 2$. 
This result together with the continuous inf-sup condition enables us to present the validity of the associated discrete inf-sup condition.
\begin{lemma}
\label{le:regul}
Let $\Omega$ be a polygon. Let $\psi$ be a solution of the Poisson equation with a right-hand side in $L^2(\Omega)$ and with Dirichlet, Neumann, or mixed boundary conditions. Then $\nabla \psi \in [L^p(\Omega)]^2$ for certain $p > 2$.
\end{lemma}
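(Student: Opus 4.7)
The strategy is to invoke the classical Grisvard regularity theory for second-order elliptic problems on polygonal domains. Full $H^2$ regularity can fail at reentrant corners and at points where Dirichlet and Neumann conditions meet, but the solution always admits a corner-singularity decomposition, and each singular term lies in $W^{1,p}$ for some $p>2$ depending only on the geometry and the labeling of the boundary pieces.

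First I would invoke the decomposition: for each vertex $\ta_j$ of $\Omega$ with interior angle $\omega_j$, one can write
\begin{equation*}
\psi = \psi_{\mathrm{reg}} + \sum_{j} c_j\,\chi_j\, S_j,
\end{equation*}
where $\psi_{\mathrm{reg}}\in H^2(\Omega)$, $\chi_j$ is a smooth cutoff supported near $\ta_j$, and the model singular function reads $S_j = r_j^{\alpha_j}\Phi_j(\theta_j)$ in local polar coordinates at $\ta_j$. The characteristic exponent $\alpha_j>0$ is $\pi/\omega_j$ when the two sides at $\ta_j$ carry the same boundary condition (both Dirichlet or both Neumann) and $\pi/(2\omega_j)$ in the mixed case; this is exactly the content of Grisvard's treatment of second-order problems with mixed boundary data on polygons, which is the reference already cited in the statement.

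Next I would quantify the $L^p$-integrability of the singular parts. Since $\psi_{\mathrm{reg}}\in H^2(\Omega)$, we have $\nabla\psi_{\mathrm{reg}}\in [L^q(\Omega)]^2$ for every $q<\infty$ by Sobolev embedding in two dimensions, so only the singular terms matter. The bound $|\nabla S_j|\lesssim r_j^{\alpha_j-1}$ gives
\begin{equation*}
\int_{B_\rho(\ta_j)} |\nabla S_j|^p\,\mathrm{d}x \;\lesssim\; \int_0^\rho r^{p(\alpha_j-1)+1}\,\mathrm{d}r,
\end{equation*}
which is finite precisely when $p<2/(1-\alpha_j)$, and this upper bound exceeds $2$ for every $\alpha_j>0$. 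Since $\Omega$ has finitely many vertices and each $\alpha_j$ is strictly positive (the worst case being a mixed-type corner with $\omega_j$ close to $2\pi$, giving $\alpha_j$ close to $1/4$ and $p<8/3$), there exists a single $p>2$ such that $\nabla S_j\in [L^p(\Omega)]^2$ for all $j$. Combining with the regular part gives $\nabla\psi\in[L^p(\Omega)]^2$.

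The only delicate point is making sure that the cited decomposition is indeed available for \emph{all} admissible combinations of Dirichlet and Neumann segments meeting at a vertex, including cases in which the Dirichlet or Neumann part of the boundary is empty. This is exactly what Grisvard's analysis of second-order problems on polygons covers, so the proof reduces to citing that result and observing that the corresponding singularity exponents all satisfy $\alpha_j>0$.
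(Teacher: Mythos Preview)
Your argument is correct. Both you and the paper ultimately rely on Grisvard's corner-regularity analysis and arrive at the same numerology (worst case $\alpha_j$ near $1/4$, yielding $p$ just below $8/3$), but the route differs. The paper does not invoke the singular-function decomposition explicitly; instead it quotes the $H^s$-regularity statement from Grisvard---namely $\psi\in H^s(\Omega)$ for all $s<1+\pi/\omega_j$ at same-type corners and $s<1+\pi/(2\omega_j)$ at mixed corners---takes the global minimum $s\in(1,5/4)$, and then applies the fractional Sobolev embedding $H^t(\Omega)\hookrightarrow L^{2/(1-t)}(\Omega)$ to $\nabla\psi\in[H^{s-1}(\Omega)]^2$. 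Your approach is more explicit and avoids fractional Sobolev spaces altogether: you split off the $H^2$ regular part (whose gradient lies in every $L^q$ by the standard $H^1\hookrightarrow L^q$ embedding in two dimensions) and compute the $L^p$-integrability of $r^{\alpha_j-1}$ directly. The paper's version is shorter and cites a single embedding; yours is more elementary and makes the dependence of $p$ on the geometry transparent without passing through $H^s$ with non-integer $s$.
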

\begin{proof}
The solution $\psi$ is well known to be smooth with the exception of 
neighbourhoods of vertices of the polygon $\Omega$ and points where the type of boundary conditions changes.
Let us consider such a point and the angle $0 < \theta \leq 2\pi$ between the two adjacent sides. 
In the vicinity of this point, the solution $\psi$ lies in $H^s(\Omega)$ for 
all $s < 1+\pi/\theta$ provided there is the same type of boundary conditions prescribed on both adjacent sides \cite{Grisvard_prob_lim_polyg_86}.
If the Dirichlet boundary condition is prescribed on one of these sides and the Neumann boundary condition on the other side then $\psi$ lies in $H^s(\Omega)$ for all $s < 1+\pi/(2\theta)$, see \cite{Grisvard_prob_lim_polyg_86}.

In particular, we conclude that $\psi \in H^s(\Omega)$ for $s \in (1,5/4)$ in all cases.
Thus, $\nabla \psi \in [H^t(\Omega)]^2$ for $t \in (0,1/4)$.
To finish the proof, we notice that 
the Sobolev embedding theorem for fractional orders in two-dimensions yields 
$H^t(\Omega) \subseteq L^{t^*}(\Omega)$
for $t^* = 2/(1-t)$. 
Since $t \in (0,1/4)$, we have $t^* > 2$ and the proof is complete.
\end{proof}

The following lemma introduces the inf-sup condition, which is formulated in terms of spaces
%
\begin{align*}
\HdivO[\Oma] &= \left\{ \bw \in \Hdiv[\Oma] : \bw \cdot \tn = 0 \text{ on edge } 
  \Gamma \in \cEaBE \cup \cEaBN
 \right\},
\\
Z^*(\Oma) &= \left\{ 
  \begin{array}{ll}
    \{ v \in L^2(\Oma): \int_{\Oma} v \dx = 0 \}, \quad \text{for} \ \ta \in \NTI \cup \NTN, \\
    L^2(\Oma), \quad \text{for} \ \ta \in \NTD.
  \end{array}
\right.
\end{align*}
\begin{lemma} \label{lem_inf-sup_continuous}
   Let $\bW = \HdivO[\Oma] \cap [L^s(\Oma)]^2$, $s > 2$. 
   Then there exists a constant $\beta > 0$ such that
   \begin{align}\label{inf-sup_continuous}
      \sup_{\ttau\in \bW}
    \frac{(\varphi, \ddiv \ttau)_{\Oma}}{\|\ttau\|_{\bW}}
    \ge \beta \|\varphi\|_{\Oma} \quad
    \forall \varphi \in Z^*(\Oma).
   \end{align}
\end{lemma}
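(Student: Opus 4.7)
The plan is to construct, for each $\varphi \in Z^*(\Oma)$, an explicit $\ttau \in \bW$ with $\ddiv \ttau = \varphi$ and $\|\ttau\|_{\bW} \le C \|\varphi\|_{\Oma}$. Granted this, the inf-sup inequality is immediate: $(\varphi, \ddiv\ttau)_{\Oma} = \|\varphi\|_{\Oma}^{2}$, so the supremum in \eqref{inf-sup_continuous} is at least $\|\varphi\|_{\Oma}^2/\|\ttau\|_{\bW} \ge (1/C)\|\varphi\|_{\Oma}$, giving $\beta = 1/C$.

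To build $\ttau$, I would solve the auxiliary Poisson problem on the polygonal patch $\Oma$: find $\psi$ with $\Delta \psi = \varphi$ in $\Oma$, $\partial_{\tn}\psi = 0$ on the edges in $\cEaBE \cup \cEaBN$, and $\psi = 0$ on the edges in $\cEaBD$. For $\ta \in \NTI \cup \NTN$ the set $\cEaBD$ is empty, so this is a pure Neumann problem; the compatibility condition $\int_{\Oma}\varphi\dx = 0$ is exactly the definition of $Z^*(\Oma)$ in this case, so a solution exists (uniquely modulo constants). For $\ta \in \NTD$ the portion $\cEaBD$ is nonempty, which pins down $\psi$ uniquely without a compatibility condition. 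Setting $\ttau = \nabla \psi$ then gives $\ddiv \ttau = \Delta \psi = \varphi$ and $\ttau \cdot \tn = \partial_{\tn}\psi = 0$ on $\cEaBE \cup \cEaBN$, so $\ttau \in \HdivO[\Oma]$.

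The estimate splits into two parts. The $L^2$-bound $\|\ttau\|_{\Oma} \le C\|\varphi\|_{\Oma}$ is a standard energy estimate for $\psi$ (Poincar\'e on $Z^*(\Oma)$ for the Neumann case, Friedrichs on the Dirichlet portion for the mixed case), combined with $\|\ddiv\ttau\|_{\Oma} = \|\varphi\|_{\Oma}$. The higher integrability is precisely the content of Lemma~\ref{le:regul}: it yields $\nabla\psi \in [L^p(\Oma)]^2$ for some $p > 2$ depending only on the geometry of $\Oma$ and on the partition of $\poma$ into Dirichlet and Neumann parts. Since the solution operator $\varphi \mapsto \psi$ is a bounded linear map from $Z^*(\Oma)$ into the appropriate $H^1$-space whose image Lemma~\ref{le:regul} places in $W^{1,p}(\Oma)$, the closed graph theorem upgrades this to the quantitative bound $\|\nabla\psi\|_{L^p(\Oma)} \le C\|\varphi\|_{\Oma}$. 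Choosing any $s \in (2,p]$ gives $\ttau \in [L^s(\Oma)]^2$ with $\|\ttau\|_{L^s(\Oma)} \le C\|\varphi\|_{\Oma}$, and combining with the $\Hdiv[\Oma]$-bound yields the required control of $\|\ttau\|_{\bW}$.

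The main obstacle is passing from the qualitative regularity of Lemma~\ref{le:regul} to a quantitative $L^p$ bound with a constant independent of $\varphi$; as noted above, this is handled cleanly by the closed graph theorem applied to the solution operator, using that $p$ in Lemma~\ref{le:regul} depends only on the angles at the corners of $\Oma$ and the boundary partition, not on the right-hand side. A minor additional point is to check that the Neumann/mixed Poisson problem on the (in general nonconvex) patch $\Oma$ fits the framework of \cite{Grisvard_prob_lim_polyg_86} invoked in Lemma~\ref{le:regul}; this is standard once one observes that the re-entrant corners and the junctions of Dirichlet/Neumann edges on $\poma$ are finite in number and locally planar.
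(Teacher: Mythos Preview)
Your proposal is correct and follows essentially the same route as the paper: solve the auxiliary Poisson problem on the patch $\Oma$ with Neumann data on $\cEaBE\cup\cEaBN$ and Dirichlet data on $\cEaBD$, set $\ttau=\nabla\psi$, and invoke Lemma~\ref{le:regul} for the $L^s$ integrability. Your explicit use of the closed graph theorem to pass from the qualitative statement of Lemma~\ref{le:regul} to a quantitative bound $\|\nabla\psi\|_{L^p(\Oma)}\le C\|\varphi\|_{\Oma}$ is in fact a cleaner justification of that step than the paper provides; the paper simply cites the $L^2$ stability $\|\nabla\psi\|_{\Oma}\le C\|\varphi\|_{\Oma}$ and leaves the $L^s$ part implicit.
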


\begin{proof}
  Let $\ta \in \NT$ be a vertex and $\varphi\in Z^*(\Oma)$ arbitrary, but fixed. 
  Consider Poisson equation $-\Delta \psi = \varphi$ in $\Oma$ with
  homogeneous Neumann boundary conditions on edges $\Gamma \in \cEaBE \cup \cEaBN$
  and with homogeneous Dirichlet boundary conditions on edges $\Gamma \in \cEaBD$.
  Notice that if $\ta \in \NTD$ then the boundary conditions are mixed Dirichlet--Neumann
  and the Poisson problem is well posed.
  If $\ta \in \NTI \cup \NTN$ then the boundary conditions are pure Neumann,
  but the problem is still solvable, because $\int_{\Oma} \varphi \dx = 0$.
  Among all solutions of this Neumann problem, we consider the one satisfying $\int_{\Oma} \psi \dx = 0$.

%
%
   Lemma~\ref{le:regul} implies $\nabla\psi \in \bW$ and we can take
   $\ttau = - \nabla \psi$ in the left-hand side of \eqref{inf-sup_continuous}. We obtain
   \begin{multline*}
   \sup_{\ttau\in \bW} \frac{(\varphi, \ddiv \ttau)_{\Oma}}{\|\ttau\|_{\bW}}
    \geq \frac{\|\varphi\|_{\Oma}^2}{\|\varphi\|_{\Oma} + \|\nabla \psi\|_{\Oma} + \|\nabla \psi\|_{L^s(\Oma)}} 
    \\
    \geq \frac{\|\varphi\|_{\Oma}^2}{\|\varphi\|_{\Oma} + 2\|\nabla \psi\|_{\Oma}} 
    \geq \frac{1}{1 + 2C} \|\varphi\|_{\Oma},
   \end{multline*}
   where we have used the stability of the Poisson problem $\|\nabla \psi\|_{\Oma} \leq C \|\varphi\|_{\Oma}$.
\end{proof}

\begin{lemma} \label{lem_inf-sup_discrete}
    There exists $\beta > 0$ such that the following discrete inf-sup condition holds:
    \begin{equation}\label{disc_inf-sup}
       \sup_{\ttauT\in \bWa^0}
       \frac{ (\varphiT, \ddiv \ttauT)_{\Oma}}{\|\ttauT\|_{\HdivO[\Oma]}}
       \ge \beta \|\varphiT\|_{L^2(\Oma)}
       \quad
       \forall \varphiT \in \PpTast.
    \end{equation}
\end{lemma}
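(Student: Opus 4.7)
The plan is to deduce the discrete inf-sup condition from the continuous one (Lemma~\ref{lem_inf-sup_continuous}) by means of Fortin's lemma. Concretely, I would construct a linear operator $\Pi^{\mathrm{F}}: \bW \to \bWa^0$ satisfying the two Fortin properties
\begin{equation*}
   (\varphi^\cT, \ddiv(\ttau - \Pi^{\mathrm{F}}\ttau))_{\Oma}
     = 0
   \quad \forall \varphi^\cT \in \PpTast,\ \forall \ttau \in \bW,
   \qquad
   \|\Pi^{\mathrm{F}}\ttau\|_{\Hdiv[\Oma]} \le C_{\mathrm{F}}\|\ttau\|_{\bW},
\end{equation*}
with $C_{\mathrm{F}}$ depending only on the shape regularity constant $C_{\mathrm{s}}$. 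Granting these, for any $\varphi^\cT \in \PpTast$ Lemma~\ref{lem_inf-sup_continuous} supplies $\ttau \in \bW$ with $(\varphi^\cT,\ddiv\ttau)_{\Oma} \ge \beta\|\varphi^\cT\|_{\Oma}\|\ttau\|_{\bW}$; then
\begin{equation*}
   \frac{(\varphi^\cT,\ddiv(\Pi^{\mathrm{F}}\ttau))_{\Oma}}{\|\Pi^{\mathrm{F}}\ttau\|_{\Hdiv[\Oma]}}
   = \frac{(\varphi^\cT,\ddiv\ttau)_{\Oma}}{\|\Pi^{\mathrm{F}}\ttau\|_{\Hdiv[\Oma]}}
   \ge \frac{\beta}{C_{\mathrm{F}}}\|\varphi^\cT\|_{\Oma},
\end{equation*}
which is the desired discrete inf-sup bound.

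For $\Pi^{\mathrm{F}}$ I would take the canonical Raviart--Thomas interpolant of order $p$ on the patch $\Oma$. Because we restricted $\bW$ to $[L^s(\Oma)]^2$ with $s>2$ (this is exactly why Lemma~\ref{le:regul} was invoked in the continuous inf-sup proof), the normal components $\ttau\cdot\tn_\Gamma$ are well defined on every edge $\Gamma$ in $L^q(\Gamma)$ for some $q>2$, so the standard degrees of freedom of $\RT(K)$ (edge moments against $P_p(\Gamma)$ and interior moments against $[P_{p-1}(K)]^2$) are applicable. By the standard commuting diagram property of this interpolant,
\begin{equation*}
  \ddiv(\Pi^{\mathrm{F}}\ttau)\big|_K = \Pi_p(\ddiv\ttau)\big|_K \quad\forall K\in\Ta,
\end{equation*}
where $\Pi_p$ is the $L^2$-orthogonal projection onto $P_p(\Ta)$. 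Since $\varphi^\cT \in \PpTast \subset P_p(\Ta)$, this gives the first Fortin property. The zero-normal-trace condition defining $\bWa^0$ on edges in $\cEaBE\cup\cEaBN$ is preserved because the edge degrees of freedom of $\Pi^{\mathrm{F}}\ttau$ are $L^2$-projections of $\ttau\cdot\tn_\Gamma$, which vanish by $\ttau\in\HdivO[\Oma]$.

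The main obstacle is the stability bound $\|\Pi^{\mathrm{F}}\ttau\|_{\Hdiv[\Oma]}\le C_{\mathrm{F}}\|\ttau\|_{\bW}$ with a constant depending only on shape regularity. I would obtain it elementwise by scaling to a reference triangle: on the reference element the interpolation is a bounded linear operator from $[L^s(\hat K)]^2$ to $\RT(\hat K)$ because edge normal traces live in $L^q(\hat\Gamma)\subset L^2(\hat\Gamma)$ for $s>2$; pulling back through the Piola transform, one controls $\|\Pi^{\mathrm{F}}\ttau\|_K$ by $\|\ttau\|_{L^s(K)}$ with a constant depending only on $h_K/\rho_K$, and controls $\|\ddiv\Pi^{\mathrm{F}}\ttau\|_K = \|\Pi_p\ddiv\ttau\|_K \le \|\ddiv\ttau\|_K$ by $L^2$-stability of $\Pi_p$. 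Summing over $K\in\Ta$ and using that the cardinality of $\Ta$ is uniformly bounded (by shape regularity) gives $\|\Pi^{\mathrm{F}}\ttau\|_{\Hdiv[\Oma]}\lesssim \|\ttau\|_{\Hdiv[\Oma]}+\|\ttau\|_{L^s(\Oma)} = \|\ttau\|_{\bW}$, completing the verification.
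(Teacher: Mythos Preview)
Your approach is essentially the same as the paper's: the paper deduces the discrete inf-sup condition from Lemma~\ref{lem_inf-sup_continuous} via Fortin's criterion (citing \cite[Proposition~5.4.3]{Boffi_Brezzi_Fortin_MFEMs_applications_13}) using the canonical Raviart--Thomas interpolation operator, whose relevant boundedness on $[L^s]^2\cap\Hdiv$ with $s>2$ is exactly \cite[Proposition~2.5.2]{Boffi_Brezzi_Fortin_MFEMs_applications_13}. Your write-up simply unpacks these citations; the one imprecision is the claim that $\ttau\cdot\tn_\Gamma$ lies in $L^q(\Gamma)$ for $q>2$ --- what is actually used (and proved in the cited reference) is that for $\ttau\in[L^s(K)]^2$ with $s>2$ and $\ddiv\ttau\in L^2(K)$ the edge moments against polynomials are well defined and bounded, which suffices for the Fortin properties.
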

\begin{proof}
   The statement follows from Lemma~\ref{lem_inf-sup_continuous}, \cite[Proposition 5.4.3]{Boffi_Brezzi_Fortin_MFEMs_applications_13} and properties of the interpolation operator shown in \cite[Proposition 2.5.2]{Boffi_Brezzi_Fortin_MFEMs_applications_13}.
\end{proof}

\begin{lemma}
    Problem~\eqref{min_prob_qa} has a unique solution.
\end{lemma}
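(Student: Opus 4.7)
The plan is to recast Problem \eqref{min_prob_qa} as a standard mixed finite-element saddle-point system on the patch $\Oma$ and invoke Brezzi's theorem. I first observe that the affine set $\bWa$ is nonempty: the prescribed normal traces $\Pi_\Gamma \gTa \in P_p(\Gamma)$ on edges $\Gamma \in \cEaBN$ (and zero on edges $\Gamma \in \cEaBE$) are realizable by a suitable Raviart--Thomas function in view of \eqref{eq:normcomp=Pp}, so one may fix some $\tqa^* \in \bWa$. Writing $\tqa = \tqa^* + \tqa^0$ with $\tqa^0 \in \bWa^0$ reduces \eqref{min_prob_qa} to finding $(\tqa^0, \da) \in \bWa^0 \times \PpTast$ solving the homogeneous-Neumann mixed system with modified right-hand sides absorbing the terms involving $\tqa^*$.

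The second step is to verify Brezzi's two structural conditions on the reduced problem. The discrete inf-sup condition for the pairing $(\vT, \ddiv \bwT)_{\Oma}$ over $\bWa^0 \times \PpTast$ is precisely Lemma~\ref{lem_inf-sup_discrete}. For coercivity, I analyze the kernel $Z = \{\bwT \in \bWa^0 : (\vT, \ddiv \bwT)_{\Oma} = 0 \ \forall \vT \in \PpTast\}$. For $\ta \in \NTD$ one has $\PpTast = P_p(\Ta)$ and $\ddiv \bwT \in P_p(\Ta)$, so the orthogonality condition forces $\ddiv \bwT = 0$ directly. For $\ta \in \NTI \cup \NTN$, it forces $\ddiv \bwT$ to be constant on $\Oma$; but the boundary condition $\bwT \cdot \tn = 0$ on $\poma$ built into $\bWa^0$, combined with the divergence theorem, gives $\int_{\Oma} \ddiv \bwT \dx = 0$, so that constant vanishes. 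In either case $Z$ consists of divergence-free functions, the $\Hdiv[\Oma]$-norm reduces to the $L^2(\Oma)$-norm on $Z$, and uniform positive definiteness of $\cA$ yields coercivity of $(\cA^{-1} \cdot, \cdot)_{\Oma}$.

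With both Brezzi conditions in hand, the reduced problem admits a unique solution $(\tqa^0, \da) \in \bWa^0 \times \PpTast$ for any bounded right-hand sides, and consequently $(\tqa, \da) = (\tqa^* + \tqa^0, \da) \in \bWa \times \PpTast$ solves \eqref{min_prob_qa}. Uniqueness in the original affine space, and independence of the solution from the auxiliary choice of $\tqa^*$, both follow by subtracting two putative solutions and applying the argument above to the homogeneous problem. The main bookkeeping step is the case split between $\ta \in \NTD$ and $\ta \in \NTI \cup \NTN$ in the kernel analysis; the genuine analytical content is already contained in Lemma~\ref{lem_inf-sup_discrete}, so no real obstacle remains once that lemma is invoked.
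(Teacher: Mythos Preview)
Your proof is correct and relies on the same key ingredient as the paper, namely the discrete inf--sup condition of Lemma~\ref{lem_inf-sup_discrete}, but the overall route differs slightly. The paper, after the same affine-to-linear reduction, exploits finite dimensionality directly: the homogeneous system is square, so it suffices to show the kernel is trivial. Testing with $\bwT = \tqaO$ and $\vT = \daO$ and subtracting gives $(\cA^{-1}\tqaO,\tqaO)_{\Oma}=0$, hence $\tqaO=\tO$; then the inf--sup condition applied to \eqref{min_prob_qaO_1} forces $\daO=0$. Your approach instead verifies Brezzi's two hypotheses, which requires the additional kernel analysis (the case split showing $Z$ consists of divergence-free fields) before coercivity follows. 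Both arguments are short; the paper's is marginally more economical because the square-matrix trick bypasses the explicit kernel characterization, while yours is the more standard saddle-point template and would carry over unchanged to an infinite-dimensional setting.
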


\begin{proof}
    Let $\ttqa \in \bWa$ be arbitrary and let $\tqaO = \tqa - \ttqa$. Then the problem~\eqref{min_prob_qa} is equivalent to the problem of finding $\tqaO \in \bWa^0$ and $\daO \in \PpTast$ such that
    \bse \label{min_prob_qaO}
%
%
    \begin{align}
       (\cA^{-1} \tqaO, \bwT)_{\Oma} - (\daO, \ddiv \bwT)_{\Oma} &= (\HF \nabla \uTf, \bwT)_{\Oma} - (\cA^{-1} \ttqa, \bwT)_{\Oma}
       \label{min_prob_qaO_1}
       \\
       - (\ddiv \tqaO, \vT)_{\Oma} &= (\rTa, \vT)_{\Oma} + (\ddiv \ttqa, \vT)_{\Oma} 
       \label{min_prob_qaO_2}
    \end{align}
    \ese
    for all $\bwT \in \bWa^0$ and all $\vT \in \PpTast$.
    %


    Equations \eqref{min_prob_qaO} correspond to a linear algebraic system with a square matrix. Therefore, it is sufficient to show that $\HF \nabla \uTf = 0$, $\ttqa =0$ together with $\rTa = 0$ implies that the only possible solution of system \eqref{min_prob_qaO} is $\tqaO = \tO$ and $\daO=0$. Taking $\bwT = \tqaO$ in~\eqref{min_prob_qaO_1} and $\vT = \daO$ in~\eqref{min_prob_qaO_2} and subtracting the equation \eqref{min_prob_qaO_2} from \eqref{min_prob_qaO_1} yields $( \mathcal{A}^{-1} \tqaO, \tqaO )_{\Oma} = 0$ and thus the only possibility is $\tqaO = \tO$. Now, equality \eqref{min_prob_qaO_1} implies
    \begin{equation}
      \label{eq:dadivv}
      (\daO, \ddiv \bwT)_{\Oma} = 0 \quad \forall\bwT \in \bWa^0.
    \end{equation}
    By combining \eqref{eq:dadivv} and \eqref{disc_inf-sup} with $\varphiT = \daO$, we conclude that the only possibility is $\daO = 0$.
%
\end{proof}

Next lemma shows that we can actually test \eqref{min_prob_qa_2} by any polynomial.
\begin{lemma}
Let $\tqa \in \bWa$ and $\da \in \PpTast$ be a solution of problem \eqref{min_prob_qa}. Then
\begin{equation}
\label{eq:testPp}
- (\ddiv \tqa, \vT)_{\Oma} = (\rTa, \vT)_{\Oma} \quad \forall \vT \in \PpT.
\end{equation}
\end{lemma}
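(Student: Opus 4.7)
The statement extends the test-function space in \eqref{min_prob_qa_2} from $\PpTast$ to the full space $\PpT$. For Dirichlet vertices $\ta \in \NTD$ we have $\PpTast = \PpT$ by \eqref{eq:Ppast}, so the claim is immediate. For $\ta \in \NTI \cup \NTN$, $\PpTast$ consists of those functions in $\PpT$ with vanishing integral over $\Oma$, and $\PpT = \PpTast \oplus \{\text{constants}\}$. By linearity, the only nontrivial case is therefore testing by the constant function $\vT = 1$.

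The plan is to evaluate both sides for $\vT = 1$ separately. For the left-hand side I would apply the divergence theorem:
\begin{equation*}
  -(\ddiv \tqa, 1)_{\Oma} = -\int_{\poma} \tqa \cdot \tn \dx[s].
\end{equation*}
The contributions from edges in $\cEaBE$ vanish because the definition of $\bWa$ in \eqref{eq:Wa} enforces $\tqa \cdot \tn_\Gamma = 0$ there. If $\ta \in \NTI$, the patch boundary contains no further edges and the integral is zero. If $\ta \in \NTN$, the remaining contribution comes from $\cEaBN$, where $\tqa \cdot \tn_\Gamma = \Pi_\Gamma(\gTa)$; since the constant $1$ lies in $P_p(\Gamma)$, the $L^2$-orthogonality of $\Pi_\Gamma$ gives $\int_\Gamma \Pi_\Gamma(\gTa)\dx[s] = \int_\Gamma \gTa \dx[s]$, so in total $-(\ddiv \tqa, 1)_{\Oma} = -\int_{\cEaBN} \gTa \dx[s]$.

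For the right-hand side I would exploit that $\HF \in \VT$ whenever $\ta \in \NTI \cup \NTN$. Substituting $\vT = \HF$ in the discrete eigenvalue equation \eqref{eq:discreigp} yields $a(\uTf,\HF) - \lamTf b(\uTf,\HF) = 0$, and then identity \eqref{eq:ragN} gives
\begin{equation*}
  \int_{\Oma} \rTa \dx + \int_{\cEaBN} \gTa \dx[s] = 0,
\end{equation*}
so that $(\rTa, 1)_{\Oma} = -\int_{\cEaBN} \gTa \dx[s]$, matching the expression obtained above. Combining both cases produces \eqref{eq:testPp}.

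The main potential pitfall is accounting correctly for the boundary edges on each patch type and noticing that the result hinges on two orthogonality ingredients working in tandem: the built-in boundary conditions of $\bWa$ together with the Galerkin orthogonality \eqref{eq:discreigp} applied to the hat function $\HF$. The rest of the argument is a bookkeeping exercise using \eqref{eq:ragN}.
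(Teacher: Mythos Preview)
Your proof is correct and follows essentially the same route as the paper's: reduce to the constant test function for $\ta \in \NTI \cup \NTN$, evaluate the left-hand side via the divergence theorem and the boundary conditions built into $\bWa$, and evaluate the right-hand side by testing \eqref{eq:discreigp} with $\HF$ and invoking \eqref{eq:ragN}. The paper compresses these steps into the single chain \eqref{eq:Neumequilib}, but the ingredients are identical.
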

\begin{proof}
Notice that if $\ta \in \NTD$ then there is nothing to prove due to definition~\eqref{eq:Ppast}.
If $\ta \in \NTI \cup \NTN$ then we can use $\HF$ as a test function in \eqref{eq:discreigp}.
Consequently, identity \eqref{eq:ragN} and definition \eqref{eq:Wa} imply
\begin{equation}
\label{eq:Neumequilib}
  (\rTa,1)_{\Oma} = - (\gTa, 1)_{\GammaN\cap\partial\Oma}
  = - (\tqa \cdot \tn_{\partial\Oma}, 1)_{\partial\Oma}
  = -(\ddiv \tqa, 1)_{\Oma}.
\end{equation}
\end{proof}

Let us note that for $\ta \in \NTI \cup \NTN$, problem~\eqref{min_prob_qa}
corresponds to a pure Neumann problem for $\da$.
This problem is solvable, because the corresponding equilibrium condition is exactly \eqref{eq:Neumequilib}.
In addition, its solution is unique thanks to the fact that
the space $\PpTast$ does not contain constant functions. For $\ta \in \NTD$, problem~\eqref{min_prob_qa} corresponds to a well posed Dirichlet--Neumann problem
and the space $\PpTast$ contains constant functions.


Now, we present an important property of the introduced flux reconstruction.
\begin{lemma}
\label{le:etaR=0}
Let $\tqT \in \bWT$ be given by \eqref{eq:defq} and problems \eqref{min_prob_qa}.
Then
\begin{align}
 \label{eq:etaR=0}
 \lamTf \beta_1 \uTf - c \uTf + \ddiv \tqT &= 0
 \quad\text{a.e. in }\Omega,
 \\
 \label{eq:etaN=0}
 \alpha \uTf - \lamTf \beta_2 \uTf + \tqT \cdot \tn &= 0
 \quad\text{a.e. on }\GammaN.
\end{align}
\end{lemma}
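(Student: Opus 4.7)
The plan is to establish both identities element-wise (respectively edge-wise), exploiting the partition-of-unity property of the hat functions $\HF$ together with the local problems \eqref{min_prob_qa}.

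For the volume identity \eqref{eq:etaR=0}, I would fix an element $K \in \cT$ and test the divergence. By \eqref{eq:defq}, on $K$ we have $\ddiv \tqT|_K = \sum_{\ta \in \NTK} \ddiv \tqa|_K$, since $\tqa$ is supported in $\Oma$ and $K \subset \overline{\Oma}$ precisely when $\ta \in \NTK$. Pick any $\vT \in P_p(K)$ and extend it by zero to the rest of each patch $\Oma$; the extension lies in $\PpT$ for every $\ta \in \NTK$, so the strong divergence identity \eqref{eq:testPp} gives
\begin{equation*}
-(\ddiv \tqT, \vT)_K \;=\; \sum_{\ta \in \NTK} (\rTa, \vT)_K.
\end{equation*}
Using the definition \eqref{eq:ra} of $\rTa$ together with the partition-of-unity identities $\sum_{\ta \in \NTK} \HF \equiv 1$ and $\sum_{\ta \in \NTK} \nabla \HF \equiv \tO$ on $K$, the sum telescopes to $(\lamTf \beta_1 \uTf - c \uTf, \vT)_K$. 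Hence $(\ddiv \tqT + \lamTf \beta_1 \uTf - c \uTf, \vT)_K = 0$ for all $\vT \in P_p(K)$. Both factors in the scalar product are in $P_p(K)$ (because $\ddiv \tqT|_K \in P_p(K)$ by the definition of $\RT(K)$, and $\beta_1, c$ are piecewise constant while $\uTf|_K$ is polynomial of degree $p$), so the identity holds pointwise on $K$.

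For the boundary identity \eqref{eq:etaN=0}, I would fix a Neumann edge $\Gamma \in \cETN$ with unique adjacent element $K$ and its three vertices $\ta_1, \ta_2, \ta_3$, the first two being the endpoints of $\Gamma$. Only vertices of $K$ yield patches containing $\Gamma$, so $\tqT \cdot \tn|_\Gamma = \sum_{j=1}^3 \tqa_j \cdot \tn|_\Gamma$. For $\ta_3$, the edge $\Gamma$ lies in $\cEaBE$, giving $\tqa_3 \cdot \tn|_\Gamma = 0$ by \eqref{eq:Wa0}--\eqref{eq:Wa}. For $\ta_1, \ta_2$, the edge $\Gamma$ lies in $\cEaBN$, so the boundary condition in \eqref{eq:Wa} yields $\tqa_j \cdot \tn|_\Gamma = \Pi_\Gamma(g^\cT_{\ta_j})$. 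Summing and using $\HF_{\ta_1} + \HF_{\ta_2} \equiv 1$ on $\Gamma$, the definition \eqref{eq:ga} gives
\begin{equation*}
\sum_{j=1}^{2} g^\cT_{\ta_j} \;=\; (\lamTf \beta_2 - \alpha)\uTf \quad \text{on } \Gamma.
\end{equation*}
Since $\alpha, \beta_2$ are piecewise constant and $\uTf|_\Gamma \in P_p(\Gamma)$, the projection $\Pi_\Gamma$ acts as the identity on this polynomial, so $\tqT \cdot \tn|_\Gamma = (\lamTf \beta_2 - \alpha)\uTf$, which is \eqref{eq:etaN=0}.

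The only mildly delicate points are the careful bookkeeping of which edges belong to $\cEaBE$, $\cEaBN$, $\cEaBD$ for each vertex (especially when $\ta_1$ or $\ta_2$ happens to sit on the Dirichlet--Neumann interface, in which case $\ta_j \in \NTD$ but $\Gamma$ is still in $\cEaBN$ for that vertex, so the same boundary condition applies), and the verification that the polynomial test space $P_p(K)$ on a single element is indeed reached by the extension-by-zero into $\PpT$. The partition-of-unity cancellation $\sum_{\ta \in \NTK} \nabla \HF = \tO$ is the crucial ingredient that makes the $\cA \nabla \HF$ contributions in $\rTa$ disappear.
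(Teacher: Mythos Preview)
Your proof is correct and follows essentially the same approach as the paper's. The only cosmetic differences are that the paper tests directly against the residual $R_{\tq}$ (respectively $J_{\tq}$) itself to show its $L^2$-norm vanishes, whereas you test against an arbitrary $\vT\in P_p(K)$; and the paper leaves implicit the observation that the vertex of $K$ opposite to $\Gamma$ contributes nothing (since $\Gamma\in\cEaBE$ there), which you spell out explicitly.
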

\begin{proof}
   Let us set $R_\tq = - \ddiv \tqT + c \uTf - \lamTf \beta_1 \uTf$.
   Clearly, $R_\tq \in P_p(K)$ for all $K\in\cT$.
   Using the decomposition of unity $\sum_{\ta \in \NTK} \HF = 1$
   and \eqref{eq:testPp}, we obtain
   \begin{multline*} 
      \norm{R_\tq}^2_K
      = \sum_{\ta \in \NTK} \left( -\ddiv \tqa + c \uTf \HF - \lamTf \beta_1 \uTf \HF,
               R_\tq \right)_K
      \\
      = \sum_{\ta \in \NTK} \left( - \ddiv \tqa - \rTa - (\cA \nabla\HF)\cdot\nabla\uTf,
               R_\tq \right)_K
      = - \sum_{\ta \in \NTK} \left( (\cA \nabla\HF)\cdot \nabla\uTf, R_\tq \right)_K = 0.
   \end{multline*}
   Thus, $R_\tq$ vanishes almost everywhere in all elements $K\in\cT$ and, hence, in $\Omega$.

  To prove the second statement, we set $J_\tq =  \alpha \uTf - \lamTf \beta_2 \uTf + \tqT \cdot \tn$. Let $\Gamma \in \cETN$ be an arbitrary edge on the Neumann boundary. Clearly, $J_\tq \in P_p(\Gamma)$.
  The decomposition of unity $\sum_{\ta \in \NTGa} \HF = 1$ with $\NTGa$ being the set of the two end-points of the edge $\Gamma$ and definition~\eqref{eq:Wa} then give
  \begin{multline}\label{eval_trace_Neum_indic}
    \norm{ J_\tq }_\Gamma^2
    = \sum_{\ta \in \NTGa} (\alpha \HF\uTf - \lamTf \beta_2 \HF\uTf + \tqa {\cdot} \tn_\Gamma, J_\tq)_\Gamma
    \\
    = \sum_{\ta \in \NTGa} (\alpha \HF\uTf - \lamTf \beta_2 \HF\uTf + \Pi_\Gamma(\gTa), J_\tq)_\Gamma
    = 0.
  \end{multline}
  Thus, $J_\tq$ vanishes almost everywhere on $\Gamma$ and, hence, on $\GammaN$.
\end{proof}

The following lemma shows the distinctive feature of the error estimator $\eta$ given by \eqref{eq:etaK}.
It provides a guaranteed and fully computable upper bound on $\| w \|_a$, see the definition \eqref{eq:defw}.
In contrast, the classical residual estimator \eqref{eq:etaRK} provides an upper bound on $\| w \|_a$ up to an unknown multiplicative constant.


\begin{lemma} \label{le:normw}
Let the flux reconstruction $\tqT \in \bWT$ be given by \eqref{eq:defq} and problem \eqref{min_prob_qa}. 
Let $\lamTf \in \R$ and $\uTf \in \VT$ satisfy \eqref{eq:discreigp}.
Further, let $w \in V$ be given by \eqref{eq:defw} with bilinear forms defined in \eqref{eq:blf}--\eqref{eq:blfb}
and with $\lambda_* = \lamTf$ and $u_* = \uTf$.
Let $\eta = \norm{\nabla \uTf - \cA^{-1} \tqT}_{\cA}$ be given by \eqref{eq:etaK}.
Then
$$
  \norm{w}_a \leq \eta.
$$
\end{lemma}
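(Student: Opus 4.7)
The plan is to express $\|w\|_a$ as a supremum of the linear functional $a(w,\cdot)$ over the unit ball in $(V,\|\cdot\|_a)$, rewrite $a(w,v)$ using the definition \eqref{eq:defw}, insert the reconstructed flux, integrate by parts, absorb the volume and boundary residuals using Lemma~\ref{le:etaR=0}, and finally apply Cauchy--Schwarz in the $\cA$-weighted inner product.

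Concretely, I would pick $v\in V$ with $\|v\|_a=1$ and write
\begin{equation*}
a(w,v) = (\cA\nabla\uTf,\nabla v) + (c\uTf,v) + (\alpha\uTf,v)_\GammaN - \lamTf(\beta_1\uTf,v) - \lamTf(\beta_2\uTf,v)_\GammaN .
\end{equation*}
Then I would add and subtract $(\tqT,\nabla v)$ to obtain
\begin{equation*}
a(w,v) = (\cA\nabla\uTf - \tqT, \nabla v) + (\tqT, \nabla v) + \bigl(c\uTf-\lamTf\beta_1\uTf,\,v\bigr) + \bigl((\alpha-\lamTf\beta_2)\uTf,\,v\bigr)_\GammaN .
\end{equation*}
Since $\tqT \in \Hdiv$ and $v\in V$ vanishes on $\GammaD$, Green's formula gives $(\tqT,\nabla v) = -(\ddiv\tqT,v) + (\tqT\cdot\tn,v)_\GammaN$. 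Substituting this and grouping the volume and Neumann-boundary terms yields
\begin{equation*}
a(w,v) = (\cA\nabla\uTf - \tqT, \nabla v) + \bigl(c\uTf-\lamTf\beta_1\uTf-\ddiv\tqT,\,v\bigr) + \bigl((\alpha-\lamTf\beta_2)\uTf + \tqT\cdot\tn,\,v\bigr)_\GammaN .
\end{equation*}

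The two residual terms vanish exactly by Lemma~\ref{le:etaR=0}: identity \eqref{eq:etaR=0} kills the volume part and \eqref{eq:etaN=0} kills the Neumann part. What remains is simply $(\cA(\nabla\uTf - \cA^{-1}\tqT),\nabla v)$, and Cauchy--Schwarz with respect to the $\cA$-weighted $L^2$ inner product (whose induced norm is $\|\cdot\|_{\cA}$) gives
\begin{equation*}
a(w,v) \leq \|\nabla\uTf - \cA^{-1}\tqT\|_{\cA}\,\|\nabla v\|_{\cA} = \eta\,\|\nabla v\|_{\cA} \leq \eta\,\|v\|_a = \eta ,
\end{equation*}
where the last inequality uses that $c\geq 0$ and $\alpha\geq 0$ make $\|\nabla v\|_{\cA}^2 = (\cA\nabla v,\nabla v)$ a lower bound for $a(v,v)$. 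Taking the supremum over all such $v$ (equivalently, testing with $v = w/\|w\|_a$) yields $\|w\|_a \leq \eta$.

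The only real subtlety I foresee is keeping track of signs and the boundary terms during the integration by parts, and verifying that Lemma~\ref{le:etaR=0} is invoked with the correct orientation so that each residual indeed cancels rather than doubles. Everything else is routine bookkeeping: the $\Hdiv$-conformity of $\tqT$ ensures there are no hidden interelement jump contributions, and the discrete Galerkin orthogonality satisfied by $\uTf$ plays no role here since the cancellation is a pointwise one coming from the construction of $\tqT$.
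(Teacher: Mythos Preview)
Your proof is correct and follows essentially the same approach as the paper: insert the reconstructed flux, integrate by parts, invoke Lemma~\ref{le:etaR=0} to kill the volume and Neumann residuals, and finish with Cauchy--Schwarz in the $\cA$-weighted inner product together with $\|\nabla v\|_{\cA}\leq\|v\|_a$. The only cosmetic difference is that the paper tests \eqref{eq:defw} directly with $v=w$ to obtain $\|w\|_a^2$ on the left-hand side, whereas you work with a general unit vector $v$ and then take the supremum (or specialize to $v=w/\|w\|_a$); the two presentations are equivalent.
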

\begin{proof}
Using test function $v = w$ in \eqref{eq:defw}, expanding the bilinear forms on the right-hand side according to \eqref{eq:blf} and \eqref{eq:blfb}, and applying the divergence theorem, we end up with the following expression
$$
  \norm{w}_a^2 = ( \cA \nabla \uTf - \tqT, \nabla w ) + (c \uTf - \lamTf \beta_1 \uTf - \ddiv\tqT, w)
    + ( \alpha \uTf - \lamTf \beta_2 \uTf + \tqT \cdot \tn, w)_\GammaN.
$$
The last two terms vanish due to \eqref{eq:etaR=0} and \eqref{eq:etaN=0} and hence we can estimate $\norm{w}_a^2$ by the Cauchy--Schwarz inequality as
$$
  \norm{w}_a^2 \leq \norm{\nabla \uTf - \cA^{-1} \tqT}_{\cA} \norm{\nabla w}_{\cA}
  \leq \norm{\nabla \uTf - \cA^{-1} \tqT}_{\cA} \norm{w}_{a}.
$$
\end{proof}

We conclude this section by a theorem which shows that the error estimator \eqref{eq:etaK} can be used to compute a lower bound on the principal eigenvalue $\lambda_1$. This is only possible thank to the guaranteed upper bound provided by Lemma~\ref{le:normw}.
\begin{theorem}[Lower bound on the principal eigenvalue]
\label{th:lowerbound}
Let $\lamT_1 \in \R$ and $\uT_1 \in \VT$, $|\uT_1|_b = 1$,
be the approximate principal eigenvalue and the corresponding eigenfunction given by \eqref{eq:discreigp}
and let the error estimator $\eta$ be given by \eqref{eq:etaK}.
Let $\lambda_1$ be the smallest eigenvalue of \eqref{eq:eigp}
and let it satisfy
\begin{equation}
  \label{eq:closestcond}
  \lamT_1 \leq 2 \left( \lambda_1^{-1} + \lambda_2^{-1} \right)^{-1}.
\end{equation}
Then
\begin{equation}
  \label{eq:underlinelambda}
  {\underline\lambda}_1^{\cT} \leq \lambda_1,
  \quad\text{where }
  {\underline\lambda}_1^{\cT} = \frac14 \left( -\eta + \sqrt{\eta^2 + 4\lambda_1^{\cT}} \right)^2.
\end{equation}
\end{theorem}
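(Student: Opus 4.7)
The plan is to apply the abstract complementarity estimate of Theorem~\ref{le:abscompl} with the choice $u_* = \uT_1$, $\lambda_* = \lamT_1$, $A = \eta$, and $B = 0$. With these choices both hypotheses of \eqref{eq:abscompl} are immediate: the bound $\norm{w}_a \leq A + C_{ab} B = \eta$ follows directly from Lemma~\ref{le:normw}, and $B = 0 < \lamT_1 = \lambda_*$ holds because $\lamT_1 > 0$ by Lemma~\ref{le:lambdapos} applied to the discrete problem. Plugging $A = \eta$ and $B = 0$ into the conclusion \eqref{eq:enclo_eig} yields exactly the bound \eqref{eq:underlinelambda}.

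The one nontrivial thing to verify is the closest-eigenvalue hypothesis \eqref{eq:closest}, namely that $\lambda_1$ is the eigenvalue of \eqref{eq:eigp} relatively closest to $\lamT_1$. This is where condition \eqref{eq:closestcond} is used. Since the Galerkin method is conforming we have $\lamT_1 \geq \lambda_1$, so
\[
  \left|\frac{\lambda_1 - \lamT_1}{\lambda_1}\right|
  = \frac{\lamT_1}{\lambda_1} - 1.
\]
For any $i \geq 2$ we first note that \eqref{eq:closestcond} together with $\lambda_i \geq \lambda_2 \geq \lambda_1$ gives
\[
  \lamT_1 \leq \frac{2}{\lambda_1^{-1} + \lambda_2^{-1}} \leq \frac{2}{\lambda_2^{-1} + \lambda_2^{-1}} = \lambda_2 \leq \lambda_i,
\]
so
\[
  \left|\frac{\lambda_i - \lamT_1}{\lambda_i}\right| = 1 - \frac{\lamT_1}{\lambda_i}.
\]
The desired inequality $\lamT_1/\lambda_1 - 1 \leq 1 - \lamT_1/\lambda_i$ is equivalent to $\lamT_1(\lambda_1^{-1} + \lambda_i^{-1}) \leq 2$, and since $\lambda_i^{-1} \leq \lambda_2^{-1}$ this is a consequence of \eqref{eq:closestcond}. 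Hence \eqref{eq:closest} holds for all $i \geq 1$.

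The main obstacle is therefore just this case analysis for the closest-eigenvalue condition; the rest is a direct substitution into the abstract estimate. Once the verification above is in place, Theorem~\ref{le:abscompl} applies and produces \eqref{eq:underlinelambda} immediately.
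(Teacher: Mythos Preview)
Your proof is correct and follows exactly the same approach as the paper: apply Theorem~\ref{le:abscompl} with $A=\eta$, $B=0$, use Lemma~\ref{le:normw} for the bound $\norm{w}_a\le\eta$, and verify that \eqref{eq:closestcond} implies the relative closeness assumption \eqref{eq:closest}. The paper merely asserts that \eqref{eq:closestcond} is equivalent to \eqref{eq:closest} and leaves the verification to the reader; you have spelled out this implication explicitly (using the conforming Galerkin inequality $\lamT_1\ge\lambda_1$ and the monotonicity $\lambda_i\ge\lambda_2$), which is exactly the computation the paper omits.
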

\begin{proof}
The proof follows straightforwardly from Theorem~\ref{le:abscompl} and Lemma~\ref{le:normw}.
Indeed, it is easy to verify that condition \eqref{eq:closestcond} is equivalent to the relative closeness assumption \eqref{eq:closest}.
Lemma~\ref{le:normw} implies that quantities $A$ and $B$ in \eqref{eq:abscompl} are $A=\eta$ and $B=0$ and estimate \eqref{eq:enclo_eig} then becomes \eqref{eq:underlinelambda}.
%
\end{proof}

The lower bound on the principal eigenvalue enables to derive a lower bound on any other eigenvalue as follow.

\begin{theorem}[Lower bounds on the other eigenvalues]
\label{th:lowerboundi}
Let $\lamTf\in\R$ and $\uTf\in\VT$ with $|\uTf|_b = 1$ and $i \geq 2$ be an approximate eigenpair satisfying \eqref{eq:discreigp}. Let the error estimator $\eta$ be given by \eqref{eq:etaK}
and let $\underline{\lambda}_1 \leq \lambda_1$ be a lower bound on the first eigenvalue.
If the approximation $\lamTf$ satisfies
\begin{equation}
  \label{eq:closestcondi}
  \lamTf \leq
  2 \left( \lambda_i^{-1} + \lambda_{i+1}^{-1} \right)^{-1}
\end{equation}
then
\begin{equation}
  \label{eq:underlinelambdai}
  {\underline\lambda}_i^{\cT} \leq \lambda_i,
  \quad\text{where }
  {\underline\lambda}_i^{\cT} =
     \lamTf \left( 1 + \underline{\lambda}_1^{-1/2} \eta \right)^{-1},
  \quad
  i \geq 2.
\end{equation}
\end{theorem}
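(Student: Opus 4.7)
The plan is to reduce the statement to an application of Theorem~\ref{th:estwb}, using the Friedrichs--Poincar\'e type inequality \eqref{eq:absineq} and the bound from Lemma~\ref{le:normw} to control the right-hand side, and then argue that among the eigenvalues of \eqref{eq:eigp} it is precisely $\lambda_i$ that is relatively closest to $\lamTf$.

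I would first invoke Theorem~\ref{th:estwb} with the choices $u_*=\uTf$, $\lambda_*=\lamTf$ and with $w\in V$ defined by \eqref{eq:defw} using the bilinear forms \eqref{eq:blf}--\eqref{eq:blfb}. Since $|\uTf|_b = 1$, this yields
\begin{equation*}
  \min\limits_j \left|\frac{\lambda_j - \lamTf}{\lambda_j}\right| \leq |w|_b.
\end{equation*}
To handle $|w|_b$, I would apply the abstract Friedrichs--Poincar\'e inequality \eqref{eq:absineq}, giving $|w|_b \leq C_{ab}\|w\|_a$ with $C_{ab} = \lambda_1^{-1/2}$. Because $\underline{\lambda}_1 \leq \lambda_1$, we have $C_{ab} \leq \underline{\lambda}_1^{-1/2}$, and Lemma~\ref{le:normw} gives $\|w\|_a \leq \eta$. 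Combining these, $|w|_b \leq \underline{\lambda}_1^{-1/2}\eta$.

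The key point is then to identify the index $j$ attaining the minimum as $j=i$. Here I would combine condition \eqref{eq:closestcondi} with the Galerkin upper bound $\lamTf \geq \lambda_i$ (which holds because $(\lamTf,\uTf)$ solves the conforming discrete problem \eqref{eq:discreigp}, by the Courant--Fischer min-max principle). Relative closeness is most easily analysed through the reciprocals $\mu_j = 1/\lambda_j$ via the identity $|(\lambda_j-\lamTf)/\lambda_j| = \lamTf\,|1/\lamTf - 1/\lambda_j|$. The Galerkin property gives $1/\lamTf \leq 1/\lambda_i$, so for any $j<i$ we have $1/\lambda_j \geq 1/\lambda_i \geq 1/\lamTf$, whence $|1/\lamTf - 1/\lambda_j| \geq |1/\lamTf - 1/\lambda_i|$ by simple monotonicity. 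On the other side, \eqref{eq:closestcondi} is equivalent to $1/\lamTf \geq (1/\lambda_i + 1/\lambda_{i+1})/2$, which forces $|1/\lamTf - 1/\lambda_i| \leq |1/\lamTf - 1/\lambda_{i+1}|$, and for $j\geq i+2$ the inequality follows by monotonicity as well. Hence $j=i$ realises the minimum.

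Putting everything together we obtain $|\lamTf - \lambda_i|/\lambda_i \leq \underline{\lambda}_1^{-1/2}\eta$, and using once more $\lamTf \geq \lambda_i$ this becomes $\lamTf - \lambda_i \leq \lambda_i\underline{\lambda}_1^{-1/2}\eta$, i.e.
\begin{equation*}
  \lambda_i \geq \frac{\lamTf}{1 + \underline{\lambda}_1^{-1/2}\eta} = \underline{\lambda}_i^\cT,
\end{equation*}
which is exactly \eqref{eq:underlinelambdai}. The main obstacle I foresee is the careful justification of step~3: the hypothesis \eqref{eq:closestcondi} only controls the relative distance between $\lamTf$ and $\lambda_{i+1}$, so one genuinely needs the Galerkin bound $\lamTf \geq \lambda_i$ to rule out that some lower eigenvalue $\lambda_j$, $j<i$, becomes relatively closer; the remaining algebraic manipulations are routine.
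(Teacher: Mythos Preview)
Your proof is correct and follows essentially the same route as the paper: invoke Theorem~\ref{th:estwb}, bound $|w|_b$ by $\underline{\lambda}_1^{-1/2}\eta$ via \eqref{eq:absineq} and Lemma~\ref{le:normw}, and combine the Galerkin bound $\lambda_i\le\lamTf$ with \eqref{eq:closestcondi} to identify $\lambda_i$ as the relatively closest eigenvalue. The only difference is cosmetic: the paper merely asserts that \eqref{eq:closestcondi} together with $\lambda_i\le\lamTf$ ``readily imply'' the relative closeness, whereas you spell out the argument through the reciprocals $1/\lambda_j$, which is a clean way to do it.
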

\begin{proof}
Conditions \eqref{eq:closestcondi} and $\lambda_i \leq \lamTf$ readily imply the relative closeness of $\lamTf$ to $\lambda_i$, i.e.
\begin{equation*}
  \min\limits_j \left| \frac{\lambda_j - \lamTf}{\lambda_j} \right| = \frac{\lamTf - \lambda_i}{\lambda_i}.
\end{equation*}
Using this fact, Theorem~\ref{th:estwb}, inequality \eqref{eq:absineq}, Lemma~\ref{le:normw}, and the bound $\underline{\lambda}_1 \leq \lambda_1$, we obtain
$$
  \frac{\lamTf - \lambda_i}{\lambda_i}
    = \min\limits_j \left| \frac{\lambda_j - \lamTf}{\lambda_j} \right|
    \leq | w |_b
    \leq \lambda_1^{-1/2} \| w \|_a
    \leq \underline{\lambda}_1^{-1/2} \eta.
$$
We finish the proof by observing that this inequality is equivalent to \eqref{eq:underlinelambdai}.
\end{proof}


\section{Local efficiency of the error indicators}
\label{se:loceff}

In this section we still keep $\lamTf\in\R$ and $\uTf\in\VT$ to be a fixed approximate eigenpair.
We proof that error indicators \eqref{eq:etaK} are bounded from above by a constant multiple of the classical residual error indicators \eqref{eq:etaRK}. This result is well known for various source problems, see e.g.~\cite{El_Al_Ern_Voh_a_post_NL_10,Ern_Voh_adpt_IN_13} and we provide its generalization to eigenvalue problems.
The technique of the proof is based on the nonconforming projection method of \cite{Arbo_Chen_implem_MM_nonc_met_sec_ord_el_prob_95,Arn_Bre_mix_NCFEMs_implem_postproc_EEs_85}, see also~\cite[Section~4.4.2]{Voh_un_apr_apost_MFE_10}.
The nonconforming projection methods enables to express the solution of the mixed problem \eqref{min_prob_qaO} alternatively as suitable projections of the solution of a specially constructed nonconforming finite element problem.

We start with the definition of the nonconforming finite element space $M_p(\Ta)$ on the patch $\Ta$. It consists of piecewise polynomial and in general discontinuous functions satisfying certain jump conditions on edges.
We first define the local space $M_p(K)$ as
\begin{align*}
M_p(K) &= \left\{ \begin{array}{lll}
  \{v_h \in P_{p+3}(K): v|_{\Gamma} \in P_{p+1}(\Gamma) \quad \Gamma \subset \pt K\}, \quad \text{if } p \text{ is even}, \\
  \{v_h \in P_{p+3}(K): v|_{\Gamma} \in P_{p}(\Gamma) \oplus \widetilde{P}_{p+2}(\Gamma) \quad \Gamma \subset \pt K\}, \quad \text{if } p \text{ is odd},
\end{array} \right.
\end{align*}
where $\widetilde{P}_{p+2}(\Gamma)$ denotes the one dimensional space generated by the Legendre polynomial of degree $p+2$ on the edge $\Gamma$.
Now, we define $M_p(\Ta)$ as the space of functions $m_h$ defined in $\Oma$ satisfying the following four conditions:
\begin{alignat}{2}
  \label{V0}
  m_h|_K &\in M_p(K) &\quad &\forall K\in\Ta,
  \\
  \label{V1}
  ([m_h], w_h )_\Gamma &= 0 &\quad &\forall w_h \in P_p(\Gamma),
      \ \forall \Gamma \in \cEaI,
  \\
  \label{V2}
  (m_h, w_h)_\Gamma &= 0 &\quad &\forall w_h \in P_p(\Gamma),
      \ \forall \Gamma \in \cEaBD, 
  \\
  \label{V3}
  (m_h, 1)_{\Oma} &= 0 &\quad &\forall \ta \in \NTI \cup \NTN.
\end{alignat}

In the nonconforming projection method, the mixed problem \eqref{min_prob_qaO} corresponds to the problem 
of finding 
$\tilde{\da} \in M_p(\Ta)$ such that
\begin{equation}\label{proj_method}
  \sum_{K \in \Ta} (\PiWK(\cA \nabla \tilde{\da}|_{K} - \HF \cA \nabla \uTf + \ttqa), \nabla \xi)_K
  = - (\rTa + \ddiv \ttqa, \Pi_p(\xi))_{\Oma}
\end{equation}
for all $\xi \in M_p(\Ta)$,
where $\PiWK$ is the $L^2(K)$-orthogonal projection with respect to the scalar product $(\cA^{-1} {\cdot}, {\cdot})_K$ onto the Raviart--Thomas space $\RT(K)$
and $\ttqa \in \bWa$ is fixed as in \eqref{min_prob_qaO}.

In \cite{Arbo_Chen_implem_MM_nonc_met_sec_ord_el_prob_95} it is shown that this problem
is uniquely solvable.
In addition, the result \cite[Theorem 1]{Arbo_Chen_implem_MM_nonc_met_sec_ord_el_prob_95} considered in the setting of problem \eqref{proj_method}, implies the following lemma.
\begin{lemma}
Let $\tilde{\da} \in M_p(\Ta)$ be the solution of problem \eqref{proj_method}.
Then functions
\begin{align}
  {\tqaO}|_{K} &= - \PiWK(\cA \nabla \tilde{\da} - \HF \cA \nabla \uTf + \ttqa)|_{K} \qquad \forall K \in \Ta, \label{postproc_1}\\
  {\daO}|_{K} &= \Pi_p(\tilde{\da}|_{K}) \qquad \forall K \in \Ta \label{postproc_2}
\end{align}
solve problem \eqref{min_prob_qaO}.
\end{lemma}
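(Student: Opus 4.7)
The plan is to verify, given the nonconforming solution $\tilde{\da}\in M_p(\Ta)$ of \eqref{proj_method}, that the post-processed pair $(\tqaO,\daO)$ defined by \eqref{postproc_1}--\eqref{postproc_2} satisfies the three defining properties of \eqref{min_prob_qaO}: (i) $\tqaO\in\bWa^0$, (ii) $\daO\in\PpTast$, and (iii) the two mixed equations \eqref{min_prob_qaO_1}--\eqref{min_prob_qaO_2} hold. Item (ii) is almost immediate: $\Pi_p(\tilde{\da}|_K)\in P_p(K)$ by definition of $\Pi_p$, and in the case $\ta\in\NTI\cup\NTN$ the zero-mean constraint on $\daO$ over $\Oma$ follows from condition \eqref{V3} on $M_p(\Ta)$ together with the fact that $\Pi_p$ preserves the integral over each element.

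The nontrivial part of (i) is normal continuity of $\tqaO$ across interior edges $\Gamma\in\cEaI$ and the homogeneous normal condition on $\Gamma\in\cEaBE\cup\cEaBN$. For each $\Gamma$ and each adjacent $K$, the normal trace $\PiWK(\cA\nabla\tilde{\da}-\HF\cA\nabla\uTf+\ttqa)|_K\cdot\tn_\Gamma$ is the $L^2(\Gamma)$-projection of $(\cA\nabla\tilde{\da}-\HF\cA\nabla\uTf+\ttqa)|_K\cdot\tn_\Gamma$ onto $P_p(\Gamma)$, by standard properties of the Raviart--Thomas interpolant realized through $\PiWK$. Since $\HF\cA\nabla\uTf$ is single-valued across $\cEaI$ and $\ttqa\in\bWa$ already satisfies the required boundary conditions on $\cEaBE\cup\cEaBN$, it suffices to prove the analogous jump/trace statements for $\cA\nabla\tilde{\da}\cdot\tn_\Gamma$ projected onto $P_p(\Gamma)$. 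These are obtained by testing against $w_h\in P_p(\Gamma)$ and invoking the jump conditions \eqref{V1} on interior edges and the trace condition \eqref{V2} on Dirichlet-type edges.

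For (iii), equation \eqref{min_prob_qaO_2} follows by selecting $\xi\in M_p(\Ta)$ that is elementwise constant and compatible with \eqref{V3}: for such $\xi$ one has $\Pi_p(\xi)=\xi$ and $\nabla\xi|_K=0$, so \eqref{proj_method} reduces exactly to $-(\ddiv\tqaO,\xi)_{\Oma}=(\rTa+\ddiv\ttqa,\xi)_{\Oma}$ after an elementwise integration by parts, and the span of such $\xi$ is $\PpTast$ restricted to elementwise constants, which is enough since $\ddiv\tqaO|_K\in P_p(K)$ and $\rTa+\ddiv\ttqa\in P_p(\Ta)$ would otherwise be handled by another choice of $\xi\in M_p(\Ta)$. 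Equation \eqref{min_prob_qaO_1} is obtained by substituting \eqref{postproc_1} into $(\cA^{-1}\tqaO,\bwT)_{\Oma}$, using that $\PiWK$ is an $L^2$-projection for the weight $\cA^{-1}$ onto $\RT(K)\ni\bwT|_K$, then integrating elementwise by parts the resulting $-\sum_K(\nabla\tilde{\da},\bwT)_K$. The volume part yields $(\tilde{\da},\ddiv\bwT)_{\Oma}=(\daO,\ddiv\bwT)_{\Oma}$ because $\ddiv\bwT|_K\in P_p(K)$, while the boundary contributions vanish: interior-edge terms cancel by the jump condition \eqref{V1} together with $\bwT\cdot\tn_\Gamma\in P_p(\Gamma)$, and boundary-edge terms vanish either by \eqref{V2} on $\cEaBD$ or by $\bwT\cdot\tn=0$ on $\cEaBE\cup\cEaBN$.

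The main obstacle is the careful matching of polynomial degrees in the boundary bookkeeping: the jump/trace conditions \eqref{V1}--\eqref{V2} defining $M_p(\Ta)$ were tailored precisely so that, when paired against Raviart--Thomas normal traces $\bwT\cdot\tn_\Gamma\in P_p(\Gamma)$, all spurious edge integrals from the elementwise integration by parts cancel; and conversely, that the $P_p$-projection structure of $\PiWK$ on edges matches what \eqref{V1}--\eqref{V2} can annihilate. This degree matching is the content of \cite[Theorem~1]{Arbo_Chen_implem_MM_nonc_met_sec_ord_el_prob_95}; our task reduces to transcribing that equivalence to the present patch-based setting, where the right-hand sides carry the data $\HF\cA\nabla\uTf$, $\rTa$, and the inhomogeneity introduced by the lifting $\ttqa\in\bWa$ that encodes the Neumann datum $\Pi_\Gamma(\gTa)$ on $\cEaBN$.
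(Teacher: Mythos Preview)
The paper does not actually prove this lemma; it merely cites \cite[Theorem~1]{Arbo_Chen_implem_MM_nonc_met_sec_ord_el_prob_95} and says the statement follows. Your proposal attempts the direct verification that this citation stands for. Your arguments for $\daO\in\PpTast$ and for \eqref{min_prob_qaO_1} are correct: you legitimately drop the projection $\PiWK$ when paired against $\bwT|_K\in\RT(K)$, integrate $(\nabla\tilde{\da},\bwT)_K$ by parts elementwise, and cancel the edge terms via \eqref{V1}--\eqref{V2} and the boundary conditions built into $\bWa^0$.

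Two steps, however, contain genuine gaps. For the normal continuity of $\tqaO$ you assert that $\PiWK(v)\cdot\tn_\Gamma$ equals the edge $L^2$-projection of $v\cdot\tn_\Gamma$ onto $P_p(\Gamma)$. This is a property of the Raviart--Thomas \emph{interpolant}, but $\PiWK$ is defined in the paper as the $\cA^{-1}$-weighted $L^2(K)$-orthogonal projection onto $\RT(K)$; for such a volume projection the normal trace is not determined edgewise in this way. (Incidentally, $\HF\cA\nabla\uTf\cdot\tn_\Gamma$ is not single-valued across $\cEaI$ either, since $\cA\nabla\uTf$ has normal jumps.) In Arbogast--Chen the normal continuity is extracted instead from the variational equation \eqref{proj_method} by testing with specially designed $\xi\in M_p(\Ta)$; this is precisely why $M_p(K)$ carries the peculiar boundary constraints in its definition. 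For \eqref{min_prob_qaO_2} you propose piecewise constant test functions $\xi\in M_p(\Ta)$, but \eqref{V1} with $w_h=1$ forces any such $\xi$ to be globally constant, and then \eqref{V2} or \eqref{V3} force $\xi=0$: the only admissible test function yields no information. Moreover, with $\nabla\xi|_K=0$ the left-hand side of \eqref{proj_method} is already zero before any integration by parts, so the identity $-(\ddiv\tqaO,\xi)_{\Oma}=(\rTa+\ddiv\ttqa,\xi)_{\Oma}$ cannot emerge from it. The correct argument again requires the full structure of $M_p(\Ta)$ together with the dimension/uniqueness reasoning in the cited reference.
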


We now formulate and prove several auxiliary results.
\begin{lemma}
  Let $\tqa \in \bWa$, $m_h \in M_p(\Ta)$, and $\uTf \in \VT$. Then
   \begin{equation}\label{jump_qa}
      \sum_{K \in \Ta} \left(\tqa {\cdot} \tn_K, m_h \right)_{\pt K} =
      \sum_{\Gamma \in \cEaBN} (\Pi_\Gamma(\gTa), m_h)_\Gamma
   \end{equation}
and
   \begin{equation}\label{jump_grad_uB}
    \sum_{K \in \Ta} (\HF \cA \nabla \uTf {\cdot} \tn_K, m_h)_{\pt K} =
    \sum_{\Gamma \in \cEaI} \left( [\HF \cA \nabla \uTf]_\Gamma {\cdot} \tn_{\Gamma}, m_h \right)_{\Gamma} + \sum_{\Gamma \in \cEaBN} (\HF \cA \nabla \uTf \cdot \tn, m_h)_\Gamma,
   \end{equation}
where $m_h$ on an edge $\Gamma\in \cEaI$ can attain values from any of the two elements sharing $\Gamma$.
Note that sums over empty sets are considered as zero.
\end{lemma}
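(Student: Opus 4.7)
My plan is to prove both identities by the same strategy: decompose each $\int_{\partial K}$ into its constituent edge integrals, group the result by edge type ($\cEaI$, $\cEaBE$, $\cEaBD$, $\cEaBN$), and handle each group using the appropriate defining property of $\bWa$ or of $M_p(\Ta)$. Throughout, I will use that $\tqa \cdot \tn_\Gamma|_K \in P_p(\Gamma)$ for any $K \in \Ta$ (Raviart--Thomas normal traces) and that $\HF \cA \nabla \uTf \cdot \tn_\Gamma|_K \in P_p(\Gamma)$ (a product of the piecewise-linear hat function with a gradient of a $P_p$ function and a piecewise constant matrix, restricted to an edge).

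For identity~\eqref{jump_qa}, on an interior edge $\Gamma \in \cEaI$ shared by $K^+$ and $K^-$, I would apply the algebraic identity
$$
a_+ b_+ - a_- b_- = [a]\{b\} + \{a\}[b],
$$
with $a = \tqa \cdot \tn_\Gamma$ and $b = m_h$. The jump $[\tqa \cdot \tn_\Gamma]$ vanishes since $\tqa \in \Hdiv[\Oma]$, while the average $\{\tqa \cdot \tn_\Gamma\}$ lies in $P_p(\Gamma)$ and is therefore orthogonal to $[m_h]$ by~\eqref{V1}; consequently the interior-edge contribution is zero. The contribution from $\Gamma \in \cEaBE$ vanishes because $\tqa \cdot \tn_\Gamma = 0$ by definition of $\bWa$. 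On $\Gamma \in \cEaBD$ the trace $\tqa \cdot \tn_\Gamma \in P_p(\Gamma)$ is killed by~\eqref{V2}. Finally, on $\Gamma \in \cEaBN$ the definition of $\bWa$ gives $\tqa \cdot \tn_\Gamma = \Pi_\Gamma(\gTa)$, yielding the right-hand side of~\eqref{jump_qa}.

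For identity~\eqref{jump_grad_uB}, on $\Gamma \in \cEaI$ the same decomposition with $a = \HF \cA \nabla \uTf \cdot \tn_\Gamma$ and $b = m_h$ gives
$$
([\HF \cA \nabla \uTf \cdot \tn_\Gamma],\{m_h\})_\Gamma + (\{\HF \cA \nabla \uTf \cdot \tn_\Gamma\},[m_h])_\Gamma.
$$
The second term vanishes since its first factor is in $P_p(\Gamma)$ and~\eqref{V1} applies. In the first term, the jump $[\HF \cA \nabla \uTf \cdot \tn_\Gamma]$ itself lies in $P_p(\Gamma)$, so pairing it against $[m_h]/2$ gives zero, and hence $\{m_h\}$ may be replaced by either one-sided trace $m_h|_{K^+}$ or $m_h|_{K^-}$ without changing the value; this is exactly the ambiguity advertised in the statement. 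On $\Gamma \in \cEaBE$ the hat function $\HF$ vanishes identically (both endpoints of $\Gamma$ are vertices distinct from $\ta$, and a piecewise-linear function that vanishes at both endpoints of an edge vanishes on the whole edge), so the contribution disappears. On $\cEaBD$ the $P_p(\Gamma)$ trace is annihilated by~\eqref{V2}, and on $\cEaBN$ the contribution is retained as written, matching the right-hand side of~\eqref{jump_grad_uB}.

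No genuine obstacle is anticipated: the entire argument is bookkeeping of polynomial degrees of traces on edges and of which orthogonality in~\eqref{V0}--\eqref{V2} applies on each edge type. Property~\eqref{V3} is not needed here. The only mildly delicate point is keeping the orientation of $\tn_\Gamma$, the labels $K^\pm$, and the bracket $[\cdot]_\Gamma$ consistent with the paper's conventions, so that the jump/average decomposition produces the correct signs; once this is written down, both identities fall out by a single pass through the four edge types.
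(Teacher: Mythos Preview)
Your proposal is correct and follows essentially the same approach as the paper: decompose the boundary sums edge by edge, then eliminate or evaluate each contribution using the continuity of normal traces in $\Hdiv[\Oma]$, the boundary conditions in the definition of $\bWa$, the vanishing of $\HF$ on $\cEaBE$, and the moment conditions \eqref{V1}--\eqref{V2}. The only cosmetic difference is that you invoke the standard jump/average identity $[ab]=[a]\{b\}+\{a\}[b]$ explicitly, whereas the paper handles interior edges by directly swapping one trace of $m_h$ via \eqref{V1}; both arguments are the same in substance.
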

\begin{proof}
  First, we notice the identity
  $$
  \sum_{K \in \Ta} \left(\tqa {\cdot} \tn_K, m_h \right)_{\pt K}
  =  \sum_{\Gamma \in \cEaI} \left(\tqa {\cdot} \tn_\Gamma, [m_h]_\Gamma \right)_{\Gamma}
    +\sum_{\Gamma \in \cEaB} \left(\tqa {\cdot} \tn, m_h \right)_{\Gamma},
  $$
  where $\cEaB = \cEaBE \cup \cEaBD \cup \cEaBN$ is the set of edges on the boundary $\partial\Oma$.
  Since $\tqa|_\Gamma {\cdot} \tn_\Gamma \in P_p(\Gamma)$, we can use \eqref{V1} for all $\Gamma\in \cEaI$
  and, hence, the first sum on the right-hand side vanishes.
  Concerning the boundary edges $\Gamma \in \cEaB$, we have $\tqa {\cdot} \tn_\Gamma = 0$ for all $\Gamma\in \cEaBE$ by \eqref{eq:Wa}.
  On edges $\Gamma \in \cEaBD$ we have $\tqa {\cdot} \tn_\Gamma = 0$ by \eqref{V2}.
  Finally, on edges $\Gamma \in \cEaBN$ we have $\tqa {\cdot} \tn_\Gamma = \Pi_\Gamma(\gTa)$ by \eqref{eq:Wa} again.
  Consequently, \eqref{jump_qa} holds true.

  To prove \eqref{jump_grad_uB}, we consider an edge $\Gamma \in \cEaI$, its normal vector $\tn_\Gamma$, and elements $K^+$ and $K^-$ sharing this edge. Since $\HF \cA \nabla \uTf|_{K^-} \cdot \tn_{K^-} \in P_p(\Gamma)$, we can use \eqref{V1} to find that
  $$
    \left( \HF \cA \nabla \uTf|_{K^-} \cdot \tn_{K^-}, m_h|_{K^-} \right)_\Gamma = \left( \HF \cA \nabla \uTf|_{K^-} \cdot \tn_{K^-}, m_h|_{K^+} \right)_\Gamma
  $$
  and consequently
  $$
      \left( \HF \cA \nabla \uTf|_{K^+} \cdot \tn_{K^+}, m_h|_{K^+} \right)_\Gamma
    + \left( \HF \cA \nabla \uTf|_{K^-} \cdot \tn_{K^-}, m_h|_{K^-} \right)_\Gamma
    = \left( [\HF \cA \nabla \uTf]_\Gamma \cdot \tn_\Gamma, m_h \right)_\Gamma,
  $$
  where $m_h$ can be any of the two values $m_h|_{K^+}$ and $m_h|_{K^-}$.
  Thus, since $\HF$ vanishes on edges $\Gamma \in \cEaBE$ and $(\HF \cA \nabla \uTf \cdot \tn, m_h)_\Gamma = 0$ on edges $\Gamma\in\cEaBD$ by \eqref{V2},
  we obtain \eqref{jump_grad_uB}.
\end{proof}

We note that the set $\cEaBN$ is empty in many cases. For example if $\ta \in \NTI$ is an interior vertex then the right-hand side of \eqref{jump_qa} vanishes.

\begin{lemma} \label{le:FriedPoin_mh}
Let $h_\ta = \max_{K \in \Ta} h_K$, where
$h_K = \operatorname{diam}(K)$ denotes the diameter of the element $K \in \Ta$.
Then there exists a constant $C > 0$ such that
\begin{equation}\label{FriedPoin_mh}
  \norm{m_h}_{\Oma}^2 \leq C h_\ta^2 \sum_{K \in \Ta} \norm{\nabla m_h}_K^2
\end{equation}
holds for all $m_h \in M_p(\Ta)$.
\end{lemma}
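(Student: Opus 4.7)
The plan is a standard scaling plus compactness argument on a reference patch, exploiting the finite-dimensionality of $M_p(\Ta)$ and the weak continuity/boundary conditions \eqref{V1}--\eqref{V3}.

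First I would rescale to unit size. Set $\hat x = (x-x_{\ta})/h_{\ta}$, where $x_{\ta}$ is the position of the vertex $\ta$. This maps $\Oma$ to a patch $\hat\Oma$ of unit diameter, each $K\in\Ta$ to some $\hat K\in\hat\Ta$, and each $m_h\in M_p(\Ta)$ to an $\hat m_h\in M_p(\hat\Ta)$ in the analogously defined reference space. Under this change of variables $\|m_h\|_{\Oma}^2=h_{\ta}^2\|\hat m_h\|_{\hat\Oma}^2$ and $\|\nabla m_h\|_K^2=\|\hat\nabla\hat m_h\|_{\hat K}^2$, so \eqref{FriedPoin_mh} is equivalent to the scale-invariant statement
\begin{equation*}
\|\hat m_h\|_{\hat\Oma}^2 \leq C \sum_{\hat K\in\hat\Ta} \|\hat\nabla \hat m_h\|_{\hat K}^2.
\end{equation*}
By the shape regularity \eqref{shapereg} and local quasi-uniformity \eqref{locquasiuni}, the collection of admissible reference patches $\hat\Oma$ forms a compact family (the number of triangles in $\Ta$ is bounded uniformly, each triangle has diameter of order one and angles bounded below).

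Next I would verify that $|\hat m_h|_{\ast}:=\bigl(\sum_{\hat K}\|\hat\nabla \hat m_h\|_{\hat K}^2\bigr)^{1/2}$ is a norm on the finite-dimensional space $M_p(\hat\Ta)$. The only nontrivial part is checking that $|\hat m_h|_{\ast}=0$ implies $\hat m_h=0$. If the broken gradient vanishes, then $\hat m_h$ is a constant $c_{\hat K}$ on each element. For an interior edge $\Gamma\in\cEaI$ condition \eqref{V1} tested with $w_h=1$ yields $c_{\hat K^{+}}-c_{\hat K^{-}}=0$; since $\hat\Oma$ is connected, $\hat m_h\equiv c$ on the whole patch. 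If $\ta\in\NTI\cup\NTN$, condition \eqref{V3} forces $c|\hat\Oma|=0$, hence $c=0$. If $\ta\in\NTD$, there is at least one edge $\Gamma\in\cEaBD$ and \eqref{V2} tested with $w_h=1$ gives $c|\Gamma|=0$, so again $c=0$.

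Finally, by the equivalence of norms on the finite-dimensional space $M_p(\hat\Ta)$, there is a constant $C(\hat\Ta)$ for which the scaled inequality holds. The remaining step, which I expect to be the main obstacle, is to argue that $C(\hat\Ta)$ can be bounded uniformly over the whole shape-regular family. The difficulty is that the space $M_p(\hat\Ta)$ itself changes with the geometry, so one cannot directly invoke continuity of a single norm. One proceeds by enumerating the finitely many combinatorial types of patch (bounded cardinality thanks to shape regularity), parametrizing each type by the coordinates of its vertices inside the compact set of admissible, non-degenerate configurations, and observing that the norm equivalence constant depends continuously on these parameters by a standard perturbation argument on a pull-back to a single triangle. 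Compactness of the parameter set then yields a uniform bound $C$, concluding the proof of \eqref{FriedPoin_mh}.
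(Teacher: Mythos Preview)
Your argument is correct, and it takes a genuinely different route from the paper. The paper does not scale and invoke compactness; instead it applies directly the discrete nonconforming Poincar\'e inequality (for $\ta\in\NTI\cup\NTN$, where \eqref{V1} and \eqref{V3} are available) and the discrete nonconforming Friedrichs inequality (for $\ta\in\NTD$, where \eqref{V1} and \eqref{V2} are available) from Vohral\'{\i}k's paper on broken $H^1$ inequalities. Those results come with explicit constants that, combined with shape regularity \eqref{shapereg}, give $\CP,\CFD\le C h_\ta^2$ immediately, so the whole proof is a two-line citation.

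Your approach is more self-contained: the kernel computation using \eqref{V1}--\eqref{V3} is clean, and the scaling reduces everything to a unit patch. The cost is exactly the step you flag---uniformity of the norm-equivalence constant over a family of finite-dimensional spaces that themselves vary with the geometry. Your sketch (finitely many combinatorial types, compact vertex-parameter set bounded away from degeneracy by shape regularity, continuity of the constraint maps and hence of the Rayleigh quotient) is the standard way to close this and is sound, but it is noticeably more work than citing an off-the-shelf broken Poincar\'e/Friedrichs inequality. The paper's route also makes the dependence of $C$ on the mesh more transparent, since the cited inequalities come with explicit expressions for the constants.
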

\begin{proof}
   We denote by $H^1(\Ta) = \{ v \in L^2(\Oma): v|_K \in H^1(K) \ \forall K \in \Oma \}$
   the broken Sobolev space and consider the discrete nonconforming
   Poincar\'e inequality
   \begin{equation}\label{discr_Poinc}
       \|v_h\|_{\Oma}^2 \leq \CP \sum_{K \in \Ta} \|\nabla v_h\|_{K}^2 + \frac{4}{|\Oma|} (v_h, 1)_{\Oma}^2
   \end{equation}
   for all $v_h \in \{ z_h \in H^1(\Ta): ([z_h]_\Gamma,1)_{\Gamma} = 0 \quad \forall \Gamma \in \cEaI\}$.
   This inequality is given in \cite[Thm. 8.1]{Voh_on_discr_P-F_ineq_nonc_approx_H1_05}
   together with an explicit expression for $\CP$. This expression and the shape regularity \eqref{shapereg}
   then yield $\CP \leq C h_\ta^2$, where $C$ is independent of $h_\ta$.
      
   Similarly, the discrete nonconforming Friedrichs' inequality
   \begin{equation}\label{discr_Fried}
       \|v_h\|_{\Oma}^2 \leq \CFD \sum_{K \in \Ta} \|\nabla v_h\|_{K}^2 \quad
   \end{equation}
   holds for all $v_h \in \{ z_h \in H^1(\Ta): ([z_h]_\Gamma,1)_{\Gamma} = 0 \quad \forall \Gamma \in \cEaI \cup \cEaBD \}$,
   see \cite{Voh_on_discr_P-F_ineq_nonc_approx_H1_05}, and
   the constant $\CFD$ again satisfies $\CFD \leq C h_\ta^2$.

   Now, consider $\ta \in \NTI \cup \NTN$.
   In this case we derive \eqref{FriedPoin_mh} by applying \eqref{discr_Poinc}, which we can use because of \eqref{V1} and \eqref{V3}. 
   Similarly, if $\ta \in \NTD$ then we obtain \eqref{FriedPoin_mh} by \eqref{discr_Fried}, because $m_h$ satisfies \eqref{V1} and \eqref{V2}.
%
\end{proof}

Finally, we are in the position to prove that the error indicator \eqref{eq:etaK} is bounded from above by a constant multiple of the classical residual indicators \eqref{eq:etaRK}.
For that purpose we recall definitions \eqref{eq:omK} and \eqref{eq:TomK} of the patch of elements $\omK$ and the corresponding set $\cT(\omK)$ of elements contained in this patch.

\begin{theorem}\label{th:etaKetaRK}
   Let $\cF$ be a shape regular family of triangulations, see \eqref{shapereg}.
   Let the error indicator $\eta_K$ be given by \eqref{eq:etaK} with the flux reconstruction \eqref{eq:defq}.
   Let the classical residual error indicator $\eta_{R,K}$ be given by \eqref{eq:etaRK}.
   Then there exists a constant $C>0$ such that
   \begin{equation} \label{class_ind_vs_new_indicator}
     \eta_K^2 \leq C \sum_{K' \in \cT(\omK)} \eta_{R,K'}^2
     \quad\forall K\in\cT \text{ and } \forall \cT \in \cF.
   \end{equation}
\end{theorem}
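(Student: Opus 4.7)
The plan is to localize the patch-based estimator $\eta_K$ to individual vertex contributions, express each contribution via the nonconforming projection characterization \eqref{postproc_1}, and then exploit \eqref{proj_method} together with the broken Poincar\'e--Friedrichs inequality \eqref{FriedPoin_mh} to dominate the resulting quantities by $R$ and $J$. The first step would be to use the partition of unity $\sum_{\ta\in\NTK}\HF = 1$ on $K$, together with $\operatorname{supp}\tqa\subset\overline{\Oma}$, to write
\[
\nabla\uTf - \cA^{-1}\tqT = \sum_{\ta\in\NTK}\bigl(\HF\nabla\uTf - \cA^{-1}\tqa\bigr) \quad \text{on } K,
\]
so that a triangle inequality reduces the claim to proving $\|\HF\cA\nabla\uTf - \tqa\|_{\cA^{-1},\Oma}^2 \le C\sum_{K'\in\Ta}\eta_{R,K'}^2$ for each $\ta\in\NTK$.

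Second, I would fix an arbitrary $\ttqa\in\bWa$ and invoke \eqref{postproc_1} to split, elementwise, $\HF\cA\nabla\uTf - \tqa = (I-\PiWK)(\HF\cA\nabla\uTf - \ttqa) + \PiWK(\cA\nabla\tilde{\da})$. Since $\PiWK$ is the $(\cA^{-1}\cdot,\cdot)_K$-orthogonal projection onto $\RT(K)$, this splitting is $\cA^{-1}$-orthogonal and the resulting Pythagorean identity reduces the problem to bounding the nonconforming correction $\|\PiWK(\cA\nabla\tilde{\da})\|_{\cA^{-1},\Oma}$. For this I would use the self-adjoint projection identity $(\PiWK(\cA\nabla\tilde{\da}),\nabla\tilde{\da})_K = \|\PiWK(\cA\nabla\tilde{\da})\|_{\cA^{-1},K}^2$ and test \eqref{proj_method} with $\xi=\tilde{\da}$. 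Cauchy--Schwarz, Young's inequality, Lemma~\ref{le:FriedPoin_mh} (giving $\|\tilde{\da}\|_\Oma\le Ch_\ta\|\nabla\tilde{\da}\|$), and a stability argument bridging $\|\nabla\tilde{\da}\|$ and $\|\PiWK(\cA\nabla\tilde{\da})\|_{\cA^{-1}}$ via the inf-sup condition \eqref{disc_inf-sup} applied to \eqref{min_prob_qaO} should together yield
\[
\|\PiWK(\cA\nabla\tilde{\da})\|_{\cA^{-1},\Oma} \le C\bigl(\|\HF\cA\nabla\uTf-\ttqa\|_{\cA^{-1},\Oma} + h_\ta\|\rTa+\ddiv\ttqa\|_\Oma\bigr).
\]

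Third, because $\cA$ is piecewise constant and $\HF$ piecewise linear, a direct product-rule computation yields $\rTa = -\HF R - \ddiv(\HF\cA\nabla\uTf)$ elementwise and $[\HF\cA\nabla\uTf]_\Gamma\cdot\tn_\Gamma = \HF J|_\Gamma$ on interior edges. I would then pick $\ttqa$ as a Raviart--Thomas interpolant of $\HF\cA\nabla\uTf$ whose normal trace equals $\Pi_\Gamma(\gTa)$ on Neumann edges and an averaged polynomial continuation on interior edges; this arranges $\ddiv\ttqa = \ddiv(\HF\cA\nabla\uTf)$ on each element, so that $\rTa+\ddiv\ttqa = -\HF R$, while the difference $\HF\cA\nabla\uTf - \ttqa$ is controlled elementwise by the scaled jumps $h_K^{1/2}\|J\|_{\partial K}$ via standard trace/inverse estimates on the shape-regular mesh. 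Feeding these bounds back, one arrives at $\|\HF\cA\nabla\uTf - \tqa\|_{\cA^{-1},\Oma}^2 \le C\sum_{K'\in\Ta}\eta_{R,K'}^2$, and summing over $\ta\in\NTK$ with the inclusion $\Ta\subset\cT(\omK)$ delivers \eqref{class_ind_vs_new_indicator}.

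The hardest step will be in the second paragraph: the test $\xi=\tilde{\da}$ in \eqref{proj_method} naturally produces the \emph{projected} quantity $\|\PiWK(\cA\nabla\tilde{\da})\|_{\cA^{-1}}$, whereas Lemma~\ref{le:FriedPoin_mh} needs the full broken gradient $\|\nabla\tilde{\da}\|$. Reconciling this mismatch—while simultaneously respecting the zero-jump and zero-mean conditions \eqref{V1}--\eqref{V3} that define $M_p(\Ta)$ and make the broken Poincar\'e--Friedrichs inequality applicable—constitutes the core technical difficulty of the proof.
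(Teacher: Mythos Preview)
Your opening step—the partition of unity reduction to bounding $\|\HF\cA^{1/2}\nabla\uTf-\cA^{-1/2}\tqa\|_{\Oma}$ for each $\ta\in\NTK$—matches the paper exactly. After that the two routes diverge, and there is a genuine gap in your plan.

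The paper does not introduce an auxiliary $\ttqa$ at this stage. Because $\HF\cA\nabla\uTf$ and $\ttqa$ both already lie in $\RT(K)$ (recall $\cA$ is piecewise constant and $\HF\nabla\uTf\in[P_p(K)]^2$), your term $(I-\PiWK)(\HF\cA\nabla\uTf-\ttqa)$ is identically zero, so the Pythagorean split collapses to the bare identity $\HF\cA\nabla\uTf-\tqa=\PiWK(\cA\nabla\tilde\da)$, which the paper uses directly. The paper then invokes the norm equivalence \eqref{equivalence} (imported from \cite[Lemma~5.4]{Voh_un_apr_apost_MFE_10}) to rewrite the patch estimator as a supremum over $m_h\in M_p(\Ta)$ of $(\HF\cA\nabla\uTf-\tqa,\nabla m_h)_{\Oma}/\|\nabla m_h\|_{\Oma}$. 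Green's theorem on this pairing, together with \eqref{jump_qa}--\eqref{jump_grad_uB} and \eqref{eq:testPp}, produces $R$ and $J$ directly, and Cauchy--Schwarz, the inverse trace inequality, and Lemma~\ref{le:FriedPoin_mh} finish the job. No explicit $\ttqa$ with prescribed divergence and averaged normal traces is ever constructed.

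Your alternative—testing \eqref{proj_method} with $\xi=\tilde\da$ and then building a specific $\ttqa$—could in principle be pushed through, but the bridging you flag as hardest cannot come from \eqref{disc_inf-sup}. That inf-sup condition concerns the pairing between $\PpTast$ and divergences of $\bWa^0$-fields; it says nothing about the ratio between the full broken gradient $\|\nabla\tilde\da\|_{\Oma}$ and its $\RT$-projected version $\|\PiWTa(\cA\nabla\tilde\da)\|_{\cA^{-1},\Oma}$. What your testing argument actually needs to divide through is precisely \eqref{equivalence}, a separate structural fact about $\nabla M_p(K)$ and $\RT(K)$ that the paper cites rather than proves. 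Once \eqref{equivalence} is in hand, both approaches close, but the paper's supremum route is more direct: it bypasses the delicate construction of $\ttqa$ (with simultaneous control of divergence, normal traces in $\bWa$, and jump-bounded deviation from $\HF\cA\nabla\uTf$) and the interpolation estimates that would go with it.
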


\begin{proof}
  Let $\cT \in \cF$ be fixed. Given $K\in\cT$,
  we straightforwardly estimate $\eta_K$ as
  \begin{multline}
    \label{eq:etaKsuma}
    \eta_K 
       = \left\| \cA^{\frac12} \nabla \uTf - \cA^{-\frac12} \tqT\right\|_{K}
       = \left\|\sum_{\ta \in \NTK} \HF \cA^{\frac{1}{2}} \nabla \uTf - \cA^{-\frac{1}{2}} \tqa \right\|_K
       \\
       \leq \sum_{\ta \in \NTK} \left\|\HF \cA^{\frac{1}{2}} \nabla \uTf - \cA^{-\frac{1}{2}} \tqa \right\|_K
       \leq \sum_{\ta \in \NTK} \left\|\HF \cA^{\frac{1}{2}} \nabla \uTf - \cA^{-\frac{1}{2}} \tqa \right\|_{\Oma}.
  \end{multline}
  Thus, we fix a vertex $\ta \in \NT$ and estimate $\left\|\HF \cA^{\frac{1}{2}} \nabla \uTf - \cA^{-\frac{1}{2}} \tqa \right\|_{\Oma}$ as in \cite{Ern_Voh_adpt_IN_13}.

   In order to do that, we consider $\tilde{\da} \in M_p(\Ta)$ defined in \eqref{proj_method}
   and apply \cite[Lemma 5.4, statement (5.3)]{Voh_un_apr_apost_MFE_10}.
   We obtain
   \begin{equation}\label{equivalence}
      \| \PiWTa (-\cA \nabla \tilde{\da}) \|_{\Oma} \geq C 
      \| \cA^{\frac{1}{2}} \nabla \tilde{\da}\|_{\Oma},
   \end{equation}
   where the projection
   $\PiWTa$ is defined piecewise as $(\PiWTa \bw)|_K = \PiWK \bw$ for all $K \in \Ta$ and $\bw \in [L^2(\Oma)]^2$.
   Since $\cA^{-1}(\HF \cA \nabla \uTf - \tqa) \in \RT(K)$ for all $K \in \Ta$, we have
   \begin{equation}\label{projRTNp}
     \left( \cA^{-1}\left( \HF \cA \nabla \uTf - \tqa \right), \PiWTa (\cA \nabla \tilde{\da}) \right)_{\Oma} = \left( \HF \cA \nabla \uTf - \tqa, \nabla \tilde{\da} \right)_{\Oma}.
   \end{equation}
   Using the fact that \eqref{postproc_1} is equivalent to
   $
   \PiWK(\cA \nabla \tilde{\da}) = \HF \cA \nabla \uTf|_K - \tqa|_K
   $
   together with \eqref{projRTNp}, uniform positive definiteness of $\cA$,
   and \eqref{equivalence}, we obtain
   \begin{multline} \label{express_flux_est}
     \left\|\HF \cA^{\frac{1}{2}} \nabla \uTf - \cA^{-\frac{1}{2}} \tqa \right\|_{\Oma}
     = \left(\cA^{-1}\left( \HF \cA \nabla \uTf - \tqa \right),
              \frac{\PiWTa (\cA \nabla \tilde{\da})}
                   {\| \cA^{-\frac{1}{2}} \PiWTa (\cA \nabla \tilde{\da}) \|_{\Oma}}\right)_{\Oma}
   \\
     \leq C \left(\HF \cA \nabla \uTf - \tqa, \frac{\nabla \tilde{\da}}{\| \nabla \tilde{\da}\|_{\Oma}}\right)_{\Oma}
     \leq C \sup_{\substack{m_h \in M_p(\Ta) \\ \| \nabla m_h \|_{\Oma} = 1}} \left( \HF \cA \nabla \uTf - \tqa, \nabla m_h \right)_{\Oma}.
   \end{multline}

   Considering any $m_h \in M_p(\Ta)$, we apply the Green theorem,
   \eqref{eq:testPp},   
   \eqref{jump_qa}, \eqref{jump_grad_uB}, and derive equality
   \begin{multline*} 
    \left( \HF \cA \nabla \uTf - \tqa, \nabla m_h \right)_{\Oma}
    = \sum_{K \in \Ta} \left\{ -(\ddiv(\HF \cA \nabla \uTf - \tqa), m_h)_K \right.
    \\
    \left. + ((\HF \cA \nabla \uTf - \tqa) {\cdot} \tn_K, m_h)_{\pt K} \right\}
    = \sum_{K \in \Ta}
    \left( -\ddiv(\HF \cA \nabla \uTf) - \rTa, \Pi_p^K m_h \right)_K
    \\
    + \sum_{\Gamma \in \cEaI} \left( [\HF \cA \nabla \uTf]_\Gamma {\cdot} \tn_{\Gamma}, m_h \right)_{\Gamma}
    + \sum_{\Gamma \in \cEaBN} \left( \HF \cA \nabla \uTf {\cdot} \tn - \Pi_\Gamma(\gTa), m_h \right)_{\Gamma},
   \end{multline*}
   where $\Pi_p^K$ denotes the $L^2(K)$-projection onto $P_p(K)$.
   Using identities $-\ddiv(\HF \cA \nabla \uTf) - \rTa = \HF R$, see \eqref{eq:ra} and \eqref{eq:R},
   and $\HF \cA \nabla \uTf {\cdot} \tn|_{\Gamma} = \Pi_\Gamma( \HF \cA \nabla \uTf {\cdot} \tn|_{\Gamma})$
   together with definitions \eqref{eq:ga} and \eqref{eq:J}, we can express the above result in terms of the classical residual $R$ and jumps $J$ as
   \begin{multline} \label{express_flux_est_2}
    \left( \HF \cA \nabla \uTf - \tqa, \nabla m_h \right)_{\Oma}
    \\
    = \sum_{K \in \Ta}
    \left( \HF R, \Pi_p^K m_h \right)_K
    + \sum_{\Gamma \in \cEaI} \left( \HF J, m_h \right)_{\Gamma}
    + \sum_{\Gamma \in \cEaBN} \left( \Pi_\Gamma (\HF J), m_h \right)_{\Gamma}.
   \end{multline}

   Now, we recall the inverse trace inequality
   \begin{equation}
     \label{eq:invtrace}
     \|m_h \|_{\Gamma} \leq C h_{\Gamma}^{-\frac{1}{2}}  \|m_h \|_{K}
   \end{equation}
   which holds for any  polynomial $m_h$ on a triangle $K$ and its edge $\Gamma$
   (see 
   \cite{WarHes2003} for an explicit value of $C$).
   Using the Cauchy--Schwarz inequality,
   this inverse trace inequality,
   \eqref{FriedPoin_mh},
   quasi-uniformity of triangulations~\eqref{locquasiuni},
   and properties of projections $\Pi_p^K$ and $\Pi_\Gamma$,
   we estimate \eqref{express_flux_est_2} as follows:
   \begin{multline} \label{express_flux_est_3}
    \left( \HF \cA \nabla \uTf - \tqa, \nabla m_h \right)_{\Oma}
    \leq \left\{\sum_{K \in \Ta} h_K^{-2} \|m_h\|^2_K \right\}^\frac{1}{2}
           \left\{\sum_{K \in \Ta} h_K^{2} \| \HF R\|^2_K \right\}^\frac{1}{2}
     \\
     + \left\{\sum_{\Gamma \in \cEaI} h_{\Gamma}^{-1} \|m_h\|^2_{\Gamma} \right\}^\frac{1}{2} \left\{ \sum_{\Gamma \in \cEaI} h_{\Gamma} \| \HF J \|^2_{\Gamma} \right\}^\frac{1}{2}
     + \left\{\sum_{\Gamma \in \cEaBN} h_{\Gamma}^{-1} \|m_h\|^2_{\Gamma} \right\}^\frac{1}{2}
     \left\{ \sum_{\Gamma \in \cEaBN} h_{\Gamma} \| \Pi_\Gamma( \HF J ) \|^2_{\Gamma} \right\}^\frac{1}{2}
     \\
     \leq C \|\nabla m_h\|_{\Oma} \left\{ \sum_{K \in \Ta} h_K^{2} \|R\|^2_K
     +
     \sum_{\Gamma \in \cEaI} h_{\Gamma} \| J \|^2_{\Gamma}
     + \sum_{\Gamma \in \cEaBN} h_{\Gamma}
        \| J \|^2_{\Gamma}
     \right\}^\frac{1}{2}.
   \end{multline}

   Combining \eqref{express_flux_est_3} and \eqref{express_flux_est} finally yields
   \begin{multline} \label{express_flux_est_fin}
    \left\|\HF \cA^{\frac{1}{2}} \nabla \uTf - \cA^{-\frac{1}{2}} \tqa \right\|_{\Oma}
     \leq C \left\{ \sum_{K \in \Ta} h_K^{2} \|R\|^2_K
     + \sum_{\Gamma \in \cEaI \cup \cEaBN} h_{\Gamma} \| J \|^2_{\Gamma}
     \right\}^\frac{1}{2}.
   \end{multline}
   Using this estimate in \eqref{eq:etaKsuma} finishes the proof.

\end{proof}

Theorem~\ref{th:etaKetaRK} yields immediately the local efficiency of the error indicators $\eta_K$ provided the classical residual estimator $\eta_{R,K}$ is locally efficient.
Combining this result with Lemma~\ref{le:effclasres}, we obtain the following corollary,
where we use a wider patch of elements defined as
\begin{equation}
  \label{eq:tomK}
  \tomK = \interior\bigcup \{ K' \in \cT : K' \cap \omK \neq \emptyset \}.
\end{equation}


\begin{corollary}[Efficiency of indicators]
Let $w$ be given by \eqref{eq:defw} 
with bilinear forms \eqref{eq:blf}, \eqref{eq:blfb}, and with $\lambda_* = \lamTf$ and $u_* = \uTf$.
Then the error indicators given by \eqref{eq:etaK} satisfy
\begin{equation}
  \label{eq:effetaK}
  \eta_K \leq C \norm{w}_{a,\tomK}.
\end{equation}
\end{corollary}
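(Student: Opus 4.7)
The plan is to directly chain the two ingredients already established: Theorem~\ref{th:etaKetaRK}, which bounds $\eta_K$ by the classical residual indicators on the patch $\cT(\omK)$, and Lemma~\ref{le:effclasres}, which in turn bounds each such classical residual indicator by $\|w\|_{a,\omega_{K'}}$. Composing the two estimates will immediately produce a sum over $K'\in\cT(\omK)$ of terms of the form $\|w\|_{a,\omega_{K'}}^2$, and the task is then to reduce this sum to the single quantity $\|w\|_{a,\tomK}^2$.

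More precisely, I would first write
$$
  \eta_K^2 \leq C \sum_{K'\in\cT(\omK)} \eta_{R,K'}^2
         \leq C \sum_{K'\in\cT(\omK)} \|w\|_{a,\omega_{K'}}^2,
$$
using Theorem~\ref{th:etaKetaRK} and then Lemma~\ref{le:effclasres} applied to every $K'\in\cT(\omK)$. The next step is to observe that if $K'\in\cT(\omK)$, i.e.\ $K'\cap K\neq\emptyset$, then every element sharing a vertex with $K'$ touches $\omK$, so $\omega_{K'}\subset\tomK$ by the very definition~\eqref{eq:tomK}. Hence each term $\|w\|_{a,\omega_{K'}}^2$ is bounded by the contribution of $\|w\|_{a,\tomK}^2$ restricted to the elements of $\omega_{K'}$.

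To turn this into a clean bound I would use the standard finite-overlap property guaranteed by shape regularity~\eqref{shapereg}: each element of $\tomK$ belongs to only a uniformly bounded number of patches $\omega_{K'}$ with $K'\in\cT(\omK)$. Splitting $\|w\|_{a,\omega_{K'}}^2$ as a sum of contributions from the individual elements in $\omega_{K'}$ and exchanging the two sums, this overlap bound gives
$$
  \sum_{K'\in\cT(\omK)} \|w\|_{a,\omega_{K'}}^2 \leq C \|w\|_{a,\tomK}^2,
$$
with $C$ depending only on the shape-regularity constant $C_{\mathrm{s}}$.

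The proof is therefore purely a book-keeping assembly of previous results, with no genuine obstacle; the only point requiring a moment of care is the overlap argument, which is where uniformity of the final constant across the family $\cF$ gets pinned down.
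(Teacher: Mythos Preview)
Your proposal is correct and follows exactly the paper's approach: the paper's proof is the single sentence ``This is an immediate consequence of \eqref{class_ind_vs_new_indicator} and \eqref{eq:effetaRK},'' and you have spelled out that consequence. One minor simplification: rather than an element-by-element overlap count, you can simply note that $\omega_{K'}\subset\tomK$ gives $\|w\|_{a,\omega_{K'}}^2\leq\|w\|_{a,\tomK}^2$ for each $K'\in\cT(\omK)$, and the number of such $K'$ is uniformly bounded by shape regularity.
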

\begin{proof}
This is an immediate consequence of \eqref{class_ind_vs_new_indicator} and \eqref{eq:effetaRK}.
\end{proof}

\section{Adaptive algorithm}
\label{se:adaptalg}

In this section we consider a general adaptive algorithm, list conditions guaranteeing its convergence, and verify that an algorithm based on error indicators \eqref{eq:etaK} satisfies these conditions and is convergent.

We consider the following standard adaptive loop:
\begin{itemize}
\item[1.] $(\lamTkf,\uTkf) = \SOLVE(\cT_k)$
\item[2.] $\{\eta_K^{\cT_k}\} = \ESTIMATE(\cT_k, \lamTkf, \uTkf)$
\item[3.] $\cM_k = \MARK(\cT_k, \{\eta_K^{\cT_k}\} )$
\item[4.] $\cT_{k+1} = \REFINE(\cT_k, \cM_k)$
\end{itemize}

We start this loop with an initial mesh $\cT_0$. Module \SOLVE\ returns an approximate solution on the actual mesh $\cT_k$. Module \ESTIMATE\ computes the error indicators $\eta_K^{\cT_k}$ for all $K \in \cT_k$.
Module \MARK\ determines the set of elements $\cM_k$, which are subsequently refined by the module \REFINE\ and a new mesh $\cT_{k+1}$ is constructed.

We will use the result \cite{GarMor2011} to show the convergence of this algorithm for eigenvalue problems.
Before we list the required assumptions on respective modules of the adaptive algorithm,
we introduce certain notions.

Let $\cT_0$ be an arbitrary but fixed initial mesh. Let $\cF(\cT_0)$ stands for the family of all meshes that can be produced by a finite number of successive applications of \REFINE\ with all possible sets of marked elements. Given a function $u\in V$, we define the residual $\bR(u): V \rightarrow \R$ corresponding to the eigenproblem \eqref{eq:eigp} as the linear and continuous functional on $V$ given by
\begin{equation}
  \label{eq:residual}
  \langle \bR(u),v \rangle = a(u,v) - \lambda b(u,v) \quad \forall v \in V,
\end{equation}
where $\lambda = a(u,u)/b(u,u)$.
Considering a mesh $\cT \in \cF(\cT_0)$ and an approximate eigenpair $\lamTf \in \R$, $\uTf \in \VT$ given by \eqref{eq:discreigp}, we say that error indicators $\eta_K$ provide an upper bound for the residual \cite{GarMor2011} if
there exists a constant $C>0$ (uniform over the family $\cF(\cT_0)$) such that
\begin{equation}
  \label{eq:ubr}
  | \langle \bR(\uTf),v \rangle | \leq C \sum_{K\in\cT} \eta_K \| \nabla v \|_{\omK}
  \quad\forall v \in V.
\end{equation}
Similarly, indicators $\eta_K$ are said to be stable with respect to $\uTf$ \cite{GarMor2011} if
there exists a constant $C>0$ (uniform over the family $\cF(\cT_0)$) such that
\begin{equation}
  \label{eq:stability}
  \eta_K \leq C \| \uTf \|_{H^1(\omK)}.
\end{equation}
Now, we are ready to list the assumptions on the respective modules of the adaptive algorithm.

(\Aslv) Given a mesh $\cT \in \cF(\cT_0)$, module \SOLVE\ provides an approximate eigenpair $\lamTf \in \R$ and $\uTf \in \VT$ satisfying \eqref{eq:discreigp}, where $\VT$ is given by \eqref{eq:VT}. Note that we require conformity, i.e. $\VT \subset H^1(\Omega)$. Consequently, if $\cTstar$ is a refinement of $\cT$ then $\VT \subset \VTstar$ and the minimum-maximum principle implies $\lambda_i \leq \lambda^{\cT_*}_i \leq \lamT_i$ for all $i=1,2,\dots,\operatorname{dim} \VT$.

(\Aest) Given a mesh $\cT \in \cF(\cT_0)$ and $\lamTf \in \R$, $\uTf \in \VT$ produced by \SOLVE, module \ESTIMATE\ produces error indicators $\eta^\cT_K$ that provide an upper bound for the residual \eqref{eq:ubr} and that are stable with respect to $\uTf$ in the sense of \eqref{eq:stability}.

(\Amrk) Module \MARK\ is assumed to be reasonable \cite{GarMor2011}. This means that
the set of elements $\cM_k$ marked for the refinement contains at least one element $K^\mathrm{max}$ such that $\eta_{K^\mathrm{max}} = \max_{K \in \cT_k} \eta_K$.

(\Aref) Module \REFINE\ has to refine all marked elements at least once.
Module \REFINE\ is assumed to produce shape regular families of triangulations, i.e.,
the family $\cF(\cT_0)$ is shape regular in the sense of \eqref{shapereg}.
Further, module \REFINE\ has to satisfy the assumption of the unique quasi-regular element subdivision of \cite{Mor_Sieb_Vees_A_basic_res_conf_adap_FEs_08}.
This means that there exit constants $q_1,q_2 \in (0,1)$ such that whenever an element $K$ is refined by
\REFINE\ into $n(K)$ subelements $K'_1$, $K'_2$, \dots, $K'_{n(K)}$ then
\begin{equation}
  q_1 |K| \leq |K'_i| \leq q_2 |K| \quad\forall i=1,2,\dots,n(K).
  \label{quasiregsplit}
\end{equation}
Note that by a refinement of an element we automatically mean that the resulting subelements form a partition of $K$, i.e.
$$
  K = K'_1 \cup K'_2 \cup \cdots \cup K'_{n(K)}
  \quad\text{and}\quad
  |K| = |K'_1| + |K'_2| + \cdots + |K'_{n(K)}|.
$$
Furthermore, due to the conformity, all meshes produced by \REFINE\ have to be compatible with $\GammaD$ and $\GammaN$.

These assumptions guarantee convergence of the above adaptive algorithm as it is shown in \cite{GarMor2011} and \cite{Mor_Sieb_Vees_A_basic_res_conf_adap_FEs_08}. We now verify that these assumptions are satisfied for the eigenvalue problem \eqref{eq:eigp} and for the error indicators \eqref{eq:etaK}.

Assumptions (\Aslv) on the module \SOLVE\ are satisfied by a standard implementation of the conforming finite element method of order $p$. Note that strictly speaking, we assume that there are no round-off errors and that the corresponding matrix eigenvalue problems are solved exactly.
Module \MARK\ satisfies the assumption (\Amrk) virtually always, because almost all existing marking strategies are reasonable. In numerical examples below, we implement the bulk criterion of \cite{Dorfler:1996}.
Concerning module \REFINE, the assumption of quasi-regular element subdivision is very natural and it is satisfied by all standard subdivision strategies. However, the crucial assumption of the shape regularity is often not easy to prove.
We consider the newest vertex bisection procedure to generate shape regular families of triangulations, see e.g. \cite{SchSie2005}.

Finally, we need to verify the assumptions on the module \ESTIMATE, namely the upper bound on the residual and the stability. The following two lemmas show that indicators \eqref{eq:etaK} satisfy these assumptions.
\begin{lemma}[Guaranteed upper bound for the residual]
Let $\cT \in \cF(\cT_0)$ be a triangulation of $\Omega$.
Let $\lamTf \in \R$, $\uTf \in \VT$ with $i\geq 1$ be an approximate eigenpair given by \eqref{eq:discreigp}. Let the residual $\bR(\uTf)$ be given by \eqref{eq:residual}, and the indicators $\eta_K$ and the estimator $\eta$ by \eqref{eq:etaK}.
Then indicators $\eta_K$ provide the upper bound on the residual \eqref{eq:ubr}.
Moreover,
\begin{equation} \label{eq:gubr}
  | \langle \bR(\uTf),v \rangle | \leq \sum_{K\in\cT} \eta_K \| \nabla v \|_{\cA,K}
  \quad\text{and}\quad
  \| \bR(\uTf) \|_{a'} = \| w \|_a,
\end{equation}
where $\|\cdot\|_{a'}$ is the dual norm corresponding to $V$ endowed with $\|\cdot\|_a$.
\end{lemma}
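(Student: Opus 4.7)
The plan is to reduce the residual $\bR(\uTf)$ to the representative $w$ from \eqref{eq:defw} and then to exploit the divergence structure of the flux reconstruction exactly as in Lemma~\ref{le:normw}. Since $\uTf$ is the $i$-th discrete eigenpair, one readily checks $\lamTf = a(\uTf,\uTf)/b(\uTf,\uTf)$, so the definition \eqref{eq:residual} of $\bR(\uTf)$ combined with \eqref{eq:defw} yields the identity
\begin{equation*}
  \langle \bR(\uTf),v\rangle = a(\uTf,v) - \lamTf b(\uTf,v) = a(w,v) \quad \forall v\in V.
\end{equation*}

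Next I would insert the $\Hdiv$-flux reconstruction $\tqT$ and integrate by parts element by element, as in the proof of Lemma~\ref{le:normw}. Writing $a(\uTf,v) - \lamTf b(\uTf,v)$ with the bilinear forms \eqref{eq:blf}--\eqref{eq:blfb} and adding and subtracting $\tqT$, the divergence theorem gives
\begin{equation*}
  a(w,v) = (\cA\nabla \uTf - \tqT,\nabla v) + (c\uTf - \lamTf\beta_1\uTf - \ddiv\tqT, v) + (\alpha\uTf - \lamTf\beta_2\uTf + \tqT\cdot\tn,v)_{\GammaN}.
\end{equation*}
The second and third terms vanish by Lemma~\ref{le:etaR=0}, leaving $\langle \bR(\uTf),v\rangle = (\cA\nabla \uTf - \tqT,\nabla v) = (\cA(\nabla \uTf - \cA^{-1}\tqT),\nabla v)$.

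Applying the Cauchy--Schwarz inequality in the $\cA$-weighted $L^2$ inner product element by element, I immediately obtain
\begin{equation*}
  |\langle\bR(\uTf),v\rangle| \leq \sum_{K\in\cT}\|\nabla \uTf - \cA^{-1}\tqT\|_{\cA,K}\,\|\nabla v\|_{\cA,K} = \sum_{K\in\cT}\eta_K\|\nabla v\|_{\cA,K},
\end{equation*}
which is the first claim of \eqref{eq:gubr}. The uniform spectral bounds on $\cA$ turn $\|\nabla v\|_{\cA,K}$ into a constant multiple of $\|\nabla v\|_K \leq \|\nabla v\|_{\omK}$, establishing the generic upper bound \eqref{eq:ubr} with a constant that is uniform over $\cF(\cT_0)$.

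For the dual-norm identity, I would use that $a(\cdot,\cdot)$ is a scalar product on $V$ and $w$ is its Riesz representative of the functional $\bR(\uTf)$. Then
\begin{equation*}
  \|\bR(\uTf)\|_{a'} = \sup_{v\in V\setminus\{0\}}\frac{|\langle\bR(\uTf),v\rangle|}{\|v\|_a} = \sup_{v\in V\setminus\{0\}}\frac{|a(w,v)|}{\|v\|_a} = \|w\|_a,
\end{equation*}
where Cauchy--Schwarz bounds the supremum above by $\|w\|_a$ and the choice $v=w$ realizes it. No step looks difficult; the only point worth a second of attention is confirming that all three surface/volume residual terms in the flux reconstruction identity actually vanish, which is precisely what Lemma~\ref{le:etaR=0} provides.
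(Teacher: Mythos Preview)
Your proof is correct and follows essentially the same route as the paper: identify $\langle \bR(\uTf),v\rangle = a(w,v)$, reduce to $(\cA\nabla\uTf-\tqT,\nabla v)$ via the divergence theorem and Lemma~\ref{le:etaR=0}, apply Cauchy--Schwarz elementwise in the $\cA$-weighted inner product, and then read off the dual-norm identity from the Riesz representation. The only difference is that the paper compresses the integration-by-parts and cancellation steps into a reference back to Lemma~\ref{le:normw}, whereas you spell them out explicitly.
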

\begin{proof} The proof is a straightforward variant of Lemma~\ref{le:normw}:
\begin{equation} \label{eq:gubr2}
  | \langle \bR(\uTf),v \rangle | = | a(w,v) | = \left| \sum_{K\in\cT} (\cA \nabla \uTf - \tqT, \nabla v) \right|
  \leq \sum_{K\in\cT} \| \cA \nabla \uTf - \tqT \|_{\cA,K}  \| \nabla v \|_{\cA,K},
\end{equation}
which is the first estimate in \eqref{eq:gubr}.
The second statement in \eqref{eq:gubr} follows from the identity
$ \langle \bR(\uTf),v \rangle  = a(w,v)$ for all $v \in V$.
%
Finally, the upper bound on the residual \eqref{eq:ubr} follows from \eqref{eq:gubr2}, because
$\| \nabla v \|_{\cA,K} \leq \| \cA^\frac{1}{2} \|_{L^\infty(K)} \| \nabla v \|_K$.
\end{proof}

\begin{lemma}[Stability of classical residual indicators]\label{le:classtab}
Let $N_0 = \operatorname{dim} V^{\cT_0}$ be the number of degrees of freedom corresponding to the initial mesh $\cT_0$.
Let $\cT \in \cF(\cT_0)$ be a mesh and let $\lamTf \in \R$, $\uTf \in \VT$ with $1\leq i \leq N_0$ be an approximate eigenpair given by \eqref{eq:discreigp}.
Let $K\in \cT$ be fixed.
Let the classical residual indicators $\eta_{R,K}$ be given by \eqref{eq:etaRK}.
Then there exists $C>0$, which depends on problem data and on the initial mesh, but is independent of $h_K$, such that
$$
  \eta_{R,K} \leq C \| \uTf \|_{H^1(\omega_K)}.
$$
\end{lemma}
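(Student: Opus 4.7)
The plan is to bound the two contributions to $\eta_{R,K}^2 = h_K^2 \|R\|_K^2 + h_K \|J\|_{\partial K}^2$ separately, using the inverse inequality and a polynomial trace inequality on each element, and to control $|\lamTf|$ uniformly over $\cF(\cT_0)$ via the min--max principle.

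First, for the interior residual $R = -\ddiv(\cA\nabla\uTf) + (c - \lamTf\beta_1)\uTf$ on $K$, the piecewise-constant assumption on $\cA$, $c$, $\beta_1$ together with $\uTf|_K \in P_p(K)$ makes $\ddiv(\cA\nabla\uTf)|_K = \cA{:}\nabla^2\uTf|_K$ a polynomial of degree at most $p-2$. The inverse inequality for polynomials on a shape-regular triangle yields
\[
  \|\ddiv(\cA\nabla\uTf)\|_K \leq C h_K^{-1}\|\cA\nabla\uTf\|_K \leq C\|\cA\|_\infty h_K^{-1}\|\nabla\uTf\|_K,
\]
with $C$ depending only on $p$ and the shape-regularity constant $C_\mathrm{s}$. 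Multiplying by $h_K$ and bounding the zeroth-order contribution by $h_K(\|c\|_\infty + |\lamTf|\,\|\beta_1\|_\infty)\|\uTf\|_K \leq h_0(\cdots)\|\uTf\|_K$, where $h_0 = \max_{K \in \cT_0} h_K$ is a constant depending only on the initial mesh, I obtain $h_K\|R\|_K \leq C_1\|\uTf\|_{H^1(K)}$.

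Second, for the jumps $J$: on every interior edge $\Gamma \subset \partial K$ shared with some $K' \in \cT$, the polynomial trace inequality $\|p\|_\Gamma \leq C h_{K'}^{-1/2}\|p\|_{K'}$, applied componentwise to $\cA\nabla\uTf$, together with local quasi-uniformity~\eqref{locquasiuni}, gives
\[
  h_K^{1/2}\|[\cA\nabla\uTf]_\Gamma \cdot \tn_\Gamma\|_\Gamma \leq C\|\cA\|_\infty\|\nabla\uTf\|_{\omK}.
\]
For Neumann edges $\Gamma \in \cETN$, the additional contributions $(\alpha - \lamTf\beta_2)\uTf|_\Gamma$ are treated analogously via $\|\uTf\|_\Gamma \leq C h_K^{-1/2}\|\uTf\|_K$, so that $h_K^{1/2}$ absorbs the factor $h_K^{-1/2}$. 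For Dirichlet edges $\Gamma \in \cETD$ there is nothing to bound. Summing over the at most three edges of $K$ yields $h_K^{1/2}\|J\|_{\partial K} \leq C_2\|\uTf\|_{H^1(\omK)}$.

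Third, to make the constant independent of the particular mesh $\cT \in \cF(\cT_0)$, I need a uniform upper bound on $|\lamTf|$. Since $\cT$ is obtained from $\cT_0$ by refinement, conformity in Assumption~(\Aslv) gives $V^{\cT_0}\subset\VT$; because $i\leq N_0 = \dim V^{\cT_0}$, the min--max principle implies $0 < \lamTf \leq \lambda_i^{\cT_0}$, which depends only on the initial mesh and the problem data. Combining the bounds on the two contributions then proves $\eta_{R,K}\leq C\|\uTf\|_{H^1(\omK)}$. The main obstacle is really only bookkeeping: tracking the dependencies of the constants on $\|\cA\|_\infty$, $\|c\|_\infty$, $\|\alpha\|_\infty$, $\|\beta_1\|_\infty$, $\|\beta_2\|_\infty$, on the shape-regularity and local quasi-uniformity constants, and on $h_0$ and $\lambda_i^{\cT_0}$, while ensuring that no isolated negative power of $h_K$ survives. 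The assumption of piecewise-constant coefficients is what allows the inverse inequality to be applied to $R$ without data oscillation terms.
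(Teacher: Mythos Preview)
Your proposal is correct and follows essentially the same approach as the paper: bound $h_K\|R\|_K$ via the inverse inequality on $\ddiv(\cA\nabla\uTf)$ (using that $\cA$ is piecewise constant), bound $h_K^{1/2}\|J\|_\Gamma$ edge by edge via the inverse trace inequality, and control $\lamTf$ uniformly by $\lambda_i^{\cT_0}$ through the min--max principle and the nesting $V^{\cT_0}\subset\VT$. The only cosmetic difference is that you make the role of $h_0=\max_{K\in\cT_0}h_K$ explicit when absorbing the zeroth-order term, whereas the paper silently folds this into the generic constant $C$.
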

\begin{proof}
Let $K \in\cT$ be fixed.
Since $\cA$ is constant on $K$ and $\uTf$ is a polynomial on $K$, we can use the inverse inequality and
derive the estimate
$$
  \| \ddiv(\cA \nabla \uTf) \|_K
  \leq \| \cA \|_{L^\infty(\Omega)} \| \Delta \uTf \|_K
  \leq C h_K^{-1} \| \nabla \uTf \|_K.
$$
This can be used to bound $h_K \| R \|_K$, see \eqref{eq:R}, as follows
\begin{multline}
\label{eq:Rest}
  h_K \| R \|_K
    \leq h_K \| \lamTf \beta_1 \uTf - c \uTf + \ddiv(\cA \nabla \uTf) \|_K
\\
    \leq h_K | \lamTf \beta_{1|K} - c_{|K} | \| \uTf \|_K + h_K \| \ddiv(\cA \nabla \uTf) \|_K
    \leq C \| \uTf \|_{H^1(K)},
\end{multline}
where we use the fact that $\lamTf$ is bounded from above by $\lambda^{\cT_0}_i$, which is determined on the initial mesh $\cT_0$ and is included in $C$.
To bound $\| J \|_{\partial K}$, see \eqref{eq:J}, we consider an edge $\Gamma \subset \partial K$ and distinguish three cases. If $\Gamma$ is an interior edge, then there exists an elements $K'$ such that $\Gamma = K \cap K'$ and
\begin{multline}
\label{eq:JestI}
  h_\Gamma^{1/2} \| J \|_\Gamma
    = h_\Gamma^{1/2} \| (\cA \nabla \uTf)|_K \cdot \tn_{K} + (\cA \nabla \uTf)|_{K'}\cdot \tn_{K'} \|_\Gamma
\\
    \leq C \left( \| \cA \nabla \uTf \|_K + \| \cA \nabla \uTf \|_{K'} \right)
    \leq C \| \cA \|_{L^\infty(\Omega)} \| \uTf \|_{H^1(K \cup K')},
\end{multline}
where we use the inverse trace inequality \eqref{eq:invtrace}.
Similarly, if $\Gamma \subset \GammaN$ then we use the same inverse trace inequality to derive estimate
\begin{multline}
\label{eq:JestN}
  h_\Gamma^{1/2} \| J \|_\Gamma
    = h_\Gamma^{1/2} \| \cA \nabla \uTf \cdot \tn_K + \alpha \uTf - \lamTf\beta_2 \uTf \|_\Gamma
\\
    \leq C \left( \| \cA \nabla \uTf \|_K + | \alpha_{|K} - \lamTf \beta_{2|K} | \| \uTf \|_K \right)
    \leq C \| \uTf \|_{H^1(K)},
\end{multline}
where we again bound $\lamTf$ by $\lambda^{\cT_0}_i$.
Finally, if $\Gamma \subset \GammaD$ then $J = 0$ and we finish the proof by combining \eqref{eq:Rest}, \eqref{eq:JestI}, and \eqref{eq:JestN}.
\end{proof}

\begin{lemma}[Stability of error indicators]\label{le:stability}
Let $\cT \in \cF(\cT_0)$ and $N_0 = \operatorname{dim} V^{\cT_0}$.
Let $\lamTf \in \R$ and $\uTf \in \VT$ with $1 \leq i \leq N_0$ be an approximate eigenpair given by \eqref{eq:discreigp} and
let $|\uTf|_b = 1$. Then
\begin{equation}
\label{eq:stabind}
  \eta_K \leq C \| \uTf \|_{H^1(\tomK)} \quad \forall K \in\cT
  \quad\text{and}\quad
  \eta_K \leq C_\eta,
\end{equation}
where $C$ and $C_\eta$ are uniform constants over the family of meshes
and $\tomK$ is given in \eqref{eq:tomK}.
\end{lemma}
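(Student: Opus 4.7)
The plan is to derive both inequalities in \eqref{eq:stabind} by combining results already proved in the paper, together with the min--max principle on the refinement sequence.

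The first bound follows by chaining Theorem~\ref{th:etaKetaRK} with Lemma~\ref{le:classtab}. Theorem~\ref{th:etaKetaRK} yields $\eta_K^2 \leq C \sum_{K' \in \cT(\omK)} \eta_{R,K'}^2$, and Lemma~\ref{le:classtab} gives $\eta_{R,K'} \leq C \|\uTf\|_{H^1(\omega_{K'})}$ for every $K' \in \cT(\omK)$. Since $K' \subset \overline{\omK}$ implies $\omega_{K'} \subset \tomK$ by the definition \eqref{eq:tomK}, and since shape regularity of the family $\cF$ uniformly bounds both $\#\cT(\omK)$ and the number of patches $\omega_{K'}$ covering any given point, summing gives $\sum_{K' \in \cT(\omK)} \|\uTf\|_{H^1(\omega_{K'})}^2 \leq C \|\uTf\|_{H^1(\tomK)}^2$, which establishes the first estimate in \eqref{eq:stabind}.

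For the uniform bound $\eta_K \leq C_\eta$, it suffices, by the first estimate, to bound $\|\uTf\|_{H^1(\Omega)}$ uniformly over $\cT \in \cF(\cT_0)$. Since every $\cT \in \cF(\cT_0)$ is a refinement of $\cT_0$, conformity gives $V^{\cT_0} \subset \VT$, so the Courant--Fischer min--max principle (cf.\ assumption (\Aslv)) implies $\lamTf \leq \lambda^{\cT_0}_i$ for all $1 \leq i \leq N_0$. Testing \eqref{eq:discreigp} with $\vT = \uTf$ and using $|\uTf|_b = 1$ gives $\|\uTf\|_a^2 = \lamTf b(\uTf,\uTf) = \lamTf \leq \lambda^{\cT_0}_i$. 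Since $a(\cdot,\cdot)$ is a scalar product on $V$, the hypotheses (a), (b), (c) from Section~\ref{se:problem}, combined with the uniform positive definiteness of $\cA$, guarantee the norm equivalence $\|\cdot\|_a \sim \|\cdot\|_{H^1(\Omega)}$ on $V$ through a standard Friedrichs--Poincar\'e inequality. Therefore $\|\uTf\|_{H^1(\Omega)} \leq C \sqrt{\lambda^{\cT_0}_i}$, and combining with the first estimate yields $\eta_K \leq C_\eta$ with $C_\eta := C \sqrt{\lambda^{\cT_0}_i}$ depending only on problem data and on $\cT_0$.

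The delicate part is the uniform bound, specifically the reliance on the equivalence $\|\cdot\|_a \sim \|\cdot\|_{H^1}$, which is not explicitly displayed in the paper but is implicit in the standing assumption that $a$ is a scalar product on $V$. This is the only place where the alternative conditions of Section~\ref{se:problem} are used, and once it is invoked the remainder is routine bookkeeping. In contrast, the first estimate is a direct algebraic consequence of the local-patch results already in hand.
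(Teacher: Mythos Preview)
Your proof is correct and follows essentially the same route as the paper: the first bound is obtained by chaining Theorem~\ref{th:etaKetaRK} with Lemma~\ref{le:classtab} (you simply spell out the patch-inclusion and overlap-counting details that the paper leaves implicit), and the uniform bound uses the min--max inequality $\lamTf \leq \lambda_i^{\cT_0}$, the identity $\|\uTf\|_a^2 = \lamTf$, and the equivalence of $\|\cdot\|_a$ with $\|\cdot\|_{H^1(\Omega)}$, exactly as in the paper's one-line argument.
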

\begin{proof}
The first statement follows immediately from Theorem~\ref{th:etaKetaRK} and Lemma~\ref{le:classtab}.
The second statment follows form the equivalence of the energy and $H^1$ norm, the fact that $\| \uTf \|_a^2 = \lamTf$, and from the bound $\lamTf \leq \lambda_i^{\cT_0}$:
$$
\| \uTf \|_{H^1(\tomK)} \leq C \| \uTf \|_a = C \sqrt{\lamTf} \leq C_\eta.
$$
\end{proof}

As in \cite{GarMor2011}, we define the set of normalized eigenfunctions corresponding to a given eigenvalue $\lambda \in \R$ as
$$
  \tilde E_\lambda = \{ u \in V :\quad a(u,v) = \lambda b(u,v) \quad \forall v \in V
  \quad\text{and}\quad
  | u |_b = 1 \}.
$$
This enables to formulate the analogous convergence theorem as \cite[Theorem~3.10]{GarMor2011}.
\begin{theorem}
\label{th:conv}
Let the adaptive algorithm be driven by error indicators \eqref{eq:etaK}, let it satisfies assumptions (\Aslv), (\Aest), (\Amrk), and (\Aref), and
let $\{ \lamTkf \}$ and $\{ \uTkf \}$ be the generated sequence of approximate eigenvalues and eigenfunctions for a fixed $i \in \{1,2,\dots, N_0\}$, $N_0 = \operatorname{dim} V^{\cT_0}$.
Then $\{ \lamTkf \}$ is non-increasing and
there exists an eigenvalue $\lambda \in \R$ such that
%
\begin{equation}
  \label{eq:convergence}
  \lim_{k\rightarrow\infty} \lamTkf = \lambda
  \quad\text{and}\quad
  \lim_{k\rightarrow\infty} \operatorname{dist}_{H^1(\Omega)} (\uTkf, \tilde E_\lambda) = 0.
\end{equation}
\end{theorem}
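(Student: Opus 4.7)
The plan is to verify the four structural hypotheses of the abstract adaptive convergence result \cite[Theorem~3.10]{GarMor2011} and invoke it. All the ingredients are either assumed in (\Aslv)--(\Aref) or established in the preceding lemmas of this section.

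First, I would establish monotonicity of $\{\lamTkf\}$. Since \REFINE\ only refines and preserves conformity, assumption (\Aref) implies $\VTk \subset V^{\cT_{k+1}}$ for every $k$. The min--max characterisation applied to the $i$-th eigenvalue (valid because $i \leq N_0 \leq \operatorname{dim} \VTk$) then yields
\begin{equation*}
  \lambda_i \leq \lambda_i^{\cT_{k+1}} \leq \lamTkf \leq \lambda_i^{\cT_0}.
\end{equation*}
Hence $\{\lamTkf\}$ is non-increasing and bounded below, so it converges to some $\lambda \geq \lambda_i$. What remains is to identify $\lambda$ as a true eigenvalue of \eqref{eq:eigp} and to prove $\operatorname{dist}_{H^1(\Omega)}(\uTkf, \tilde E_\lambda) \to 0$.

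For this, I would invoke \cite[Theorem~3.10]{GarMor2011}, whose four hypotheses are: (i) an upper bound for the residual of the form \eqref{eq:ubr}, (ii) stability of the indicators in the sense of \eqref{eq:stability}, (iii) a reasonable marking strategy, and (iv) a shape regular quasi-regular subdivision. Hypothesis (i) is supplied by the guaranteed upper bound \eqref{eq:gubr} (combined with the elementary bound $\|\nabla v\|_{\cA,K} \leq \|\cA^{1/2}\|_{L^\infty(K)}\|\nabla v\|_{\omK}$ to recast it in the form \eqref{eq:ubr}); hypothesis (ii) is Lemma~\ref{le:stability}; hypotheses (iii) and (iv) are precisely (\Amrk) and (\Aref).

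The step I expect to be the main obstacle is checking that the convergence machinery of \cite{GarMor2011}, originally phrased for the Dirichlet Laplacian, carries over to the present mixed-boundary symmetric elliptic setting. The underlying argument relies on boundedness of $\{\uTkf\}$ in $V$ (which here holds because $\|\uTkf\|_a^2 = \lamTkf \leq \lambda_i^{\cT_0}$), extraction of a weakly convergent subsequence $\uTkf \rightharpoonup \uoo$ in $V$, compactness of the embedding associated with $b$ to pass to the limit on the right-hand side, and the residual upper bound together with decay of $\max_K \eta_K^{\cTk}$ along a marked subsequence to conclude $\bR(\uTkf) \to 0$ in $V'$. In our setting, the required compactness is exactly what Lemma~\ref{le:compS} provides (via $\gamma_1$, $\gamma_2$), so passing to the limit in \eqref{eq:residual} yields $a(\uoo,v) = \lambda b(\uoo,v)$ for all $v \in V$, with the normalization $|\uoo|_b = 1$ inherited by compactness. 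Convergence of energy norms $\|\uTkf\|_a^2 = \lamTkf \to \lambda = \|\uoo\|_a^2$ then upgrades weak to strong $H^1$ convergence along the subsequence, and a standard subsequence--contradiction argument promotes this to convergence of the entire sequence to the eigenspace $\tilde E_\lambda$.
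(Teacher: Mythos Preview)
Your approach is the same as the paper's: invoke \cite[Theorem~3.10]{GarMor2011} after checking (\Aslv)--(\Aref). One point you gloss over but the paper singles out: the stability hypothesis \eqref{eq:stability} in \cite{GarMor2011} is formulated with the patch $\omK$, whereas Lemma~\ref{le:stability} only delivers $\eta_K \leq C\|\uTf\|_{H^1(\tomK)}$ with the wider patch $\tomK$. The paper's proof consists almost entirely of addressing this mismatch, observing that shape regularity still gives $|\tomK|\leq C h_K^d$, so the argument in \cite{GarMor2011} goes through unchanged with $\tomK$ in place of $\omK$. You should add a sentence to that effect rather than asserting that hypothesis (ii) is simply Lemma~\ref{le:stability}. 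Conversely, your discussion of why the framework of \cite{GarMor2011} (stated there for the Dirichlet Laplacian) carries over to the present mixed-boundary setting via the compactness in Lemma~\ref{le:compS} is more explicit than the paper, which cites the reference without further comment.
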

\begin{proof}
The only difference from \cite[Theorem~3.10]{GarMor2011} is in the stability of error indicators. The stability in \cite{GarMor2011} is considered with respect to the patch $\omK$, while in \eqref{eq:stabind} we naturally obtained the stability with respect to the wider patch $\tomK$. However, this makes no difference, because we still have $|\tomK| \leq C h^d_{K}$ due to the uniform boundedness of the number of elements in $\tomK$.
\end{proof}

A problem of Theorem~\ref{th:conv} is that $\lambda$ need not be necessarily $\lambda_i$ as we would expected.
In some quite pathological cases $\lambda$ can be an eigenvalue greater than $\lambda_i$. We will follow \cite{GarMor2011} and present \cite[Theorem~3.12]{GarMor2011} showing the convergence towards $\lambda_i$.
This result is based on the nondegeneracy assumption.
Problem \eqref{eq:eigp} satisfies the nondegeneracy assumption if all eigenfunctions $u_i \in V$ satisfying \eqref{eq:eigp} are such that $u_i|_{\mathcal{O}} \not\in P_p(\mathcal{O})$ for all nonempty open subsets $\mathcal{O}$ of $\Omega$, see \cite{GarMor2011}.
\begin{theorem}
\label{th:convi}
Let problem \eqref{eq:eigp} satisfy the nondegeneracy assumption. Then under the conditions of Theorem~\ref{th:conv} we have
\begin{equation}
  \label{eq:convergencei}
  \lim_{k\rightarrow\infty} \lamTkf = \lambda_i
  \quad\text{and}\quad
  \lim_{k\rightarrow\infty} \operatorname{dist}_{H^1(\Omega)} (\uTkf, \tilde E_{\lambda_i}) = 0.
\end{equation}
\end{theorem}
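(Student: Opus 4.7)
The plan is to follow the strategy of \cite[Theorem~3.12]{GarMor2011}. Theorem~\ref{th:conv} already gives convergence $\lamTkf \to \lambda$ to some eigenvalue $\lambda$ of \eqref{eq:eigp} together with $\operatorname{dist}_{H^1(\Omega)}(\uTkf, \tilde E_\lambda) \to 0$; the task is to identify $\lambda$ with $\lambda_i$ under the nondegeneracy assumption. Conformity from (\Aslv) combined with the min-max principle forces $\{\lamTkf\}$ to be non-increasing with $\lamTkf \geq \lambda_i$ for all $k$, hence $\lambda \geq \lambda_i$, so only the reverse inequality requires new work.

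First, I would decompose $\Omega$ into a stabilized part $\Omega^+$, namely the interior of the union of elements that are refined only finitely often, and its complement $\Omega^0$, on which the local mesh size tends to zero; this is the standard construction from \cite{Mor_Sieb_Vees_A_basic_res_conf_adap_FEs_08}. After passing to a subsequence, I would extract an $H^1$-weak limit $\uoo$ of $\uTkf$ lying in the limiting space $\Vinfty = \overline{\bigcup_k \VTk}$. The guaranteed upper bound \eqref{eq:gubr} for the residual, the stability result of Lemma~\ref{le:stability}, and the reasonability of \MARK~from (\Amrk) would then be combined to show that the maximum of the indicators over the marked set tends to zero; this is what lets me identify $\uoo$ as an exact eigenfunction of \eqref{eq:eigp} corresponding to the eigenvalue $\lambda$.

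The nondegeneracy assumption now enters decisively. On every element $K$ contained in $\Omega^+$ the mesh stabilizes, so $\uoo|_K$ is the $H^1$-limit of polynomials lying in the fixed finite-dimensional space $P_p(K)$, hence $\uoo|_K \in P_p(K)$. Applying nondegeneracy with $\mathcal{O}$ the interior of such a $K$ forces $\Omega^+$ to have empty interior, provided $\uoo \neq 0$; the latter is guaranteed by the compactness underlying \eqref{eq:blfb}, which lets the normalization $|\uTkf|_b = 1$ pass to the limit as $|\uoo|_b = 1$. Consequently the maximal mesh size of $\cT_k$ tends to zero everywhere and $\Vinfty = V$, so the classical conforming-FEM density argument for symmetric eigenvalue problems, applied for each $j=1,\dots,i$, yields $\lambda^{\cT_k}_j \downarrow \lambda_j$; in particular $\lambda = \lambda_i$, and the second part of \eqref{eq:convergencei} is then an immediate consequence of Theorem~\ref{th:conv}.

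The hardest step will be the rigorous identification of $\uoo$ as a true eigenfunction, since one must pass carefully to the limit in the residual bound \eqref{eq:ubr} while tracking the different behaviour of test functions on $\Omega^+$ versus $\Omega^0$. This is, however, precisely what the framework of \cite{GarMor2011} is designed to handle, and the uniform bound $\eta_K \leq C_\eta$ together with the patch-level stability $\eta_K \leq C \|\uTkf\|_{H^1(\tomK)}$ from Lemma~\ref{le:stability} supply everything their machinery needs to apply unchanged in our setting.
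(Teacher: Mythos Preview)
Your proposal is correct and takes essentially the same approach as the paper: both rely on \cite[Theorem~3.12]{GarMor2011}. The paper's own proof is simply a one-line citation of that result, whereas you have unpacked its main steps (the $\Omega^+/\Omega^0$ decomposition from \cite{Mor_Sieb_Vees_A_basic_res_conf_adap_FEs_08}, the identification of the weak limit as a true eigenfunction, and the use of nondegeneracy to force $\Omega^+$ to have empty interior so that $\Vinfty = V$).
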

\begin{proof}
See \cite[Theorem~3.12]{GarMor2011}.
\end{proof}

Finally, we are in the position to prove the convergence of ${\underline{\lambda}}_i^{\cT_k}$ and to show that
this value is a guaranteed lower bound on the corresponding exact eigenvalue $\lambda_i$ provided we have performed sufficiently many adaptive steps.
\begin{theorem}
\label{th:convlow}
If problem \eqref{eq:eigp} satisfies the nondegeneracy assumption and if conditions of Theorem~\ref{th:conv} hold, then ${\underline{\lambda}}_i^{\cT_k}$ defined by \eqref{eq:underlinelambda} and \eqref{eq:underlinelambdai} satisfy
\begin{equation}
  \label{eq:convergencelow}
  \lim_{k\rightarrow\infty} {\underline{\lambda}}_i^{\cT_k} = \lambda_i.
\end{equation}
Moreover, there exists $k_0 > 0$ such that
$$
  {\underline{\lambda}}_i^{\cT_k} \leq \lambda_i
  \quad \forall k \geq k_0.
$$
\end{theorem}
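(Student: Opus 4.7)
The plan is to combine the convergence from Theorem~\ref{th:convi} with a proof that the estimator $\eta^{\cT_k}$ decays to zero, and then to pass to the limit in the closed-form definitions \eqref{eq:underlinelambda} and \eqref{eq:underlinelambdai}. The ``eventually guaranteed'' lower bound property will then follow by checking that the closeness hypotheses of Theorems~\ref{th:lowerbound} and~\ref{th:lowerboundi} are satisfied for $k$ large enough.

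The central step is to show $\eta^{\cT_k}\to 0$. I would invoke the Efficiency of indicators corollary, which gives $\eta_K^{\cT_k}\le C\|w_k\|_{a,\tomK}$, where $w_k\in V$ is the residual representative \eqref{eq:defw} associated with $u_*=\uTkf$ and $\lambda_*=\lamTkf$. Squaring, summing over $K\in\cT_k$ and using the uniform bound on overlaps of the patches $\tomK$ (a consequence of shape regularity \eqref{shapereg}), I obtain $\eta^{\cT_k}\le C\|w_k\|_a$. To control $\|w_k\|_a$, Theorem~\ref{th:convi} supplies $u^k\in \tilde E_{\lambda_i}$ with $\|\uTkf-u^k\|_{H^1(\Omega)}\to 0$. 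Since $a(u^k,v)=\lambda_i\, b(u^k,v)$ for all $v\in V$, I rewrite the defining relation for $w_k$ as
\begin{equation*}
  a(w_k,v) = a(\uTkf-u^k,v) + (\lambda_i-\lamTkf)\, b(u^k,v) + \lamTkf\, b(u^k-\uTkf,v).
\end{equation*}
Testing with $v=w_k$, using continuity of $a$ and $b$, the abstract inequality \eqref{eq:absineq}, equivalence of the energy and $H^1$ norms, and $|u^k|_b=1$, and dividing by $\|w_k\|_a$, I reach $\|w_k\|_a \le C\bigl(\|\uTkf-u^k\|_a + |\lambda_i-\lamTkf|\bigr)$, with both terms vanishing in the limit by Theorem~\ref{th:convi}.

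With $\eta^{\cT_k}\to 0$ and $\lamTkf\to \lambda_i$ established, continuity of the expressions defining ${\underline\lambda}_i^{\cT_k}$ yields \eqref{eq:convergencelow}: for $i=1$ the formula $\tfrac14(-\eta^{\cT_k}+\sqrt{(\eta^{\cT_k})^2+4\lambda_1^{\cT_k}})^2$ converges to $\lambda_1$, and for $i\ge 2$ the formula $\lamTkf(1+\underline{\lambda}_1^{-1/2}\eta^{\cT_k})^{-1}$ converges to $\lambda_i$, where $\underline{\lambda}_1>0$ is a fixed lower bound on $\lambda_1$.

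To conclude the guaranteed bound, I verify the closeness hypotheses for $k\ge k_0$. The Galerkin conforming property gives $\lamTkf \ge \lambda_i$, whereas $\lamTkf\to\lambda_i$. Under the natural simplicity assumption $\lambda_i<\lambda_{i+1}$ implicit in the convergence to the specific eigenvalue $\lambda_i$ in Theorem~\ref{th:convi}, the harmonic mean $2(\lambda_i^{-1}+\lambda_{i+1}^{-1})^{-1}$ strictly exceeds $\lambda_i$, so \eqref{eq:closestcond} (for $i=1$) or \eqref{eq:closestcondi} (for $i\ge 2$) is satisfied for all $k$ beyond some $k_0$. Theorems~\ref{th:lowerbound} and~\ref{th:lowerboundi} then immediately give ${\underline\lambda}_i^{\cT_k}\le \lambda_i$ for every such $k$. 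I expect the main technical obstacle to be the clean estimate $\|w_k\|_a\to 0$: since $u^k$ genuinely depends on $k$, the key is that the eigenfunction identity for $u^k$ converts the defining equation for $w_k$ into quantities controlled by $\|\uTkf-u^k\|_a$ and $|\lambda_i-\lamTkf|$.
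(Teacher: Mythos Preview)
Your proof is correct and follows the same route as the paper: establish $\|w_k\|_a\to 0$ from Theorem~\ref{th:convi}, use the local efficiency corollary to get $\eta^{\cT_k}\to 0$, pass to the limit in the explicit formulas \eqref{eq:underlinelambda}--\eqref{eq:underlinelambdai}, and then invoke Theorems~\ref{th:lowerbound} and~\ref{th:lowerboundi} once the closeness condition eventually holds. Your treatment is in fact more explicit than the paper's brief sketch (you spell out the bounded-overlap summation giving $\eta^{\cT_k}\le C\|w_k\|_a$ and the estimate for $\|w_k\|_a$); the simplicity hypothesis $\lambda_i<\lambda_{i+1}$ that you introduce is not actually implied by Theorem~\ref{th:convi}, but the paper's argument tacitly relies on the same strict inequality for the harmonic-mean condition.
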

\begin{proof}
First, we prove the convergence of ${\underline{\lambda}}_i^{\cT_k}$ to $\lambda_i$.
Let $w^{\cT_k} \in V$ be defined by \eqref{eq:defw} with $\lambda_* = \lamTkf$ and $u_* = \uTkf$.
Due to \eqref{eq:convergencei}, we can easily show that $\| w^{\cT_k} \|_a \rightarrow 0$
and hence $w^{\cT_k} \rightarrow 0$ in $V$.
Consequently, the efficiency result \eqref{eq:effetaK} yields that $\eta^{\cT_k}_K$ tend to zero for all $K\in\cT_k$ and, thus, $\eta^{\cT_k}$ tends to zero as well.
Formulas \eqref{eq:underlinelambda} and \eqref{eq:underlinelambdai} together with \eqref{eq:convergencei}
now easily imply the convergence
${\underline{\lambda}}_i^{\cT_k} \rightarrow \lambda_i$ as $k\rightarrow\infty$.

Now, since $\lamTkf \rightarrow \lambda_i$, there exists $k_0 > 0$ such that the relative closeness assumption \eqref{eq:closestcond} for $i=1$ and \eqref{eq:closestcondi} for $i \geq 2$ is satisfied for all $k \geq k_0$.
Consequently, Theorems~\ref{th:lowerbound} and \ref{th:lowerboundi} guarantee that ${\underline{\lambda}}_i^{\cT_k} \leq \lambda_i$ for $k\geq k_0$.
\end{proof}

A distinctive feature of the error estimator $\eta$ given in \eqref{eq:etaK} is the possibility to use it in \eqref{eq:underlinelambda} or in \eqref{eq:underlinelambdai} and compute cheaply the lower  bound $\underline{\lambda}_i^\cT$ on $\lambda_i$. This is important for reliable stopping criteria.

For example, if the goal is to approximate the eigenvalue $\lambda_i$ within a prescribed relative error tolerance $\Ereltol$, then we stop the adaptive algorithm as soon as the estimate $\Erelest = \left(\lamTkf-\underline{\lambda}_i^{\cT_k} \right) / \underline{\lambda}_i^{\cT_k}$ of the true relative error $\Erel = (\lamTkf - \lambda_i) / \lambda_i$
is below the tolerance $\Ereltol$, i.e. as soon as
\begin{equation}
  \label{eq:relerr}
  \Erelest \leq \Ereltol.
\end{equation}
This inequality together with Theorem~\ref{th:convlow} yields $\Erel \leq \Erelest \leq \Ereltol$.
Thus, as soon as the algorithm succeeds to satisfy \eqref{eq:relerr}, then the true relative error $\Erel$ is really below the prescribed error tolerance.
Of course, this is guaranteed only if 
problem \eqref{eq:eigp} satisfies the nondegeneracy assumption and the number of adaptive steps $k$ is sufficiently large such that the relative closeness conditions \eqref{eq:closestcond} or \eqref{eq:closestcondi} hold.

The nondegeneracy assumption is not limiting in most cases and \cite[Lemma~3.13]{GarMor2011} recalls sufficient conditions for its validity.
A practical difficulty is the verification of the sufficient accuracy of the approximation $\lamTkf$
such that the relative closeness condition \eqref{eq:closestcond} or \eqref{eq:closestcondi} is satisfied.
Guaranteed verification of these conditions is not possible, unless guaranteed lower bounds of eigenvalues are known.
However, since the algorithm is proved to be convergent, we can have a good confidence that the relative closeness conditions hold as soon as the two-sided bounds of eigenvalues are resolved with sufficient accuracy. 
To be concrete, if $\underline{\lambda}_i^{\cT_\kfin}$ and $\underline{\lambda}_{i+1}^{\cT_\ellfin}$ denote the lower bounds computed in the final adaptive steps $\kfin$ and $\ellfin$, respectively, then we test the condition
\begin{equation}
  \label{eq:rctest}
  \lamTkf \leq 2 \left( \left(\underline{\lambda}_i^{\cT_{\kfin}}\right)^{-1} + \left(\underline{\lambda}_{i+1}^{\cT_{\ellfin}}\right)^{-1} \right)^{-1}
\end{equation}
for all previous adaptive steps $k=1,2,\dots,\kfin$. For those adaptive steps, where this test passes, 
we have a good confidence in the validity of the relative closeness condition.


\section{Numerical examples}
\label{se:numex}
We illustrate the numerical performance of the method by solving problem \eqref{eq:eigp_strong} 
in a dumbbell shaped domain 
$\Omega = (0,\pi)^2 \cup [\pi,4\pi/3]\times(\pi/3,2\pi/3) \cup (4\pi/3,7\pi/3)\times(0,\pi)$
with mixed boundary conditions. The chosen portions $\GammaD$ and $\GammaN$ of the boundary
$\partial\Omega$ are depicted in Figure~\ref{fi:dumbbell} (left).
Coefficients are constant with values $\beta_1 = \beta_2 = 1$, $c = \alpha = 0$, and 
$\cA$ being the identity matrix.

We compute the first ten eigenvalues of this problem by the standard linear finite element method,
i.e. we choose $p=1$ in \eqref{eq:VT}. Approximate eigenpairs are given 
by \eqref{eq:discreigp}. We use the adaptive algorithm described in Section~\ref{se:adaptalg}
and error indicators \eqref{eq:etaK} to steer it. The flux reconstruction is computed
on patches of elements by solving problems \eqref{min_prob_qa}. 
In every adaptive step, we compute
the lower bounds by using \eqref{eq:underlinelambda} for the first eigenvalue and 
\eqref{eq:underlinelambdai} for the subsequent eigenvalues.
We use the stopping criterion \eqref{eq:relerr} with $\Ereltol = 0.01$.

\begin{figure}
\tikzset{
  big arrowhead/.style={
    decoration={markings, mark=at position 1 with {\arrow[scale=1.5,thin,black]{>}} },
    postaction={decorate},
    shorten >=0.4pt}}
\begin{center}    
\begin{tikzpicture}[scale=0.86]
\draw [ultra thick] (0,0)--(3,0)--(3,1)--(4,1)--(4,0)--(7,0);
\draw [semithick] (7,0)--(7,3)--(4,3)--(4,2)--(3,2)--(3,3)--(0,3)--(0,0);
\node [below] at (5.5,0) {$\GammaD$};
\node [above] at (5.5,3) {$\GammaN$};
\draw [big arrowhead, thin] (0,2.5)--(3,2.5);
\draw [big arrowhead, thin] (3,2.5)--(0,2.5);
\node [below] at (1.5,2.5) {$\pi$};
\draw [big arrowhead, thin] (3,2.5)--(4,2.5);
\draw [big arrowhead, thin] (4,2.5)--(3,2.5);
\node [above] at (3.5,2.5) {$\displaystyle\frac{\pi}{3}$};
\draw [big arrowhead, thin] (4,2.5)--(7,2.5);
\draw [big arrowhead, thin] (7,2.5)--(4,2.5);
\node [below] at (5.5,2.5) {$\pi$};
\draw [big arrowhead, thin] (0.5,0)--(0.5,3);
\draw [big arrowhead, thin] (0.5,3)--(0.5,0);
\node [right] at (0.5,1.5) {$\pi$};
\draw [big arrowhead, thin] (3.5,1)--(3.5,2);
\draw [big arrowhead, thin] (3.5,2)--(3.5,1);
\node [right] at (3.5,1.5) {$\displaystyle\frac{\pi}{3}$};
\end{tikzpicture}
\quad
\raisebox{5mm}{\includegraphics[width=0.475\textwidth]{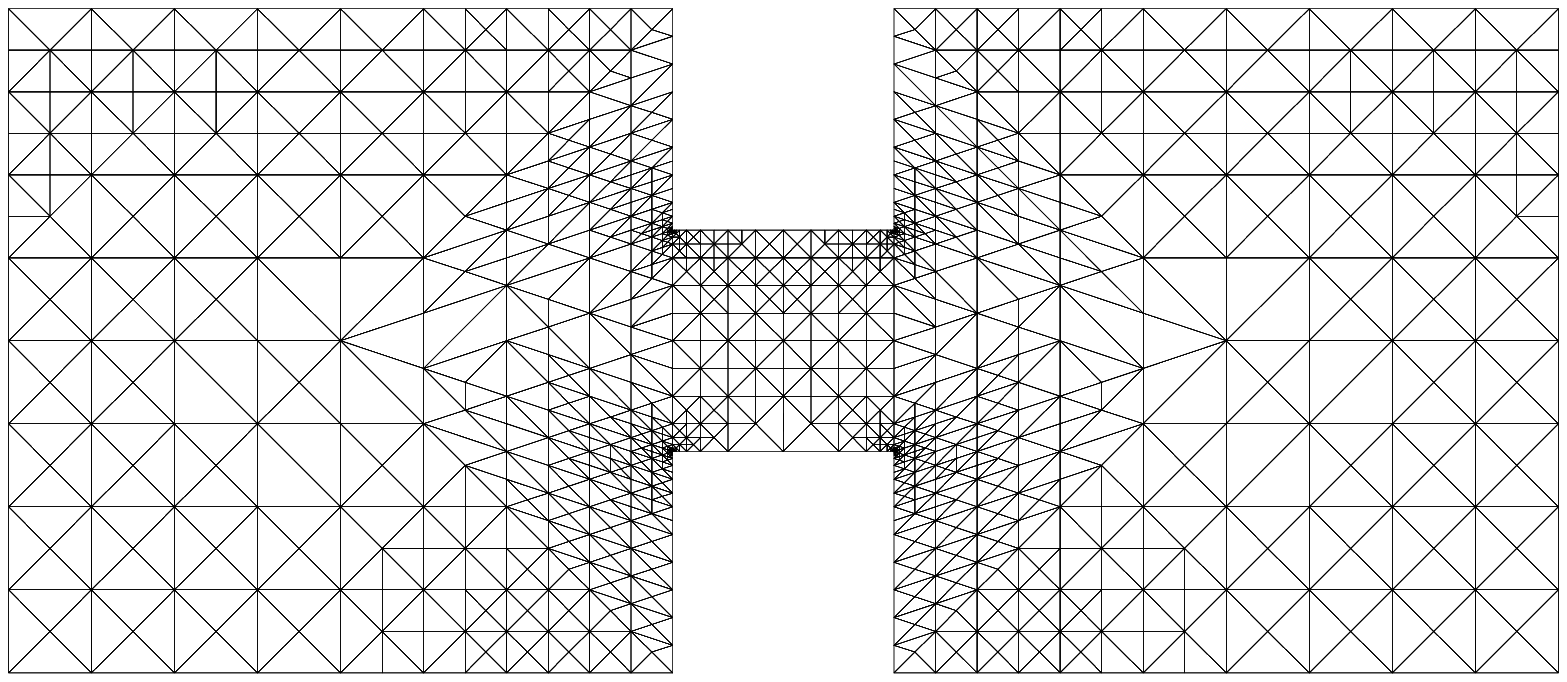}}
\end{center}
\caption{\label{fi:dumbbell}
The left panel shows the dimensions of the dumbbell shaped domain
and the Dirichlet (thick line) and Neumann (thin line) portions of its boundary.
The right panel presents the adaptively refined mesh in 
the 20th (out of 39) adaptive step of the computation of $\lambda_1$.}
\end{figure}

Table~\ref{ta:results} presents the lower and the upper bounds on eigenvalues
obtained in the final adaptive step. The corresponding numbers of degrees of freedom $N_\mathrm{DOF}$
and numbers of adaptive steps $N_\mathrm{AS}$ are included as well.
The table reveals pairs of closely clustered eigenvalues. 
Due to these tight clusters the eigenvalue problem in the dumbbell shaped domain is challenging to solve.
Moreover, most of the eigenfunctions have singularities at the re-entrant corners
of the domain.
The presented adaptive algorithm captures well theses singularities
and the meshes are automatically refined towards the re-entrant corners.
See Figure~\ref{fi:dumbbell} (right) for an illustration
of the adaptively refined mesh and Figure~\ref{fi:eigfun} for contour plots of the first two eigenfunctions.

\begin{table}
\begin{center}
\setlength{\tabcolsep}{4pt}
\begin{tabular}{c|cccc}
\hline
             &  lower  &  upper   & $N_\mathrm{DOF}$ & $N_\mathrm{AS}$ \\ 
\hline
   $\lambda_1$ &  0.1391 & 0.1405 &  20\,347 & 38 \\
   $\lambda_2$ &  0.1492 & 0.1507 &  24\,065 & 39 \\ 
   $\lambda_3$ &  0.4186 & 0.4226 & 101\,774 & 49 \\ 
   $\lambda_4$ &  0.4399 & 0.4443 & 137\,123 & 50 \\ 
   $\lambda_5$ &  0.8928 & 0.9011 & 343\,431 & 60 \\ 
\hline 
\end{tabular}
\quad
\begin{tabular}{c|cccc}
\hline
             &  lower  &  upper  & $N_\mathrm{DOF}$ & $N_\mathrm{AS}$ \\ 
\hline
   $\lambda_6$ & 0.8941  & 0.9025 &    318\,054 & 60 \\ 
   $\lambda_7$ & 1.1622  & 1.1731 &    562\,986 & 61 \\
   $\lambda_8$ & 1.1634  & 1.1745 &    575\,888 & 61 \\
   $\lambda_9$ & 1.2971  & 1.3100 &    809\,915 & 60 \\
$\lambda_{10}$ & 1.8212  & 1.8383 & 1\,180\,537 & 81 \\
\hline  
\end{tabular}
\end{center}
\caption{\label{ta:results}
The lower and upper bounds on the first ten eigenvalues
computed adaptively with the relative error tolerance $\Ereltol = 0.01$, see \eqref{eq:relerr}.
Columns $N_\mathrm{DOF}$ and $N_\mathrm{AS}$ 
present the final numbers of degrees of freedom and the numbers of adaptive steps performed.
}
\end{table}

\begin{figure}
\includegraphics[width=0.48\textwidth]{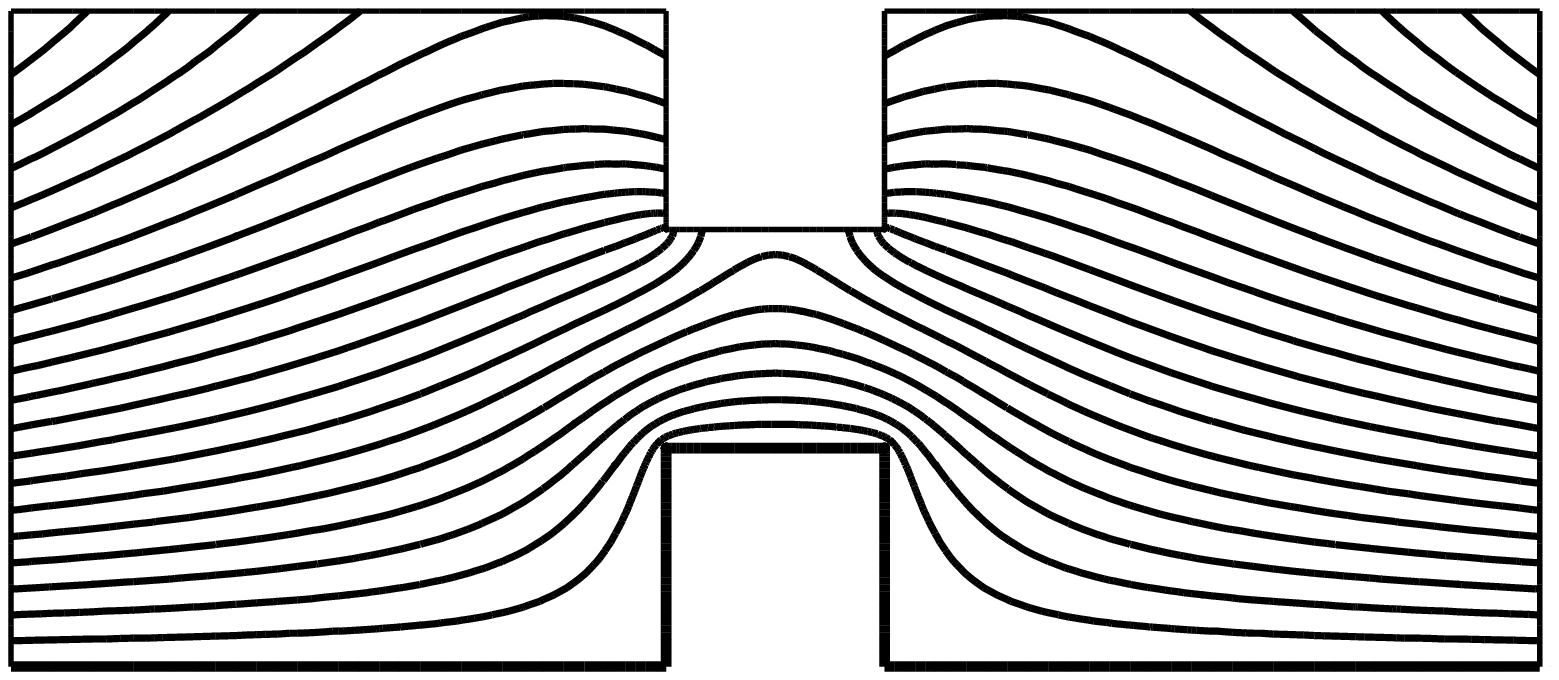} \quad
\includegraphics[width=0.48\textwidth]{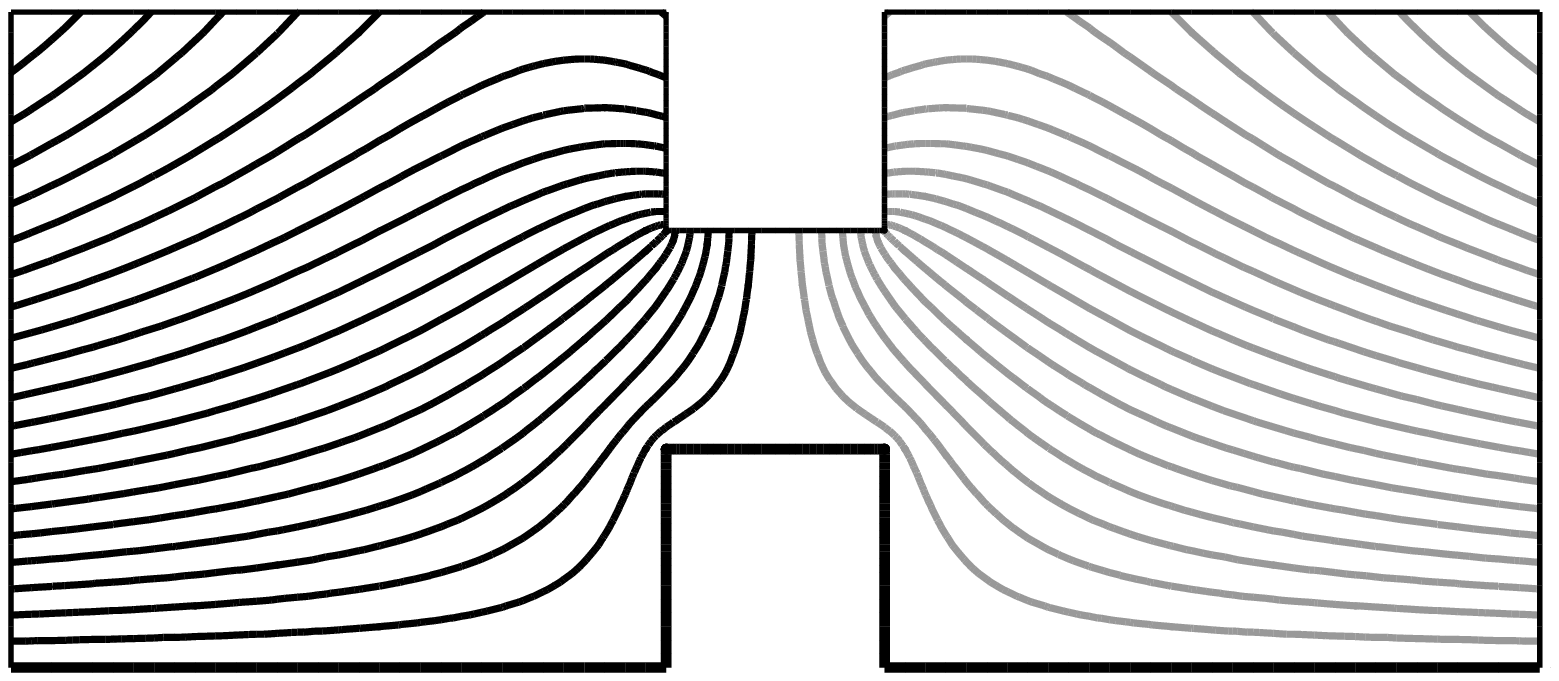}
\caption{\label{fi:eigfun}
Contour plot of eigenfunctions corresponding to the first (left) and to the second (right) eigenvalue.
Eigenfunctions are normalized to have the maximum equal to one. 
The black contour lines correspond to function values $1/20$, $2/20$, \dots, $19/20$ and the grey 
contour lines to $-1/20$, $-2/20$, \dots, $-19/20$.
}
\end{figure}

To present the adaptive process, we plot the evolution of the computed bounds 
for the first four pairs of eigenvalues in Figure~\ref{fi:convergence}. 
As expected, all upper bounds monotonically decrease, because the meshes are nested.
Interestingly, we also observe monotone increase of the lower bounds. 
This is a strong indication that the computed lower bounds are really below 
the true eigenvalues.

This confidence is even higher if the bounds pass the relative closeness test \eqref{eq:rctest}.
Using the lower and upper bound from the final adaptive step, we
perform the relative closeness test \eqref{eq:rctest} for all previous adaptive steps.
In Figure~\ref{fi:convergence}, we show in grey those data points that fail this test.
The points that pass it are plotted in black
and for them we have a good confidence that the relative
closeness assumption is satisfied and that the computed lower bounds 
are really below the exact eigenvalues. 
Notice that this is the case for all eight eigenvalues shown in Figure~\ref{fi:convergence}
except for $\lambda_5$ and $\lambda_7$.

For these two eigenvalues, the distance to the following eigenvalue is too small
to be resolved with the chosen tolerance $\Ereltol = 0.01$.
In the case of $\lambda_5$, the intervals defined by the computed two-sided bounds of $\lambda_5$ and $\lambda_6$ 
overlap and the approximation $\lambda_5^{\cT_k}$ does not pass
the relative closeness test \eqref{eq:rctest} even in the final adaptive step.
The same holds for $\lambda_7$.

This situation, however, does not imply that the relative closeness assumption 
is not satisfied or that the computed lower bound is not below the exact eigenvalue.
In fact, based on an extrapolation of computed eigenvalues, it seems that the 
resulting lower bounds are really below the exact eigenvalues
and that the relative closeness condition \eqref{eq:closestcondi} is satisfied for
the last adaptive steps even for $\lambda_5$ and $\lambda_7$.


\begin{figure}
\includegraphics[width=0.48\textwidth]{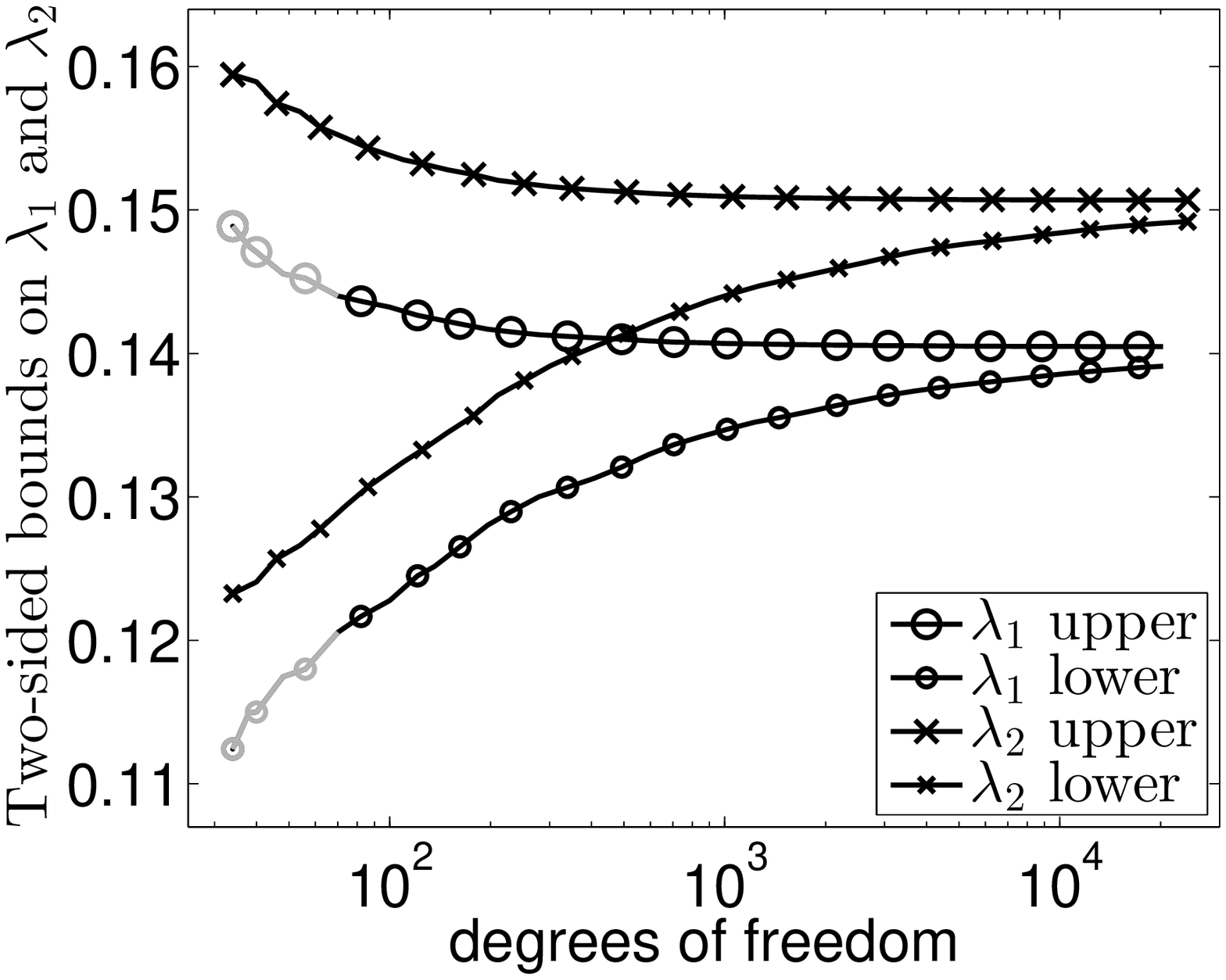}\quad%
\includegraphics[width=0.48\textwidth]{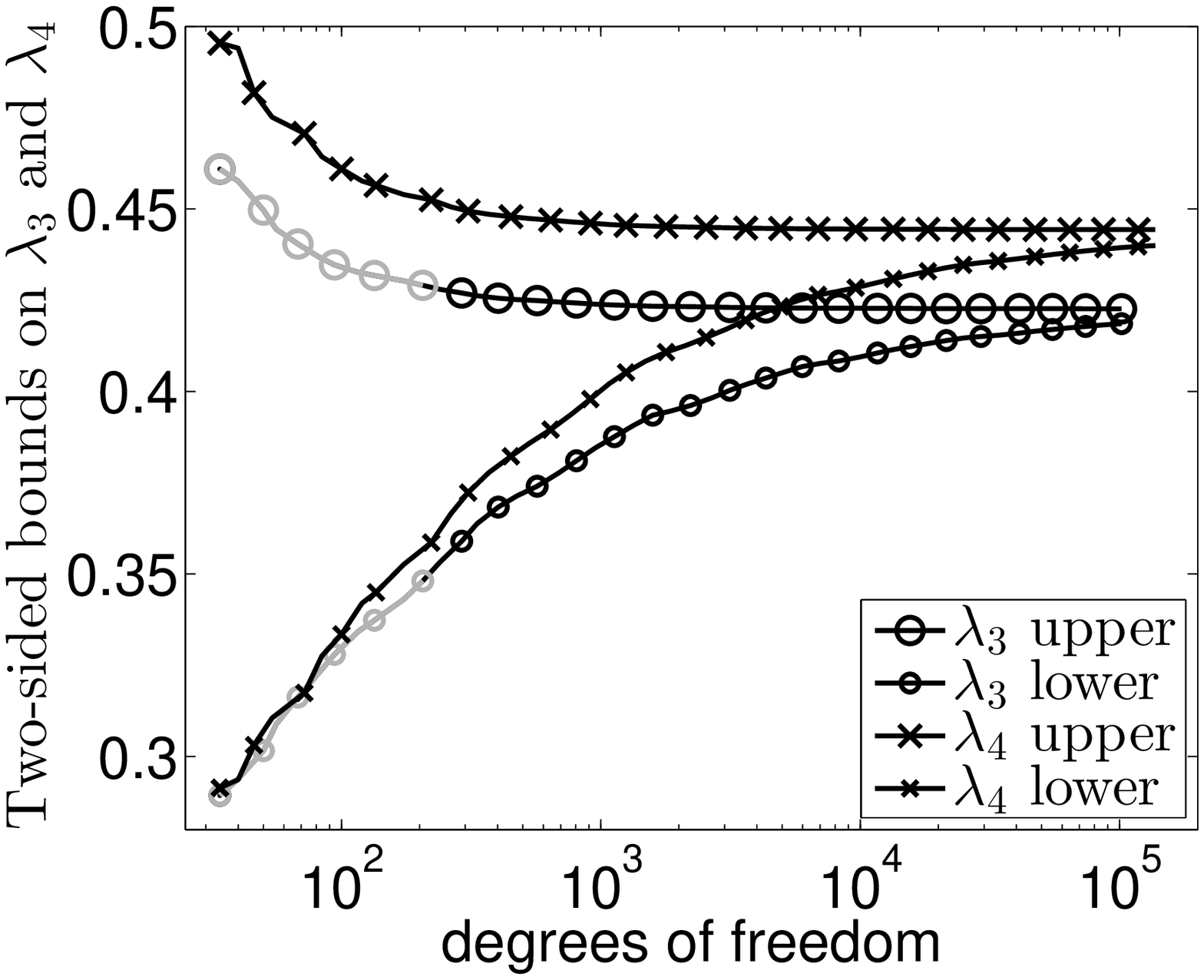}
\\
\includegraphics[width=0.48\textwidth]{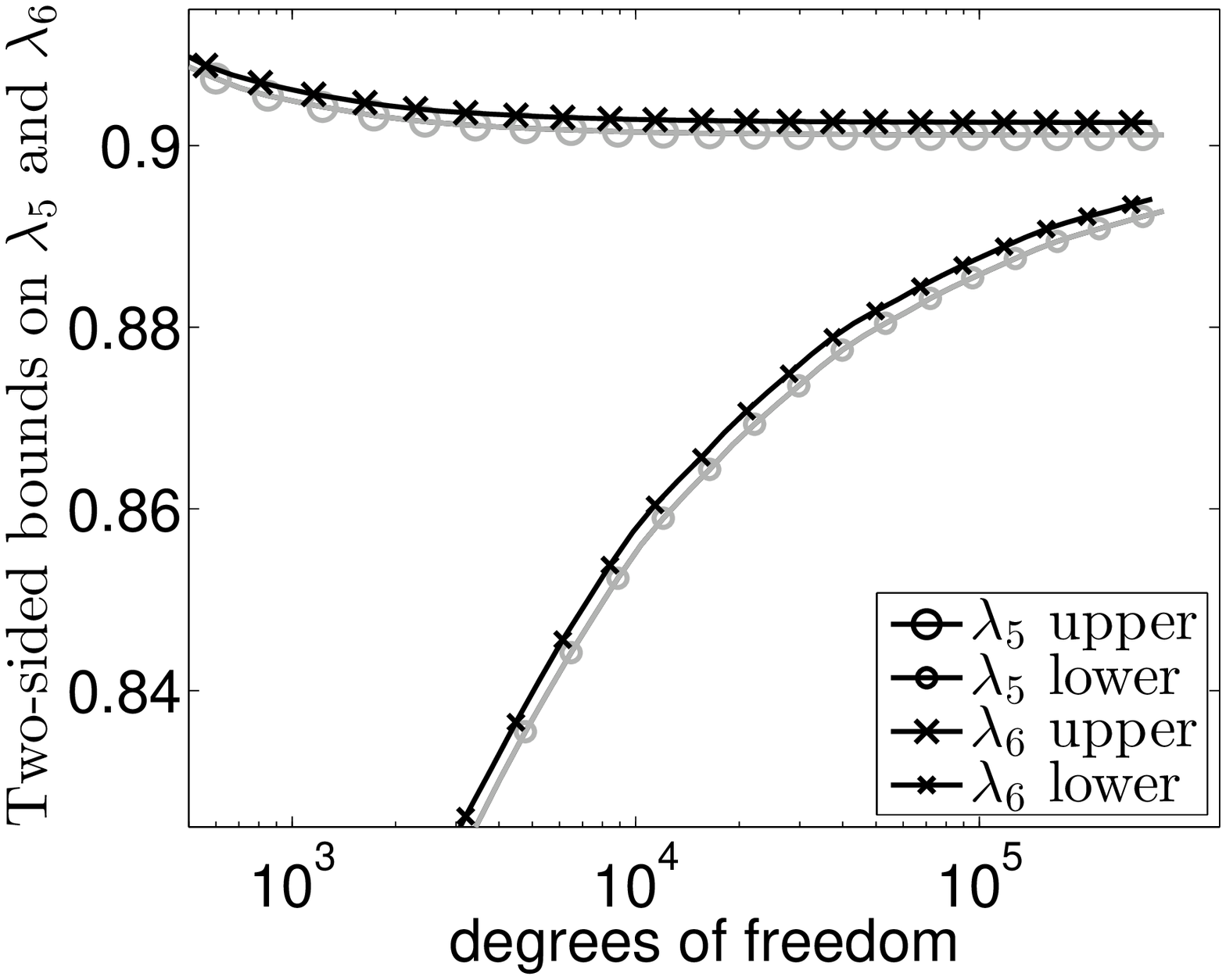}\quad %
\includegraphics[width=0.48\textwidth]{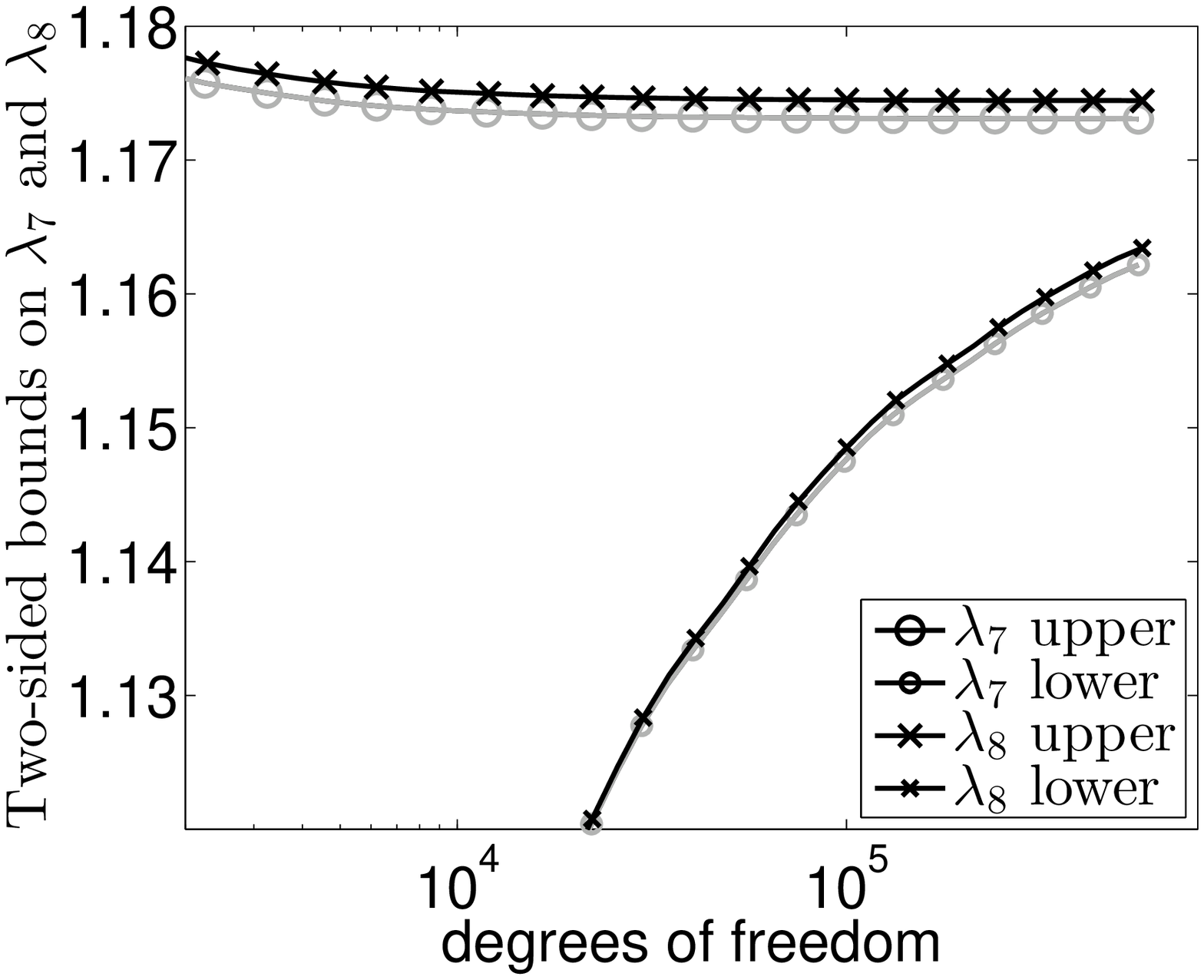}%
\caption{\label{fi:convergence}
Evolution of the lower and upper bounds during the adaptive process.
The bottom panels show only the last adaptive steps in order to visualize small
differences between the bounds. The markers correspond to odd numbers of adaptive steps.
}
\end{figure}

\section{Conclusions}
\label{se:concl}

In this paper, we define a general symmetric elliptic eigenvalue problem \eqref{eq:eigp_strong} and solve it by the standard conforming finite element method to obtain natural upper bounds on the exact eigenvalues. We propose to use complementarity based a posteriori error estimates to compute the corresponding lower bounds. We improve our previous result \cite{Seb_Vejch_2sidedb_eigen_Fr_Poin_trace_14} by reconstructing the flux locally on patches of elements using the reconstruction proposed in \cite{BraSch:2008}. The local flux reconstruction makes the method computationally efficient and since the local problems on patches are independent, they can be solved in parallel.

The main results are theoretical. First, we prove the local efficiency of the proposed error indicator by comparing it to the standard explicit residual error estimator. Second, we prove the convergence of the corresponding adaptive algorithm. To this end we utilize the results in 
\cite{GarMor2011} and verify that the proposed error indicator satisfies the required assumptions. 

The method guarantees lower bounds on the exact eigenvalues only if the relative closeness conditions \eqref{eq:closestcond} and \eqref{eq:closestcondi} are satisfied. These conditions are difficult to guarantee a priori. However, if we compute the two-sided bounds of eigenvalues with sufficient accuracy, we can retrospectively verify the validity of these conditions using \eqref{eq:rctest} and have a good confidence that the computed lower bounds are really below the true eigenvalues. Interestingly, in the performed numerical experiments, the method yielded lower bounds on the true eigenvalues even on rough meshes -- as far as we can judge from the finest two-sided bounds computed. In addition, the computed lower bounds monotonically increase during the adaptive process. All these facts further increase the confidence that the computed lower bounds are really below the exact eigenvalues.

The presented method is quite flexible and enables to compute two-sided bounds of eigenvalues for a wide range of symmetric elliptic eigenvalue problems. Upper bounds are computed by the standard finite element method and lower bounds by flux reconstructions of the finite element eigenfunctions. This reconstruction is efficient, because it is based on solving small problems on patches of elements.
The resulting lower bound is proved to be below the exact eigenvalue if the relative closeness condition holds. This condition cannot be guaranteed, but it can be tested and we can have a good confidence in its validity. We believe that these properties are quite favourable and make this method practical for real applications.




\bibliographystyle{siam}
\bibliography{bibl}

\end{document}